\let\euscr\mathscr
\newtheorem{thm}{Theorem}[section]
\newtheorem{lemma}[thm]{Lemma}
\newtheorem{corollary}[thm]{Corollary}
\newtheorem{rmk}[thm]{Remark}
\newtheorem{exmpl}[thm]{Example}
\newtheorem{defn}[thm]{Definition}
\begin{document}

\title[Quantum Steenrod squares and the equivariant pair-of-pants]{Quantum Steenrod squares and the equivariant pair-of-pants in symplectic cohomology}

\author{Nicholas Wilkins}
\address{Address on date of upload: School of Mathematics, University of Bristol, Bristol BS8 1UG, UK, and Heilbronn Institute for Mathematical Research, Bristol, UK.}
\email{Email on date of upload: nicholas.wilkins@bristol.ac.uk}
\date{version: June 4, 2019. Upload date: \today}

\begin{abstract}
		We relate the quantum Steenrod square to Seidel's equivariant pair-of-pants product for open convex symplectic manifolds that are either monotone or exact, using an equivariant version of the PSS isomorphism. We proceed similarly for $\mathbb{Z}/2$-equivariant symplectic cohomology, using an equivariant version of the continuation and $c^*$-maps. We prove a symplectic Cartan relation, pointing out the difficulties in stating it. We give a nonvanishing result for the equivariant pair-of-pants product for some elements of $SH^*(T^* S^n)$. We finish by calculating the symplectic square for the negative line bundles $M = \text{Tot}(\mathcal{O}(-1) \rightarrow \mathbb{CP}^m)$, proving an equivariant version of a result due to Ritter.
\end{abstract}

\maketitle

\section{Introduction}
Throughout, the term ``equivariant" will refer to $\mathbb{Z}/2$-equivariance. All cohomology will have $\mathbb{Z}/2$-coefficients. We will also interchangeably use Morse cohomology and singular cohomology in all of our constructions.

In this paper we discuss the relationship between the construction of the quantum Steenrod square in \cite{me} and the equivariant pair-of-pants product due to Seidel. They are both generalisations of the Steenrod square on a topological space $M$, which is an additive homomorphism $$Sq: H^n(M) \rightarrow H^{2n}_{eq}(M) := (H^{\bullet}(M)[[h]])^{2n},$$ where $h$ is a variable of degree $1$. The quantum Steenrod square was defined for closed monotone symplectic manifolds in \cite{me}, but a modified definition may be viable more generally any time quantum cohomology is well defined, i.e. weakly monotone symplectic manifolds. That being said, in this paper we restrict to the case when $M$ is either monotone or exact, and convex at infinity. The equivariant pair-of-pants product was defined for exact symplectomorphisms of Liouville domains by Seidel in \cite{seidel}, but here we use the identity symplectomorphism and a more general symplectic manifold, i.e. open convex symplectic manifolds as in Definition \ref{defn:convexity}. 

We begin with a fairly detailed preliminary section, which brings together the important background material that will be used. We will define the quantum cohomology in Section \ref{subsec:quantcupprod2}, recalling that for a general weakly monotone symplectic manifold $M$ the quantum cohomology $QH^*(M,\omega)= H^*(M, \Lambda)$ as a vector space, using a Novikov ring $\Lambda$. The cup product is deformed by quantum contributions from counting 3-pointed genus zero Gromov-Witten invariants, i.e. counting certain $J$-holomorphic spheres in $M$ where $J$ is an almost complex structure on $M$ compatible with $\omega$. We will denote this product $*$. We introduce in Section \ref{subsec:SqQviaMorse2} the quantum Steenrod square, which is an operation $$Q\mathcal{S} : QH^{*}(M) = H^{*}(M)[t] \rightarrow H^{*}(M)[t][[h]] = QH^{*}(M)[[h]].$$ Throughout we will denote $QH^*_{eq}(M) := (QH^{\bullet}(M)[[h]])^*$, to adhere to the notation of \cite{seidel}. The quantum Steenrod square differs from the classical Steenrod square because it is not a ring morphism: specifically, the obvious analogue of the Cartan relation \begin{equation} \label{equation:classcartan} Sq(x \cup y) = Sq(x) \cup Sq(y) \end{equation} does not hold. However, there is a correction term that can be computed, yielding a quantum Cartan relation $$Q \mathcal{S}(x*y)=Q \mathcal{S}(x) * Q \mathcal{S}(y) + \sum_{i,j} q_{i,j}(W_0 \times D^{i,+})(x,y)h^i,$$ for a correction term $q_{i,j}$ as in \cite[Theorem 1.2]{me}. We will also recall in Section \ref{subsec:equivHF} the notion of equivariant Floer cohomology $HF^*_{eq}(H)$ for a Hamiltonian $H: M \rightarrow \mathbb{R}$, from \cite{seidel}, which is a way of generalising the $\mathbb{Z}/2$-equivariant cohomology to Floer theory. In the same paper Seidel defined a generalisation of the Steenrod square, which throughout we call the symplectic square: $$P \mathcal{S}: HF^*(H) \rightarrow HF^*_{eq}(2 \cdot H).$$ We describe this in Section \ref{subsec:eqpop}. In Section \ref{subsec:equivariantgluing} we give a sketch of how the standard gluing and compactness arguments generalise to the equivariant case.

In Section \ref{subsec:equivariantcontinuationmaps}, we will define the equivariant continuation maps $\phi_{eq,H,H'}$, which are homomorphisms $HF^*_{eq}(2 \cdot H) \rightarrow HF^*_{eq}(2 \cdot H')$. Our convex symplectic manifolds $M$ are split into a compact symplectic manifold $C$, with $\dim C = \dim M$ such that $\partial C$ is contact, and a collar neighbourhood symplectomorphic to $\partial C \times [1,\infty)$. To be radial at infinity is to say that there is some $R \ge 1$ such that $H(z,r) = \lambda_H \cdot r$ for $(z,r) \in \partial C \times [1,\infty)$ and $r \ge R$. The equivariant continuation maps are defined when $H, H'$ are radial at infinity with $\lambda_H \le \lambda_{H'}$.

We recall in Section \ref{subsec:pssisomprelim} the ring isomorphism due to Piunikhin-Salamon-Schwarz \cite{PSS}, denoted $\Psi: QH^{*}(M) \xrightarrow{\cong} HF^{*}(H)$ is a homomorphism from quantum cohomology to Floer cohomology. The PSS isomorphism $\Psi$ is defined for a small Hamiltonian $H$, where ``small" will be made precise in the referenced section. In Section \ref{sec:eqpssisom} we will construct an equivariant version of the PSS-isomorphism, $$\Psi_{eq}: QH^*_{eq}(M) \rightarrow HF^*_{eq}(2 \cdot H),$$ where $2 \cdot H$ is ``small". In Section \ref{sec:qsandeqpopclosed} we will prove the main result of this paper:
\begin{thm}
\label{thm:qsseqpopintertwine} Let $M$ be a convex symplectic manifold, and $H: M \rightarrow \mathbb{R}$ a $C^2$-small Hamiltonian. Then the following diagram commutes:

\begin{equation}\label{equivpssisom}
\xymatrix{
QH^*(M)
\ar@{->}^-{Q\mathcal{S}}[r]
\ar@{->}^-{\cong}_-{\Psi}[d]
&
QH^*_{eq}(M)
\ar@{->}^-{\cong}_-{\Psi_{eq}}[d]
\\ 
HF^*(H)
\ar@{->}^-{P \mathcal{S}}[r]
&
HF^*_{eq}(2 \cdot H)
}
\end{equation}

\end{thm}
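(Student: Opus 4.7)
The plan is to exhibit both compositions $\Psi_{eq} \circ Q\mathcal{S}$ and $P\mathcal{S} \circ \Psi$ as counts of endpoints of a single one-parameter family of $\mathbb{Z}/2$-equivariant moduli spaces, so that the chain-level difference is a boundary. After unpacking definitions, both sides are $\mathbb{Z}/2$-equivariant counts of maps from essentially the same topological surface (a twice-punctured sphere with an involution swapping the punctures and carrying PSS-type caps) into $M$, differing only in where the ``quantum/spherical" and ``Floer" regimes sit in the surface and the corresponding choice of perturbation datum. My goal is to build a cobordism of equivariant Floer data between the two choices.

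First, I would unpack each composition at the chain level. The composition $\Psi_{eq}\circ Q\mathcal{S}$ counts an $S^\infty$-parameterized sphere with two input marked points (carrying the Steenrod involution) producing an output in quantum cohomology, which is then capped by an equivariant PSS plane landing on a Hamiltonian orbit. Conversely, $P\mathcal{S}\circ\Psi$ counts a configuration whose two input quantum cohomology classes are first converted to Hamiltonian orbits via two PSS caps and then fed into Seidel's equivariant pair-of-pants. In both cases the domain is topologically a twice-punctured $\mathbb{CP}^1$ with one output cylindrical end, and carries a free $\mathbb{Z}/2$-action after incorporating the $S^\infty$-parameter via the Borel construction.

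Next, I would interpolate between the two choices of data along a length parameter $R\in[0,\infty]$ that controls the size of the neck separating the ``quantum Steenrod sphere" from the ``PSS caps" on the one hand, or dually the size of the neck inside Seidel's equivariant pair-of-pants on the other. Standard neck-stretching of the type used in the proof that PSS intertwines the quantum and pair-of-pants products (cf. \cite{PSS}) then identifies the two boundary strata at $R=0$ and $R=\infty$ with the two compositions. Throughout one carries along the $S^\infty$-family, insisting that all interpolations be $\mathbb{Z}/2$-equivariant; for this I would invoke the equivariant transversality, compactness and gluing package sketched in Section \ref{subsec:equivariantgluing}. The counts of $0$-dimensional interior strata assemble into a chain homotopy between the two compositions, which suffices since the $\mathbb{Z}/2$-coefficient sign bookkeeping is trivial.

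The main obstacle will be the equivariant gluing near $R=0$, where the Floer cylinder collapses onto a sphere carrying the Steenrod involution: one has to calibrate the gluing parameter with the $S^\infty$-family so that equivariance of the perturbation data is preserved and so that the PSS caps glue consistently with the $\mathbb{Z}/2$-symmetry. A second technical point is uniform control at infinity: since $M$ is only convex at infinity, I must ensure no solutions along the family escape the compact part, which forces the radial-Hamiltonian conventions and slope constraints of Section \ref{subsec:equivariantcontinuationmaps} and a maximum-principle argument applied uniformly in the family parameter. Once these equivariant analytic steps are in place, the remainder of the argument reduces to verifying that the degenerations at $R=0$ and $R=\infty$ reproduce exactly the definitions of $\Psi_{eq}\circ Q\mathcal{S}$ and $P\mathcal{S}\circ\Psi$, which is a direct comparison of moduli problems.
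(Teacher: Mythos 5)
Your proposal is close in spirit to the paper's proof, but it glosses over the step that actually carries the weight, and one of the technical worries you flag is not a real obstacle. Let me compare the two.

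The paper does not directly interpolate between $\Psi_{eq}\circ Q\mathcal{S}$ and $P\mathcal{S}\circ\Psi$; instead it explicitly constructs $\Psi_{eq}^{-1}$ (via reversed $i$-equivariant spiked discs, Lemma \ref{lemma:inverseeqpsi}) and computes $\Psi_{eq}^{-1}\circ P\mathcal{S}\circ\Psi$. That composition is, by equivariant gluing applied to the broken configurations of Figure \ref{fig:2Seidel}, a count on a single glued surface $\tilde S$ (a three-ended thrice-punctured sphere with Morse half-lines at two ends and a Morse half-line at the third, not ``twice-punctured''). The chain-level identity $\Psi_{eq}^{-1}\circ P\mathcal{S}\circ\Psi=Q\mathcal{S}$ is then established in Lemma \ref{lemma:lemPQQS} by \emph{homotoping the data} on $\tilde S$ to the Morse-theoretic data defining $Q\mathcal{S}$; your one-parameter family is essentially this data-homotopy, but your framing as a ``neck-stretching'' between two distinct \emph{surfaces} is not how the argument goes; only one gluing is applied, and the remaining interpolation lives in the space of equivariant Floer/Morse data on a \emph{fixed} glued domain.

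The step you underweight is the equivariant transversality of that homotopy. In Lemma \ref{lemma:lemPQQS} one needs a homotopy $f^2_{v,s,\eta}$ of perturbed Morse functions for the two outgoing half-lines, subject to the constraint that the homotopy used for the upper leg equals the image under $v\mapsto -v$ of that used for the lower leg. It is not automatic that a generic allowable homotopy for one leg remains allowable for the other after this involution; the paper resolves this by working in the intersection of two Baire sets $\mathcal{B}_2\cap\iota(\mathcal{B}_3)$. Without identifying this constraint your ``invoke the equivariant transversality package'' is wishful. You also omit the need to match the $\mathbb{Z}/2$-actions on the two sides, which the paper argues via triple transitivity of $PSL(2,\mathbb{C})$ (Remark \ref{rmk:gluingaction}): the covering involution $\gamma$ on the pair-of-pants must induce, after gluing, the same involution $z\mapsto 1/z$ that defines $Q\mathcal{S}$ on the glued sphere, and this is a genuine consistency check rather than a formality.

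Finally, your concern about uniform control at infinity and a family maximum principle is misplaced for this theorem. Theorem \ref{thm:qsseqpopintertwine} is for a single $C^2$-small Hamiltonian $H$; the radial-slope constraints and the maximum-principle estimates are needed for the \emph{continuation maps} and for $SH^*_{eq}$ (Section \ref{sec:qsandeqpopopen}, Corollary \ref{corollary:qsseqpopintertwine2}), not here, where all asymptotics are to constant loops and all solutions are a priori confined by the $C^2$-smallness of $H$.
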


In Section \ref{sec:qsandeqpopopen}, we will use the continuation maps from Section \ref{subsec:equivariantcontinuationmaps} to define the equivariant symplectic cohomology $SH_{eq}^*(M)$ as the direct limit of $HF_{eq}^*(2 \cdot H)$ over all $H$ that are radial at infinity, using the maps $\phi_{eq,H,H'}$. Composing the equivariant PSS isomorphism $QH_{eq}^*(M) \xrightarrow{\cong} HF_{eq}^*(2 \cdot H)$ for a $C^2$-small Morse function $H$ with the map $HF_{eq}^*(2 \cdot H) \rightarrow SH_{eq}^*(M)$ to the direct limit, there is a map  $c_{eq}^* : QH_{eq}^*(M) \rightarrow SH_{eq}^*(M)$. We observe that the symplectic square commutes with the equivariant continuation maps:
\begin{lemma}
\label{lemma:pairofpantsandcontin}
$$\phi_{eq,H,H'} \circ P \mathcal{S} = P \mathcal{S} \circ \phi_{H,H'}.$$
\end{lemma}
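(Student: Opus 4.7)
The approach I would take is the standard parametrised-moduli-space argument: construct a one-parameter family of $\mathbb{Z}/2$-equivariant Floer data on a pair-of-pants surface, interpolating between data that realises $\phi_{eq,H,H'} \circ P\mathcal{S}$ and data that realises $P\mathcal{S} \circ \phi_{H,H'}$; the corresponding chain-level moduli space of codimension one then produces a chain homotopy whose boundary identifies the two compositions, giving equality on cohomology.

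First, I would re-express both compositions as counts on a single pair-of-pants $\Sigma$. After the usual neck-stretching/gluing argument, $\phi_{eq,H,H'} \circ P\mathcal{S}$ is computed by the $S^\infty$-parametrised count of pair-of-pants maps with Hamiltonian $H$ on the two input cylindrical ends and a monotone homotopy $2H \to 2H'$ inserted along the output end, while $P\mathcal{S} \circ \phi_{H,H'}$ is computed by the $S^\infty$-parametrised count of pair-of-pants maps with a ($\mathbb{Z}/2$-symmetric) monotone homotopy $H \to H'$ on the two input ends and asymptotic $2H'$ on the output. The $S^\infty$ parameter spaces on either side of the composition can be combined via $S^\infty\times S^\infty\simeq S^\infty$ with the diagonal $\mathbb{Z}/2$-action.

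Next, I would choose a path $s\in[0,1]$ of Hamiltonian data $K_s$ on $\Sigma$ that is $\mathbb{Z}/2$-invariant under the involution swapping the two input ends, has asymptotics $H,H,2H'$ throughout, and specialises at $s=0$ to the data defining $\phi_{eq,H,H'}\circ P\mathcal{S}$ and at $s=1$ to the data defining $P\mathcal{S}\circ\phi_{H,H'}$. Together with an $S^\infty$-family of auxiliary choices (cut down against a Morse function on $S^\infty$ in the usual way), this gives a $\mathbb{Z}/2$-equivariantly parametrised $1$-dimensional moduli space whose signed count defines the desired chain homotopy.

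The principal technical obstacle will be to maintain simultaneously, along the entire path $s\in[0,1]$: (i) transversality of the parametrised moduli problem in the presence of the $\mathbb{Z}/2$-action, which requires generic perturbation away from the fixed locus and appeals to the equivariant gluing/transversality package set up in Section \ref{subsec:equivariantgluing}; (ii) compactness, which requires that $K_s$ remains a monotone Floer datum at infinity for every $s$ so that the maximum principle on the convex cylindrical end precludes escape to infinity; and (iii) strict $\mathbb{Z}/2$-equivariance of $K_s$, which is guaranteed by averaging over the involution. Once these are in place, the codimension-one boundary of the parametrised moduli space has the standard three types of contributions---breaking at $s=0$, breaking at $s=1$, and interior Floer breaking---and standard sign/orientation bookkeeping in the $\mathbb{Z}/2$-equivariant setting then yields $\phi_{eq,H,H'} \circ P \mathcal{S} = P \mathcal{S} \circ \phi_{H,H'}$ on cohomology.
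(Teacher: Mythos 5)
Your proposal is correct and follows essentially the same route as the paper: glue the continuation cylinders onto the pair-of-pants on both sides (left end for $\phi_{eq,H,H'}\circ P\mathcal{S}$, the two right ends for $P\mathcal{S}\circ\phi_{H,H'}$), then identify the two resulting parametrised counts via an interpolating $\mathbb{Z}/2$-equivariant homotopy of the Floer data, which is exactly the content of Lemma \ref{lemma:eqcontinmaps}~(1). The paper additionally remarks that for a judicious choice of $J$ and gluing parameter this interpolation may be taken constant, and it handles the $S^\infty$ re-indexing across the glued Morse flowlines by appeal to Remark \ref{rmk:shiftandnegativeaction} rather than by the $S^\infty\times S^\infty\simeq S^\infty$ observation, but these are presentational rather than substantive differences.
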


This lemma allows us to define a symplectic square $P \mathcal{S}:SH^*(M) \rightarrow SH^*_{eq}(M)$, and from the previous results obtain the following corollary:

\begin{corollary}
\label{corollary:qsseqpopintertwine2}
The following diagram commutes:

\begin{equation}\label{equivcstar}
\xymatrix{
QH^*(M)
\ar@{->}^-{Q\mathcal{S}}[r]
\ar@{->}_-{c^*}[d]
&
QH^*_{eq}(M)
\ar@{->}_-{c^*_{eq}}[d]
\\ 
SH^*(M)
\ar@{->}^-{P \mathcal{S}}[r]
&
SH^*_{eq}(M)
}
\end{equation}

\end{corollary}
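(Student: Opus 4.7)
The plan is to enlarge the square of Corollary \ref{corollary:qsseqpopintertwine2} by inserting an intermediate row at the Floer cohomology level for a chosen $C^2$-small Morse Hamiltonian $H$, thereby splitting the diagram into two sub-squares whose commutativity has essentially already been established.

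First I would fix such an $H$ and write $c^* = \iota_H \circ \Psi$ and $c^*_{eq} = \iota^{eq}_H \circ \Psi_{eq}$, where $\iota_H : HF^*(H) \to SH^*(M)$ and $\iota^{eq}_H : HF^*_{eq}(2\cdot H) \to SH^*_{eq}(M)$ are the canonical maps to the respective direct limits (this is the definition of $c^*$ and $c^*_{eq}$ given in Section \ref{sec:qsandeqpopopen} and just before the corollary). Stacking the diagram
\[
\xymatrix{
QH^*(M) \ar[r]^-{Q\mathcal{S}} \ar[d]_-{\Psi} & QH^*_{eq}(M) \ar[d]^-{\Psi_{eq}} \\
HF^*(H) \ar[r]^-{P\mathcal{S}} \ar[d]_-{\iota_H} & HF^*_{eq}(2\cdot H) \ar[d]^-{\iota^{eq}_H} \\
SH^*(M) \ar[r]^-{P\mathcal{S}} & SH^*_{eq}(M)
}
\]
the upper sub-square is exactly the content of Theorem \ref{thm:qsseqpopintertwine}, and therefore commutes.

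Next I would verify that the lower sub-square commutes. This is where Lemma \ref{lemma:pairofpantsandcontin} does the work: the identity $\phi_{eq,H,H'}\circ P\mathcal{S} = P\mathcal{S} \circ \phi_{H,H'}$ says that the Floer-level symplectic squares $P\mathcal{S} : HF^*(H) \to HF^*_{eq}(2\cdot H)$ assemble into a morphism of directed systems (indexed by Hamiltonians radial at infinity, ordered by slope). The symplectic square $P\mathcal{S} : SH^*(M) \to SH^*_{eq}(M)$ is defined precisely as the induced map on direct limits; by the universal property of the colimit, it satisfies $\iota^{eq}_H \circ P\mathcal{S} = P\mathcal{S} \circ \iota_H$ for every admissible $H$, in particular for our $C^2$-small one. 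Pasting the two commuting sub-squares gives the outer square of the corollary.

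The proof is thus essentially a diagram chase, and there is no serious analytic obstacle beyond what has already been established. The only point requiring a little care is to check that the direct limit defining $P\mathcal{S}$ on $SH^*$ is independent of the choice of cofinal family of Hamiltonians used to model $SH^*$ and $SH^*_{eq}$, so that the small Morse function $H$ entering via the PSS isomorphism is a legitimate index in the common directed system; this is immediate from the standard cofinality arguments for symplectic cohomology applied in parallel to the equivariant construction of Section \ref{subsec:equivariantcontinuationmaps}.
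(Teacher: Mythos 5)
Your proof is correct and follows essentially the same route as the paper: the paper's one-line proof (``Use Theorem \ref{thm:qsseqpopintertwine} and descend to the direct limit'') is exactly what you have spelled out, with the upper sub-square being Theorem \ref{thm:qsseqpopintertwine} and the lower sub-square being the direct-limit compatibility furnished by Lemma \ref{lemma:pairofpantsandcontin}. No discrepancies.
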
 

In Section \ref{sec:symplcartan} we discuss an attempt to construct a symplectic version of the Cartan relation. At the beginning of the referenced section we see that an immediate generalisation of the Cartan relation does not make sense because there is no obvious pair-of-pants type product on $SH^*_{eq}(M)$. This is because the pair-of-pants is too rigid to have a holomorphic involution that rotates each of the three cylindrical ends halfway. This is disappointing because the classical version of the Cartan relation is a very useful computational tool, as is the quantum Cartan relation. We are partially saved because while $SH^*_{eq}(M)$ is not a ring, it is a module over $SH^*(M)$, which is encoded in an operation $$P \mathcal{S}': SH^*(M) \otimes SH^*_{eq}(M) \rightarrow SH^*_{eq}(M),$$ and we can prove:

\begin{thm}[Symplectic Cartan relation]
\label{thm:symplecticcartan}

	$$P \mathcal{S}(x * y) = P \mathcal{S}'(x;P \mathcal{S}(y)).$$
\end{thm}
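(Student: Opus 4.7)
The plan is to implement a moduli-theoretic chain homotopy between the two sides, in the spirit of the proof of the quantum Cartan relation in \cite{me}. First, I would interpret each side as a chain-level count of $\mathbb{Z}/2$-equivariant $J$-holomorphic maps from an appropriate nodal Riemann surface with cylindrical ends. The LHS $P\mathcal{S}(x*y)$ arises from the configuration in which two copies of the pair-of-pants surface computing $x*y$ (exchanged by the $\mathbb{Z}/2$-action) are glued along their outputs to the two symmetric input ends of the symplectic-square surface $\Sigma_{eq}$ from Section \ref{subsec:eqpop}. The RHS $P\mathcal{S}'(x; P\mathcal{S}(y))$ arises from the configuration in which $\Sigma_{eq}$ (with $y$ on both inputs) is glued along its equivariant output to the equivariant input of the domain defining the module operation $P\mathcal{S}'$, which carries the $x$-insertion on its fixed locus.

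Second, I would construct a 1-parameter family of $\mathbb{Z}/2$-equivariant surfaces $\{\Sigma_t\}_{t \in [0,\infty]}$ with cylindrical ends, interpolating smoothly between these two degenerations. Concretely, the parameter $t$ controls a gluing length (or, equivalently, a cross-ratio of the inputs) such that as $t \to 0$ the surface breaks into the LHS configuration and as $t \to \infty$ it breaks into the RHS configuration; all Floer data, almost complex structures and perturbations are chosen $\mathbb{Z}/2$-equivariantly throughout, as in the setup of Sections \ref{subsec:eqpop}--\ref{subsec:equivariantgluing}.

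Third, counting equivariant $J$-holomorphic solutions across this family yields a $1$-dimensional parametric moduli space. By the equivariant compactness and gluing arguments of Section \ref{subsec:equivariantgluing}, its codimension-one boundary decomposes into the LHS contribution at $t = 0$, the RHS contribution at $t = \infty$, and standard Floer-breaking terms at interior $t$. The Floer-breaking contributions assemble into the boundary of a chain homotopy between chain-level representatives of the two sides, yielding the claimed equality in $SH^*_{eq}(M)$.

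The main obstacle, as is typical in arguments of this kind, is the second step: engineering the interpolating family so that the $\mathbb{Z}/2$-symmetry is preserved throughout, the cylindrical ends remain radial at infinity with compatible slopes, and the only codimension-one strata of the parametric moduli space are the two desired endpoint degenerations (together with standard Floer breaking). The requisite control over sphere/cylinder bubbling is supplied by the monotone or exact hypothesis, while escape to infinity is ruled out by the maximum-principle arguments already used for the equivariant continuation maps in Section \ref{subsec:equivariantcontinuationmaps}.
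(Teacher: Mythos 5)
Your proposal is correct and takes essentially the same approach as the paper: the paper constructs the interpolating family $m_R$, $R \in (1,\infty)$, by placing the five cylindrical ends at $Z_{LL} = 0$, $Z_{LR} = \infty$, $Z_D = 1$, $Z_{UL} = -1/R$, $Z_{UR} = -R$ on a sphere (with $\mathbb{Z}/2$-action $z \mapsto 1/z$), and identifies the $R \to \infty$ degeneration with $P\mathcal{S}(x*y)$ and the $R \to 1$ degeneration with $P\mathcal{S}'(x;P\mathcal{S}(y))$ exactly as you describe, concluding via the $1$-dimensional parametric moduli space $\bigsqcup_{R}\mathcal{M}_{i,R}$.
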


Here $*$ denotes the pair-of-pants product on $SH^*(M)$. We can see that this is of a similar form to the classical Cartan relation (in fact, one could rephrase the classical Cartan relation in terms of Theorem \ref{thm:symplecticcartan}). Indeed, this shows that in fact $P \mathcal{S}$ is completely determined by $P\mathcal{S}(1)$ and the operation $P\mathcal{S}'$, using that $P \mathcal{S}(x*y) = P \mathcal{S}'(x*y, P \mathcal{S}(1))$. In practise computing $P \mathcal{S}'$ is as difficult as computing $P \mathcal{S}$.

In Section \ref{subsec:calcs} we will demonstrate the nonvanishing of $P \mathcal{S}(a)$ for half of the additive generators $a \in SH^*(T^* S^n)$.

We finish this paper by considering $M$ being the total space of a negative line bundle over a closed symplectic manifold. The specific case will be $M = \text{Tot}(\mathcal{O}(-1) \rightarrow \mathbb{CP}^m)$. We begin with the work of Ritter in \cite{ritterFTNLB} and modify to the case of equivariant cohomology. In the given case, Ritter proved that the $c^*$-map induces an isomorphism $c^*: QH^*(M)/\ker r^k \xrightarrow{\cong} SH^*(M)$ for a particular linear homomorphism $r: QH^*(M) \rightarrow QH^*(M)$. We will use Diagram \eqref{equivcstar} to show that $c^*_{eq}$ descends to a map $$c^*_{eq}: QH_{eq}^*(M)/(Q \mathcal{S}(\ker r^k)) \rightarrow SH^*_{eq}(M)$$ in this case.  It is immediate from Corollary \ref{corollary:qsseqpopintertwine2} that \begin{equation} \label{equation:finaleq} P \mathcal{S} = c^*_{eq} \circ Q \mathcal{S} \circ (c^*)^{-1}, \end{equation} and we will prove (where $\Lambda$ is an appropriate Novikov field) that
\begin{thm}
\label{thm:finalthm}
Let $M= \text{Tot}(\mathcal{O}(-1) \rightarrow \mathbb{CP}^m)$, and $x$ generates $QH^*(M)$, where $T$ is the quantum variable with $|T|= 2m$. Then:
$$c^*_{eq}: QH_{eq}^*(M)/Q \mathcal{S}(x^m + T) \cdot \Lambda[[h]] \rightarrow SH^*_{eq}(M)$$ is an isomorphism.
\end{thm}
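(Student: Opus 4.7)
The plan is to derive Theorem \ref{thm:finalthm} from Ritter's non-equivariant computation together with the intertwining square \eqref{equivcstar} of Corollary \ref{corollary:qsseqpopintertwine2}, exploiting the $h$-adic filtration on $QH^*_{eq}(M) = QH^*(M)[[h]]$ and its counterpart on $SH^*_{eq}(M)$. First I would pin down $\ker r^k$ explicitly in this case: working in the presentation $QH^*(M) \cong \Lambda[x]/(x(x^m+T))$ over a characteristic-$2$ Novikov field, the relation $x(x^m+T)=0$ collapses the ideal $(x^m+T)\cdot QH^*(M)$ to the one-dimensional $\Lambda$-subspace $\Lambda\cdot(x^m+T)$, and Ritter identifies this with $\ker c^*$, giving $c^*: QH^*(M)/(x^m+T)\cdot \Lambda \xrightarrow{\cong} SH^*(M)$.

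Well-definedness of the induced $\bar c^*_{eq}$ is immediate from the diagram:
$$c^*_{eq}\bigl(Q\mathcal{S}(x^m+T)\bigr) = P\mathcal{S}\bigl(c^*(x^m+T)\bigr) = 0,$$
and since $c^*_{eq}$ is $\Lambda[[h]]$-linear (as the composite of the equivariant PSS map with the $\Lambda[[h]]$-linear direct limit of equivariant continuation maps from Lemma \ref{lemma:pairofpantsandcontin}), the whole submodule $Q\mathcal{S}(x^m+T)\cdot \Lambda[[h]]$ lies in $\ker c^*_{eq}$. Next I would reduce modulo $h$: the $h=0$ term of $Q\mathcal{S}(x^m+T)$ is the quantum square $(x^m+T)^{*2}$, and a direct calculation using $x(x^m+T)=0$ gives $(x^m+T)^{*2} = T(x^m+T)$, which generates the same $\Lambda$-line as $x^m+T$ because $T$ is a unit in the Novikov field. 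Hence the quotient in the theorem reduces mod $h$ to $QH^*(M)/(x^m+T)\cdot \Lambda$, and $\bar c^*_{eq} \bmod h$ is exactly Ritter's isomorphism.

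To promote the mod-$h$ isomorphism to an isomorphism over $\Lambda[[h]]$, I would run an $h$-adic successive-approximation argument. For surjectivity, any target in $SH^*_{eq}(M)$ can be lifted in each $h$-degree using Ritter's non-equivariant surjectivity, with the errors absorbed at the next $h$-order and the process converging in the $h$-adic topology. For injectivity, given $\alpha = \sum_k \alpha_k h^k$ with $c^*_{eq}(\alpha) = 0$, the lowest coefficient satisfies $c^*(\alpha_0) = 0$, so by Ritter $\alpha_0 = \lambda_0(x^m+T)$; subtracting $\lambda_0 T^{-1}\,Q\mathcal{S}(x^m+T)$ from $\alpha$ raises the order of $h$-vanishing of the obstruction, and iterating expresses $\alpha$ as an element of $Q\mathcal{S}(x^m+T)\cdot\Lambda[[h]]$. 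The hardest step will be setting up this framework rigorously: verifying that $SH^*_{eq}(M)$ is $h$-adically complete with $SH^*_{eq}(M)/h \cong SH^*(M)$, so that the iterative corrections genuinely converge, and, at each inductive step, controlling the Cartan correction terms in $Q\mathcal{S}$ so that the successive discrepancies can be absorbed into a single $\Lambda[[h]]$-multiple of the principal generator $Q\mathcal{S}(x^m+T)$ rather than spilling into higher generators.
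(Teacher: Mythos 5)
Your strategy is close in spirit to the paper's but differs in one structurally important way, and the place where you flag the ``hardest step'' is exactly where the gap lies.

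Your well-definedness observation, the computation $(x^m+T)*(x^m+T) = T(x^m+T)$, and the iterative subtraction trick for injectivity (subtract $\lambda_0 T^{-1} Q\mathcal{S}(x^m+T)h^j$, then raise the order of vanishing) are all correct and are essentially the same manoeuvre the paper carries out in Lemma~\ref{lemma:finallemma2}. However, you run this iteration against the equation $c^*_{eq}(\alpha)=0$, which forces you to know that passing to the $h^0$-coefficient of a class in $SH^*_{eq}(M)$ produces a bona fide class in $SH^*(M)$ and that $c^*_{eq}$ reduces to $c^*$ mod~$h$. This is not automatic: at the chain level there is a short exact sequence $0 \to CF^*_{eq}(2H) \xrightarrow{h} CF^*_{eq}(2H) \to CF^*(2H) \to 0$, but since $d_{eq}$ is a nontrivial $h$-deformation of $d$, in cohomology you only get a Gysin/Bockstein long exact sequence, not an identification $HF^*_{eq}/h \cong HF^*$; and the subsequent direct limit could further degrade any $h$-adic completeness. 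You correctly identify this as the hard part, but it must actually be proved, and the $T^*S^n$ discussion in Section~\ref{subsec:calcs} already shows the paper is careful not to assume $c^*_{eq}$ behaves transparently mod~$h$.

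The paper circumvents this entirely by never reducing $SH^*_{eq}(M)$ mod $h$. Instead it moves the whole problem onto the quantum side: using the diagram \eqref{dirlimetc}, $SH^*_{eq}(M)$ is expressed as the direct limit of $QH^*_{eq}(M) = QH^*(M)[[h]]$ along the explicit self-map $r_{eq} = \Psi_{eq}^{-1}\circ\Phi_{eq}\circ S^{eq}_{\tilde g}\circ\Psi_{eq}$, for which the mod-$h$ reduction is the completely transparent relation $r_{eq} \equiv r^2 \pmod h$ on a concrete free $\Lambda[[h]]$-module. One then proves (Lemma~\ref{lemma:finallemma1}) that $r_{eq}|_{\operatorname{Im} r_{eq}}$ is already an isomorphism, by explicit cancellation computations using $r_{eq}\circ Q\mathcal{S}=Q\mathcal{S}\circ r$ (Lemma~\ref{lemma:finalthm0}) together with the closed-form expression for $Q\mathcal{S}(x^m)$ in \eqref{equation:qsxm}; this is the substitute for Ritter's finite-dimensionality argument, which fails because $QH^*_{eq}(M)$ is infinite-dimensional over $\Lambda$. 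With stabilisation in hand, both surjectivity and injectivity of $c^*_{eq}$ on the quotient follow from the direct limit description plus Lemma~\ref{lemma:finallemma2}, with no need for an $h$-adic limit in the target. So your iteration for injectivity is the right idea, but it belongs on the source $QH^*_{eq}(M)$ against $r_{eq}$, not on the target against $c^*_{eq}$; and to make the surjectivity half work you also need the analogue of Lemma~\ref{lemma:finallemma1}, which your sketch does not supply.
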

Together with Equation \ref{equivcstar}, this yields the following commutative diagram:

\begin{equation}\label{equivFTNLB1}
\xymatrix{
QH^*(M)/ (x^m + T) \cdot \Lambda
\ar@{->}^-{Q\mathcal{S}}[r]
\ar@{->}^-{\cong}_{c^*}[d]
&
QH^*_{eq}(M)/ Q \mathcal{S}(x^m + T) \cdot \Lambda[[h]]
\ar@{->}_-{c^*_{eq}}^{\cong}[d]
\\ 
SH^*(M)
\ar@{->}^-{P \mathcal{S}}[r]
&
SH^*_{eq}(M)
}
\end{equation}

This will allow us to calculate the symplectic square in terms of the quantum Steenrod square. For $M = \text{Tot}(\mathcal{O}(-n) \rightarrow \mathbb{CP}^m)$ we can use the quantum Cartan relation, \cite[Theorem 1.2]{me}, to calculate the quantum Steenrod squares for $M$ similarly to in \cite[Section 6.1]{me}. Using Equations \eqref{equation:finaleq} and the calculation of the quantum Steenrod square, we can then calculate the symplectic square.

\subsection*{Acknowledgements}
I thank my supervisor Alexander Ritter for his guidance and support and I thank Paul Seidel for suggesting this project and for helpful conversations. I thank Dominic Joyce and Ivan Smith for their helpful comments on my thesis (and therefore on this paper). This work was supported by an EPSRC grant, reference: EP/M508111/1, and partially supported by the Simons Foundation, through a Simons Investigator grant (PI: Paul Seidel).

This work forms part of my Ph.D. thesis.

\section{Preliminaries}
		\subsection{Equivariant Floer cohomology}
		\label{subsec:equivHF}
		A full treatment of this due to Seidel can be found in \cite[Section (4b)]{seidel}. The cited paper focuses on the case where $M$ is a Liouville domain, with \cite[Section 7]{seidel} discussing the nonexact case. We will be considering the specific case where our symplectic manifold is either monotone or exact (we use this to deal with bubbling). We are only interested in the cases where the symplectomorphism is Hamiltonian. For a Hamiltonian symplectomorphism $\phi^H$, we rephrase the definition in terms of Hamiltonian rather than fixed point Floer cohomology. 

Recall that the fixed-point Floer cohomology $HF^*(\phi_1^H)$ for a Hamiltonian symplectomorphism, $\phi_1^H$, was defined in \cite{floerfixedpoints}. Using an unwrapping isomorphism one can show that the fixed-point and Hamiltonian Floer cohomologies are the same, i.e. $HF^*(\phi_1^H) \cong HF^*(H)$. Specifically, given a $1$-periodic Hamiltonian $H = H_t$, let $\phi_t = \phi^{H}_t$ be the time $t$ flow of $H_t$. By this we mean the unique symplectomorphism $(\phi_t)_{t \in [0,1]}$ such that $\phi_0 = \text{id}$ and $\partial \phi_t / \partial t = - \nabla H_t$. Suppose that $a$ is a fixed point of $\phi_1$. Then $b(t) = \phi_t(a)$ is a Hamiltonian loop with respect to $H_t$. Further, if $v: \mathbb{R} \times \mathbb{R} \rightarrow M$ is a $\phi_1$-twisted $J$-holomorphic strip, i.e. $v$ is $J$-holomorphic and $v(s,t+1) = \phi_1 v(s,t)$, then let $u(s,t) = \phi_t^{-1} v(s,t)$ for $(s,t) \in \mathbb{R} \times \mathbb{R}$. Then $u(s,t)$ descends to a map $\overline{u}: \mathbb{R} \times \mathbb{R} / \mathbb{Z} \rightarrow M$ that satisfies \begin{equation} \label{equation:floertraj} \dfrac{\partial u}{\partial s} + J \left( \dfrac{\partial u}{\partial t} - X_H \right) = 0. \end{equation} Hence the generators and differentials of the two Floer cohomologies correspond bijectively, hence they are isomorphic.

Let $H$ be a Hamiltonian. Let $$\mathcal{L} = \{ x: \mathbb{R}/2 \mathbb{Z} \rightarrow M | \dot{x} = X_H \}$$ be the space of $2$-periodic Hamiltonian loops with respect to the Hamiltonian $H$. Recall that $x(t)$ is a $2-$periodic Hamiltonian loop for $H$ if and only if $x(2t)$ is a $1$-periodic Hamiltonian loop for $2 \cdot H$. We think of the generators of $CF^{*}(2 \cdot H)$ (working over the Novikov ring $\Lambda$) as being $2$-periodic Hamiltonian loops for $H$, as this will be more useful for the definition of the symplectic square in Section \ref{subsec:eqpop}. Define  $$\mathcal{J}_{\epsilon} = \{ (J_t) : (J_t)  \text{ is an almost compex structure on } M \text{ with } J_{t+\epsilon} = J_{t} \}.$$ There is a $\mathbb{Z}/2$ action $\rho_{*}$ on $\mathcal{J}_2$, with $(\rho_* J)_t = J_{t+ 1}$. The fixed set of $\rho_*$ is $\mathcal{J}_1 \subset \mathcal{J}_2$. The map $\rho_*$ is induced by a map $\rho$ on $\mathcal{L}$, where $(\rho x)(t) = x(t+ 1)$. This induces an involution on the abelian group $CF^{*}(2 \cdot H)$ by action on the basis elements, i.e. $2$-periodic Hamiltonian loops with respect to $H$. However, this involution is not compatible with the Floer differential.

		More concretely, transversality may fail if we chose $J \in \mathcal{J}_1$, and hence we cannot ensure that $\rho_*$ fixes $J$. In the Floer differential we count Floer trajectories, which are smooth maps satisfying Equation \eqref{equation:floertraj}; observe this equation depends upon a choice of $J$. So our differential $d = d_{J}$ depends upon our almost complex structure, and hence is not fixed by $\rho_*$. When $\rho_*$ acts on the abelian groups $CF^* (2 \cdot H)$, the almost complex structure $J \in \mathcal{J}_2$ changes to $\rho_* J$. So $\rho$ induces an isomorphism of chain complexes $$\rho: (CF^* (2 \cdot H), d_{\rho_{*}J}) \cong (CF^* (2 \cdot H), d_{J})$$ but $\rho$ is not a chain involution itself. Precomposing this with a continuation map  $$\Phi: (CF^* (2 \cdot H), d_{J}) \rightarrow (CF^* (2 \cdot H), d_{\rho_{*}J})$$ yields a chain map $$ \rho \circ \Phi: (CF^* (2 \cdot H), d_{J}) \rightarrow (CF^* (2 \cdot H), d_{J}).$$ We notice that $\rho \circ \Phi$ is a chain map, but not an involution. However, it induces an involution on homology. This gives a partial reason behind the difficulties in Section \ref{sec:symplcartan}. We contrast with the quantum equivariant cohomology case, where the $\mathbb{Z}/2$-action on chains is trivial, hence $QH^*_{eq}(M) = QH^*(M)[[h]]$. 

We now construct $HF_{eq}^* (2 \cdot H)$. Given coordinates $x_{i}$ for $i=0,1,2,...$ on $S^{\infty}$, so $$S^{\infty} = \left\{ (x_0,x_1,...) \biggr\rvert \sum x_i^2 = 1 \text{ and only finitely many } x_i \text{ are nonzero} \right\},$$ define a Morse function \begin{equation} \label{equation:sinftymorse} g: S^{\infty} \rightarrow \mathbb{R}, \ g(x) = \sum_{k} k x_{k}^{2} \end{equation} on $S^{\infty}$ using the round metric, with critical points $v^{i,\pm}$ of index $i$. The $\{ v^{i,+} \}$ are the standard Euclidean basis in $\mathbb{R}^{\infty}$ and $v^{i,-} = - v^{i,+}$. Fix an almost complex structure $J \in \mathcal{J}_2$ on $M$. For each $v \in S^\infty$ pick $J_{eq,v} \in \mathcal{J}_2 $ such that:
		\begin{enumerate}
			\item $J_{eq,-v} = \rho_* J_{eq,v}$
			\item In a neighbourhood of $v^{i,+}$ for any $i$, the almost complex structure $J_{eq,v}$ is independent of $v$ and equal to $J$.
			\item $J_{eq,\tau (v)} = J_{eq,v}$ for $\tau : S^\infty \rightarrow S^\infty$ the shift map, i.e. $\tau(x_0, x_1, ...) = (0, x_0, x_1,...)$.
		\end{enumerate}

		Given a negative gradient flowline $w: \mathbb{R} \rightarrow S^\infty$ of $g$ above, with $w(-\infty) = v^{i, \pm}$ and $w(\infty) = v^{0, +}$, we define a domain dependent almost complex structure on $M$, parametrised by $(s,t) \in \mathbb{R} \times \mathbb{R}/2 \mathbb{Z}$, \begin{equation} \label{equation:jweq} J^{w}_{s,t} = J_{eq,w(s),t}. \end{equation} Using $J^w_{s,t}$ we define a Cauchy Riemann Equation, for $x,y$ being $2$-periodic Hamiltonian loops with respect to $H$:

\begin{equation}
\label{equation:eqHF}
              \begin{cases} \begin{array}{ll}
			u : \mathbb{R} \times (\mathbb{R}/ 2\mathbb{Z}) \rightarrow M \\
			\partial_s u + J^w_{s,t} \partial_t u = - \nabla H \\
			{\displaystyle \lim_{s \rightarrow -\infty} u(s,t) = y(t) \text{ if } w(-\infty) = v^{i,+}} \\
			{\displaystyle \lim_{s \rightarrow -\infty} u(s,t) = y(t+ 1) \text{ if } w(-\infty) = v^{i,-}} \\
			{\displaystyle \lim_{s \rightarrow +\infty} u(s,t) = x(t)} \\
                \end{array} \end{cases}
\end{equation}

There is an $\mathbb{R}$ action simultanously translating the $s$ for both $w$ and $u$. Quotienting by this gives a moduli space of ($\mathbb{R}$-equivalence classes of) pairs, $$\mathcal{M}^{i,\sigma}_{eq}(y,x) = \{ [w,u] : \text{ the conditions in } (\ref{equation:eqHF}) \text{ hold, and } w(-\infty) = v^{i,\sigma} \}$$ 

		\begin{defn}
			Define the equivariant differential $$d_{eq} = d_{J} + \sum_{i \ge 1} h^{i} (d^{i,+}_{eq} + d^{i,-}_{eq})$$ where $d_{J}$ is the differential on Hamiltonian Floer Cohomology (with almost complex structure $J$), and $$d^{i,\sigma}_{eq}(x) = \sum_{y} \# \mathcal{M}^{i,\sigma}_{eq}(y,x) y$$ 

			Here $\#$ is the count of isolated solutions in the parametrised moduli space. 

		\end{defn}

		It is shown in \cite[Definition 4.4]{seidel} that $d_{eq}$ is a differential on $$CF^*_{eq}(2 \cdot H) := CF^*(2 \cdot H)[[h]].$$ Let $HF^*_{eq}(2 \cdot H)$ be the cohomology of $(CF^*_{eq}(2 \cdot H), d_{eq}).$ Consider $HF_{eq}^* (2 \cdot H)$ for $2 \cdot H$ a $C^2$-small, time independent Morse function. The elements of $\mathcal{M}^{i,\sigma}_{eq}(y,x)$ become negative gradient flowlines of $H$, and $HF_{eq}^{*} (2 \cdot H)$ becomes  $HM_{\mathbb{Z}/2}^* (M,H)$, the equivariant Morse cohomology of $M$ (with the trivial $\mathbb{Z}/2$-action).

		\begin{rmk}

\begin{enumerate}
\item In practise we need a time dependent perturbation $H = H_t$, although we choose this such that $H_t = H_{t+1}$ (even though the Hamiltonian loops we consider are $2$-periodic). In this case $(2 \cdot H)_t := 2H_{2t}$.
\item We cannot choose $J \in \mathcal{J}_1$ because our moduli spaces are not guaranteed to be regular.
\item We will discuss compactification and gluing in Section \ref{subsec:equivariantgluing}.
\end{enumerate}
		\end{rmk}

		\subsection{The symplectic square and equivariant pair-of-pants product}
		\label{subsec:eqpop}
		The construction of the symplectic square (which is a specific case of the equivariant pair-of-pants product due to Seidel) follows that given in \cite[Section (4c)]{seidel}, although we work specifically in the case of a Hamiltonian symplectomorphism. The symplectic square will be a homomorphism: $$P \mathcal{S} : HF^{*}(H) \rightarrow HF^{2*}_{eq}(2 \cdot H).$$  

		Let $S$ be a 2-to-1 branched cover of the cylinder $\mathbb{R} \times \mathbb{R}/ \mathbb{Z}$, with covering map $\pi$. Specifically, there is a left-hand region parametrised by $$\epsilon^+ : (-\infty, -1] \times \mathbb{R}/2 \mathbb{Z} \rightarrow S$$ and two right hand regions parametrised by $$\delta^\pm : [1, \infty) \times \mathbb{R}/ \mathbb{Z} \rightarrow S.$$ Let $\epsilon^-(s,t) = \epsilon^+(s,t+1)$. The covering involution $\gamma$ on $S$ swaps $\epsilon^\pm$, and swaps $\delta^\pm$. 

				Choose $J_{\text{left},v,s,t} \in \mathcal{J}_2$ for $s < 1$, $v \in S^{\infty}$ such that:
		\begin{enumerate}
			\item $J_{\text{left},-v,s,t} = J_{\text{left},v,s,t+1}$
			\item $J_{\text{left},v,s,t} \in \mathcal{J}_1$ for $s \ge -1$
			\item $J_{\text{left},v,s,t} = J_{\text{eq},v, t}$ for $s \le -2$
			\item $J_{\text{left},\tau v, s, t} = J_{\text{left},v,s,t}$ 
		\end{enumerate}

		Further, choose $J^{\pm}_{\text{right},v,s,t} \in \mathcal{J}_1$ for $s > -1$ such that:
		\begin{enumerate}
			\item $J^{\pm}_{\text{right},-v,s,t} = J^{\mp}_{\text{right},v,s,t}$
			\item $J^{\pm}_{\text{right},v,s,t}$ is independent of $v,s$ for $s \ge 2$
			\item $J^{\pm}_{\text{right},v,s,t} = J_{\text{left},v,s,t}$ for $s \le 1$
			\item $J^{\pm}_{\text{right},\tau v, s, t} = J^{\pm}_{\text{right},v,s,t}$ 
		\end{enumerate}

		Let $w : \mathbb{R} \rightarrow S^{\infty}$ be a negative gradient flowline of $f$. Define $J^{w}_{z}$ for $z \in S$, with $J^w_z$ illustrated in Figure \ref{fig:jpoppic}.
		\begin{enumerate}
			\item $J^w_{z} = J_{\text{left},w(s), s,t} = J^{\pm}_{\text{right},w(s),s,t}$ if $\pi(z) = (s,t) \text{ with } -1 \le s \le 1$
			\item if $z = \epsilon^{+}(s,t)$ then $J^w_{z}=J_{\text{left},w(s),s,t}$ 	
			\item if $z = \delta^{\pm}(s,t)$ then $J^w_{z} = J^{\pm}_{\text{right},w(s),s,t}$
		\end{enumerate}

		\begin{figure}
			\input{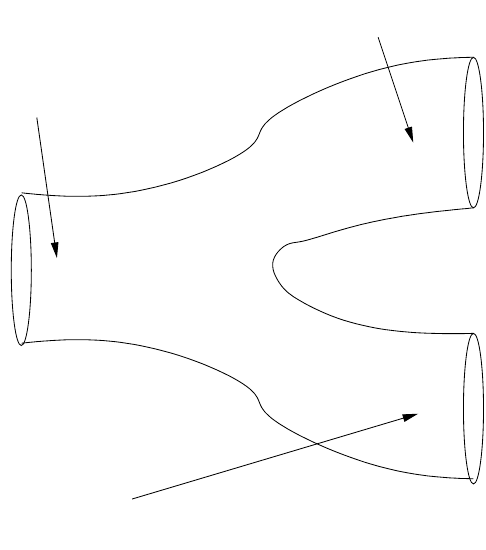_t}
			\caption{$J$ for the symplectic square.}
			\label{fig:jpoppic}
		\end{figure}

	For a fixed complex structure $j$ on $S$, given a family of almost complex structures $J^w_z$ as above, $x$ being a $1$-periodic Hamiltonian loop for $H$ and $y$ a $2$-periodic Hamiltonian loop for $H$, consider the pair-of-pants equation:

\begin{equation}
\label{equation:eqPOP}
              \begin{cases} \begin{array}{ll}
			u : S \rightarrow M \\

			(du - X_H \otimes \beta + Y) \circ j = J^w_{z}(du - X_H \otimes \beta + Y) \\

			{\displaystyle \lim_{s \rightarrow -\infty}} u(\epsilon^{+}(s,t)) =               \begin{cases} \begin{array}{ll}
			y(t) \text{ if } w(-\infty) = v^{i,+}\\
			y(t+1) \text{ if } w(-\infty) = v^{i,-}\\
                \end{array} \end{cases}\\

			{\displaystyle \lim_{s \rightarrow +\infty}} u(\delta^{\pm}(s,t)) = x(t) \\
                \end{array} \end{cases}
\end{equation}

where $X_H$ is the Hamiltonian vector field of $H$, and $\beta$ is a $1$-form on $S$ with $d \beta = 0$ such that $\beta = dt$ near the cylindrical ends of $S$. The holomorphicity equation in \eqref{equation:eqPOP} is perturbed by a small Hamiltonian perturbation $Y$, e.g. with support restricted to $\delta^{\pm}((1,2) \times \mathbb{R}/\mathbb{Z})$, where $Y$ is invariant under $\gamma$, to ensure that there are no solutions that are constant at Hamiltonian loops of $H$.

The moduli space $\mathcal{M}_{prod}^{i,\sigma}(y,x)$ consists of rigid pairs $(w,u)$ with $w : \mathbb{R} \rightarrow S^{\infty}$ a negative gradient flowline of $g$ such that $w(-\infty) = v^{i,\sigma}$ and $w(\infty) = v^{0,+}$, and $u$ being a solution to equation (\ref{equation:eqPOP}).

	\begin{defn}[Symplectic square]
	\label{defn:eqpop}

$$P \mathcal{S}^{i,\sigma} : CH^*(H) \rightarrow (CF^{\bullet} (2 \cdot H))^{2*-i}, \quad P \mathcal{S}^{i, \sigma}(x) = \sum_{y \in \mathcal{L}} \# \mathcal{M}_{prod}^{i,\sigma}(y,x)y$$ where $\#$ counts isolated solutions.

		$$P \mathcal{S}(x) := \sum_{i \ge 0} (P \mathcal{S}^{i,+}(x) + P \mathcal{S}^{i,-}(x)) h^i,$$ which descends to a map on cohomology as shown in Section \ref{subsubsec:eqpopischainmap}. 

	\end{defn}

\subsubsection{The symplectic square is a map on cohomology}
	\label{subsubsec:eqpopischainmap}
		Recall that given a chain complex $(C,d)$ with a chain involution $\iota$, the $\mathbb{Z}/2$-equivariant cohomology of $(C,d, \iota)$ is $$H^*_{\mathbb{Z}/2}(C) := H^*(C_{\mathbb{Z}/2}, d_{\mathbb{Z}/2}),$$ where $$C_{\mathbb{Z}/2} = C[[h]], \ d_{\mathbb{Z}/2} = d + (id + \iota) h.$$ In particular, for $C = CF^*(H) \otimes CF^*(H)$ with $\iota$ defined on generators by $\iota(x \otimes y) = y \otimes x$, we obtain $d_{\mathbb{Z}/2}(x \otimes y) = d(x \otimes y) + (x \otimes y + y \otimes x)h$, where $d$ is the Floer differential. In \cite{seidel}, Seidel constructs a map, the equivariant pair-of-pants product, defining: $$\mathcal{P}_{i,\sigma} : (CF^*(H) \otimes CF^*(H))[[h]] \rightarrow CF^*_{eq}(2 \cdot H),$$ the definition of which is identical to that of $P \mathcal{S}$ in Definition \ref{defn:eqpop} except that to calculate the coefficient of $y$ in $\mathcal{P}_{i,\sigma}(x_+(t) \otimes x_{-}(t))$, we replace $x(t)$ by $x_{\pm}(t)$ in the last line of Equation \eqref{equation:eqPOP}. Seidel shows in \cite[Equation (4.100)]{seidel} that $\mathcal{P}$ satisfies:
\begin{equation} \label{equation:eqpopchainmap} \sum_{j+k=i} d^j_{eq} \circ \mathcal{P}_k(x \otimes y) = \mathcal{P}_i(d(x \otimes y)) + \mathcal{P}_{i-1}(x \otimes y + y \otimes x), \end{equation} where $d$ is the standard Floer differential, $d^i_{eq} = d^{i,+}_{eq} + d^{i,-}_{eq}$, $\mathcal{P}_i = \mathcal{P}_{i,+}+ \mathcal{P}_{i,-}$ and $\mathcal{P} = \sum_{i \ge 0} h^i \mathcal{P}_i$ (this uses the compactification in Section \ref{subsec:equivariantgluing}). Hence $\mathcal{P}$ is a chain map. One can also define the doubling map $$\eta: CF^*(H) \rightarrow (CF^*(H) \otimes CF^*(H))[[h]], \quad \alpha \mapsto \alpha \otimes \alpha,$$ and it is immediate that $P \mathcal{S} = \mathcal{P} \circ \eta$. It is also immediate that $\eta$ descends to a map $\eta: HF^*(H) \rightarrow H^{2*}_{\mathbb{Z}/2}(CF^*(H) \otimes CF^*(H))$ using the involution that swaps the factors on the right hand side. Hence $P \mathcal{S}$ is well defined on cohomology.

\begin{rmk}
The operation $\mathcal{P}$ will contain more information than $P \mathcal{S}$ exactly in the instances that $\mathcal{P}$ does not vanish on the complement of $\text{Im}(\eta)$.
\end{rmk}

\begin{rmk}
\label{rmk:shiftandnegativeaction}
	If $w : \mathbb{R} \rightarrow S^{\infty}$ is a negative gradient flowline for $g$, with $w(-\infty) = v^{i, \sigma'}$ and $w(\infty) = v^{j,\sigma}$ with $j > 0$ and $\sigma, \sigma' \in \{ \pm \}$, then $w$ corresponds to $\sigma \tau^j w'$, where $w'(-\infty) = v^{i-j,\sigma \sigma'}$ and $w'(\infty) = v^{0,+}$. It is important that the $J^w$ satisfy the conditions at the beginning of Section \ref{subsec:eqpop}, because they mean that:
	
\begin{enumerate}
	\item $J^{\tau w}_z = J^w_z$,
	\item $J^{-w}_z = J^w_{\gamma z}$.
\end{enumerate}

	This is used when compactifying the moduli spaces, which is discussed in Section \ref{subsec:equivariantgluing}: given any negative gradient flowline $w$ of $g$ as above, $(w,u)$ satisfies the equivariant Floer equation \eqref{equation:eqPOP} exactly when 
$$ \begin{cases} \begin{array}{ll}
( \tau^j w, u) \text{ does, if } \sigma = + \\
( - \tau^j w, u \circ \gamma) \text{ does, if } \sigma = - \\
\end{array} \end{cases}\\$$

It is necessary here that $\gamma$ is holomorphic, as repeated at the beginning of Section \ref{sec:symplcartan}, so that $u \circ \gamma$ is holomorphic.

\end{rmk}

		\subsection{Quantum cohomology}

	\label{subsec:quantcupprod2}
		For more details on the quantum cup product, see \cite[Chapter 9.2]{jhols} whose exposition we follow. Throughout, $PD$ refers to the Poincar\'e duality operation over $\mathbb{Z}/2$ coefficients. In the case of an open manifold $X$, Poincar\'e duality states that $PD: H^*(X) \cong H^{\text{lf}}_{\dim X - *}(X)$. The open manifolds we will consider have a conical end, so they deformation retract along the fibres of the conical end to a manifold with boundary $(C,\partial C)$ of the same dimension. We interchangeably use $PD: H^*(C) \cong H_{\dim X - *}(C, \partial C)$. The notation $A^{\vee}$ denotes the intersection dual of $A$ (after fixing a basis) with respect to either the pairing $H_*(M) \otimes H^{\text{lf}}_*(M) \rightarrow \mathbb{Z}/2$ (for an open manifold $M$) or $H_*(C) \otimes H_*(C, \partial C) \rightarrow \mathbb{Z}/2$ (for a compact manifold with boundary $C$).

		\begin{defn}
		\label{defn:weakmonotonicity}
			A symplectic manifold $M$ is \textit{weakly monotone} if for every spherical $A \in H_2(M,\mathbb{Z})$ (i.e. $A$ in the image of the Hurewicz homomorphism) such that $\omega(A) > 0$ and $c_1(A) \ge 3-m$, then $c_1(A) \ge 0$, where $\dim(M) = 2m$. 
		\end{defn}

		Let $(M, \omega)$ be a weakly monotone symplectic manifold of dimension $n$, with a fixed almost complex structure $J$ compatible with $\omega$. As an abelian group, $QH^{*}(M) = H^{*}(M) \otimes \Lambda$, where $\Lambda$ is the Novikov ring associated to $\omega$ as in \cite[Chapter 9.2]{jhols}. Specifically, if $\Gamma$ is the image of the Hurewicz homomorphism then $\omega: \Gamma \rightarrow \mathbb{Z}$ is a homomorphism, and $$\Lambda = \left\{ \lambda = \sum_{A \in \Gamma} \lambda_A \cdot q^{A} \biggr\rvert \begin{array}{l} \lambda_A \in \mathbb{Z}/2,\text{ such that for all } c>0 \text{ there are } \\ \text{only finitely many } \lambda_A \neq 0 \text{ with } \omega(A) \le c \end{array} \right\}.$$ There is a natural grading given by $|q^{A}| = 2c_1(A)$. 

		As an important note, we denote the quantum cochains $$QC^*(M) := C^*(M) \otimes_{\mathbb{Z}/2} \Lambda.$$ Then $QH^*(M) = H^*(QC^*(M), d \otimes id_{\Lambda})$, where $d$ is the differential on $C^*(M)$. The quantum product below will be defined at the chain level, and then descend to maps on (co)homology. The definition is as in \cite[Section 11.1]{jholssympl}.

		\begin{defn}[Quantum Product]
			For $a,b \in H^*(M)$, $$a * b := \sum_{A \in H_2(M)} (a*b)_A 	q^A$$ where $(a*b)_A$ is characterized by $\int_Z (a*b)_A \cup c = GW^M_{A,3}(a,b,c)$ for all $c \in H^{\text{lf}}_{|a| + |b|-2c_1(A)}(M)$, where $GW^M_{A,3}$ are the $3$-pointed genus $0$ Gromov-Witten invariants, using notation as in \cite[Chapter 7]{jholssympl}. This descends to a well defined map on homology, $H^*(M) \otimes H^*(M) \rightarrow QH^*(M)$. Extending $\Lambda$-bilinearly defines $*$ on $QH^*(M)$.
		\end{defn}

		Notice that $*$ is compatible with the grading, using $|q^A|=2 c_1(A)$. If $A=0$, the intersection is a point and this recovers the classical intersection product. We will not recap the technical issues of bubbling, multiply covered curves and so on that are covered in \cite[Sections 3-6]{jhols}. We interchangeably use Morse theory for $(C, \partial C)$, which is elaborated on in the next section.

		\subsection{The quantum Steenrod square}
		\label{subsec:SqQviaMorse2}
We will follow the author's previous work in \cite{me}, and give two definitions of the quantum Steenrod square: the first will use Morse theory and the second will involve intersections of chains. These are equivalent, as shown in the cited paper. 

Henceforth we will consider symplectic manifolds $(M,\omega)$ that are monotone or exact (however, see Remark \ref{rmk:generalwm} for a note about a definition in the case of a general weakly monotone symplectic manifolds).
\begin{defn}
\label{defn:monotone}
A symplectic manifold $(M,\omega)$ is \textit{monotone} if there exists a constant $\lambda > 0$ such that every map $u: S^{2} \rightarrow M$ satisfies $$c_{1} (u_{*}([S^{2}])) = \lambda \cdot E(u)$$ where $E(u) = \int_{S^{2}} u^{*} \omega \geq 0$ is the symplectic energy of $u$, and $c_1 = c_1(TM)$.
\end{defn}

\begin{defn}
\label{defn:exact}
A symplectic manifold $(M, \omega)$ is \textit{exact} if the $2$-form $\omega$ is exact.
\end{defn}

Further, we will require our manifold be open and convex at infinity. All of the results in this paper extend to the case where $M$ is closed. The convex at infinity condition, which we will shorten to {\it convex}, implies among other things that $M$ satisfies the conditions of the previous section. Such symplectic manifolds, specifically their symplectic cohomology, were first studied by Ritter in \cite[Section 3]{ritterale}, where the technical machinery is proven in depth. The definition we use is from \cite[Section 3.1]{ritterFTNLB}.

\begin{defn}
\label{defn:convexity}
A symplectic manifold $(M,\omega)$ is {\it convex} if \begin{equation} \label{equation:convex} M = C \cup_{\partial C} (\partial C \times [1,\infty)), \end{equation} for some compact symplectic manifold $C$ with contact-type boundary $(\partial C, \theta)$, such that $(M - C, \omega|_{M - C}) \cong (\partial C \times [1,\infty), d(e^r \theta))$, where $r \in [0,\infty)$ denotes the radial coordinate.
\end{defn}

Examples of such symplectic manifolds are (the completion of) Liouville domains. Note that one can think of a closed symplectic manifold to be defined as in \eqref{equation:convex}, but with $\partial C = \emptyset$.

Recall that $S^{\infty}$ has subsets $S^{i}$ such that each $$S^i = \{ (x_0, x_1, \ldots, x_i, 0, 0, \ldots) \in \mathbb{R}^{\infty} : \sum_i x_i^2 = 1 \},$$ is an antipodally invariant equator, and $\bigcup_{i \ge 0} S^i = S^{\infty}$. We fix a generic almost complex structure $J$ on $M$ compatible with $\omega$, and then perturb this to a choice of almost complex structure $J_{v,z}$ on $M$ for each $v \in S^{\infty}$ and $z \in S^2$, satisfying $J_{v,z} = J_{v, z/(z-1)}$. For brevity if a pair $v \in S^{\infty}$ and $u: S^2 \rightarrow M$ satisfies \begin{equation} du|_z \circ j|_z = J_{v,z}|_{u(z)} \circ du|_z \end{equation} for all $z \in S^2$, then we say that $u$ is $J_{v,z}$-holomorphic.

We will use Morse theory for $(C, \partial C)$, as detailed in \cite{morcombound}. Choosing a Morse function $f$ such that $\nabla f \pitchfork \partial C$ and $f$ is both minimised and constant on $\partial C$, we obtain a Morse complex whose homology $HM_{*}(C,\partial C)$ recovers $H_{*}(C, \partial C)$ in the same way as Morse homology for closed manifolds. Suppose that we pick a choice of perturbations $f_{v,s}$ for $v \in S^{\infty}$ and $s \in [0,\infty)$ such that:
\begin{enumerate}
\item $f_{v,s} = f$ for $s \ge 1$.
\item $f_{v,0}$ are generic, to make all of the moduli spaces transverse (see Remark \ref{rmk:transversesq}).
\item $f_{\tau v, s} = f_{v,s}$ for all $v,s$.
\end{enumerate}
We ensure that $f_{v,s}$ are $C^2$-close to $f$ for all $v,s$: as the Morse-Smale condition is open and dense in the space consisting of pairs of a smooth function with a metrics, we can assume the $f_{v,s}$ are all Morse-Smale. We fix that $f_{v,s}|_{\partial C}$ is independent of $v,s$. Then there are two equivalent definitions of the Morse quantum Steenrod square, depending on whether we use singular or Morse homology on our parameter space $\mathbb{RP}^{\infty} = S^{\infty} / (\mathbb{Z}/2)$. Specifically, recall $g: S^{\infty} \rightarrow \mathbb{R}$ in Equation \eqref{equation:sinftymorse}, and let $\mathcal{Y}$ be $S^2$ with three half-lines attached. There is one incoming edge $e_1$ parametrized by $(-\infty,0]$ and two outgoing edges $e_2,e_3$ parametrized by $[0,\infty)$ The edges are attached at $0,1,\infty$ respectively. Let:
\begin{itemize}
\item $\mathcal{M}_{\text{Sing}, i, A}(x,y)$ consists of pairs $(v, u)$ where $v \in S^{i}$ and $u: \mathcal{Y} \rightarrow M$ such that $\dfrac{d}{dt}(u|_{e_1})(s) = - \nabla f(u|_{e_1}(s)), \ \dfrac{d}{dt}(u|_{e_2})(s) = - \nabla f_{v,s}(u|_{e_2}(s))$ and $\dfrac{d}{dt}(u|_{e_3})(s) = - \nabla f_{-v,s}(u|_{e_3}(s))$. The maps $u|_{S^2}$ are $J_{v,z}$-holomorphic and represent the homology class $A \in H_2(M)$. Asymptotic conditions are given in point $(2)$ below.
\item $\mathcal{M}_{\text{Morse}, i ,A}(x,y)$ consists of pairs $(w, u)$ where $w: \mathbb{R} \rightarrow S^{i}$, $\dot{w} = - \nabla g$ and $u: \mathcal{Y} \rightarrow M$ such that $\dfrac{d}{dt}(u|_{e_1})(s) = - \nabla f(u|_{e_1}(s)), \ \dfrac{d}{dt}(u|_{e_2})(s) = - \nabla f_{w(s),s}(u|_{e_2}(s))$ and $\dfrac{d}{dt}(u|_{e_3})(s) = - \nabla f_{-w(s),s}(u|_{e_3}(s))$. The maps $u|_{S^2}$ are $J_{w(0),z}$-holomorphic and represent the homology class $A \in H_2(M)$. Asymptotic conditions are given in points $(1)$ and $(2)$ below.
\end{itemize}
\begin{enumerate}
\item $w(-\infty) = v^{i, \sigma_1}, \ w(\infty) = v^{0, \sigma_2}$ for $\sigma_1, \sigma_2 \in \{ \pm \}$. 
\item $ u|_{e_i}(s) \rightarrow \begin{cases} \begin{array}{ll}
y \ \text{ for } i=1 \text{ and } s \rightarrow -\infty \\
x \ \text{ for } i=2,3 \text{ and } s \rightarrow \infty \\
\end{array} \end{cases}$
\end{enumerate}

For a generic choice of $J_{v,z}$ and $f_{v,s}$, the moduli spaces above will be smooth manifolds (see Remark \ref{rmk:transversesq}).

\begin{defn}[The Morse Quantum Steenrod Square]
\label{defn:mqssv2}
For $i \in \mathbb{Z}$, $A \in H_2(M)$ and $x,y \in \text{crit}(f)$ such that $|y| = 2|x| - i - 2 c_1(A)$, the coefficient of $y \cdot h^i \cdot q^A$ in $Q \mathcal{S}(x) \in QH^*_{eq}(M)$ is $\# \mathcal{M}_{\bullet, i, A}(x,y) / (\mathbb{Z}/2)$, where $\bullet = \text{Morse}$ or $\text{Sing}$ and the $\mathbb{Z}/2$ action is $-1 \times r^*$, where $r^* u = u \circ r$, and $r: \mathcal{Y} \rightarrow \mathcal{Y}$ acts by fixing $e_1$, swapping $e_2$ and $e_3$ (without changing the parametrisation) and acting by $z \mapsto 1/z$ on $S^2$.
\end{defn}

The fact that these two definitions are equivalent follows from the isomorphism between singular and Morse cohomology, as in \cite{schwarzmorsesingiso}. Note that as we are always working in some finite submanifold $S^i \subset S^{\infty}$, and the $- \nabla g$-flowlines preserve these submanifolds, so the resulting equivalence of the definitions is a straightforward application of this isomorphism (i.e. we do not need to take into account problems involving the infinite-dimensional $S^{\infty}$).

Further, the proof that $Q \mathcal{S}$ is a chain map is proved identically to Section \ref{subsubsec:eqpopischainmap}, except that one replaces $\mathcal{P}$ by $\mathcal{Q}$ and the left hand side of Equation \eqref{equation:eqpopchainmap} with $d \circ \mathcal{Q}_i(x \otimes y)$ (which corresponds to the fact that, unlike equivariant Hamiltonian Floer cohomlology, the equivariant quantum cohomology uses the trivial $\mathbb{Z}/2$-action).

We give in Definition \ref{defn:eqpopqss} an alternative description of the quantum Steenrod square involving pseudocycles. Using the pseudocycles arising from Morse theory (see \cite{schwarzmorsesingiso}), one can show that Definition \ref{defn:mqssv2} is a particular case of Definition \ref{defn:eqpopqss}. The following definition of the quantum Steenrod square will be used strictly for computational purposes.

Recall that for a manifold with boundary $(C, \partial C)$, a homology class $\alpha \in H_*(C, \partial C)$ is realizable if there is a manifold with boundary $A$ and $\mu: (A, \partial A) \rightarrow (C, \partial C)$ with $\mu_*([A]) = \alpha$. Thom proved in \cite{thom} (for $\partial C = \emptyset$) that all homology classes over $\mathbb{Z}/2$ coefficients are realizable. A slight modification of the constructive proof by Buoncristiano-Hacon in \cite[Theorem B]{buonhacon} yields that all elements of $H_*(C,\partial C)$ are realisable. Hence, we associate a homology class $\alpha$ interchangeably with a pair $(A,\mu)$. The conditions we placed on $M$ allow us to pick representatives of $x \in H^{\text{lf}}_*(M)$ to be contained in $H_*(C, \partial C)$.

Fix a basis $\{ b \} \in \mathcal{B}$ of $H^*(M)$. Denote by $\beta^{\vee}: Y_b \rightarrow M$ a pseudocycle representative of $PD(b^{\vee})$, and by $\beta^{\vee} \times \beta^{\vee}: Y_b \rightarrow M \times M$ the map such that $\beta^{\vee} \times \beta^{\vee}(y) = (\beta^{\vee}(y), \beta^{\vee}(y))$. Observe that there is an involution $$\iota = id_M \times (-id_{S^{\infty}}) : M \times S^{\infty} \rightarrow M \times S^{\infty}.$$ We fix $x \in H^*(M)$. To calculate $Q \mathcal{S}(x)$, we begin by choosing a sequence of pairs $(\chi, \mu_i: \chi_i \rightarrow M \times S^i)_{i=0}^{\infty}$ where $\chi$ is a smooth manifold, $\chi_i = \chi \times S^i$ and $\mu_i$ is a smooth map. The $\mu_i: \chi_i \rightarrow M \times S^{i}$ satisfy:

		\begin{enumerate}
		\item For $\pi_{2}: M \times S^i \rightarrow S^i$ the second projection, $\pi_{2}(\mu_i(x,v)) = v$ for all $(x,v) \in \chi_i$.
		\item The restriction $\mu_i |_{\chi_j} = \mu_j$ for $j \le i$.
		\item For $\pi_{1}: M \times S^i \rightarrow M$ the first projection, for any $v \in S^{i}$ then \begin{equation} \label{equation:alphav} \mu_v:= \pi_1 \circ \mu|_{\chi \times \{ v \}} : \chi_v := \chi \times \{ v \} \rightarrow M \end{equation} is a pseudocycle representative of $A$ in $M$ (and is well defined by (2) above).
		\item For $b \in \mathcal{B}$, the intersection \begin{equation} \label{tripleintersection}  (\beta^{\vee} \times \beta^{\vee})(Y_b) \times S^{i}) \cap \euscr{W}(\chi_i) \end{equation} is transverse in $M \times M \times S^{i}$, where $\euscr{W}: \chi \times \chi \times S^i \rightarrow (M \times M) \times S^i$ is defined by $(x, x' ,v) \mapsto (\mu_i(x,v), \mu_i(x',-v), v)$.
		\end{enumerate}

The previous construction using Morse flowlines in fact demonstrates that it is possible to choose such pseudocycles. Fixing $i \ge 0, \ A \in \Gamma, \ x \in H^*(C),$ and $Y \in H_*(C, \partial C)$, we choose $X_v := \mu_v(v, \chi)$ as above, and define a moduli space:

\begin{defn}
\label{defn:eqpopqss}
$$\mathcal{M}_{i,A}(x,Y) = \left\{ (v,u) \Biggr\rvert \begin{array}{l} v \in S^i, \ u: S^2 \rightarrow C, \ u_*[S^2] = A, \\ u \text{ is } J_{v,z} \text{-holomorphic}, \ u(0) \in Y, \\ u(1) \in X_v, \ u(\infty) \in X_{-v} \end{array} \right\}.$$
\end{defn}

For generic choices of data, if $|Y| = 2|x| - i - 2c_1(A)$ then $\mathcal{M}_{i,A}(x,Y)$ is a $0$-dimensional manifold. Define $Q \mathcal{S}_{i,A}(x) \in H^{2|x|-i-2c_1(A)}(C)$ by $\int_Y Q \mathcal{S}_{i,A}(x) = \# \mathcal{M}_{i,A}(x,Y)$ for all $Y \in H_{2|x|-i-2c_1(A)}(C, \partial C)$.

\begin{defn}[The Quantum Steenrod Square]
$$Q \mathcal{S}: H^*(C) \rightarrow QH^{2*}(C)[[h]], \ Q \mathcal{S}(x) = \sum_{i \ge 0, \ A \in \Gamma} Q \mathcal{S}_{i,A}(x) h^i q^A.$$

We extend this over $\Lambda$ by requiring $$Q \mathcal{S}\left(\sum_{A \in \Gamma} x_A q^A \right) = \sum_{A \in \Gamma} Q \mathcal{S} (x_A) q^{2A}.$$
\end{defn}

For $a \in H^*(M)$,
		\begin{equation} \label{equation:somepropsforqs} Q\mathcal{S}_{i,0}(a) = Sq^{|a|-i}(a) \text{ and } \sum_{A \in H_2(M)} Q\mathcal{S}_{0,A}(a) q^A = a * a. \end{equation}

\begin{rmk}[Transversality of moduli spaces]
\label{rmk:transversesq}

Suppose that the space $\mathcal{M}_i(A,J_{v,z})$ of pairs $(v,u)$ is defined by $v \in S^i$, the map $u: S^2 \rightarrow M$ is simple and $J_{v,z}$-holomorphic, and $u_*[S^2] = A$. Assume for now that $\mathcal{M}_i(A,J_{v,z})$ is a smooth manifold. Recall that there are evaluation maps $ev_z$, evaluating $u$ at $z$, and a forgetful map $\pi_{S^{\infty}}$ mapping to the $v$ factor. Then the evaluation map $ev_0 \times ev_1 \times ev_{\infty} \times \pi_{S^{\infty}}: \mathcal{M}_i(A,J_{v,z}) \rightarrow M \times M \times M \times S^{i}$ descends to a map on the quotient by $\mathbb{Z}/2$ (which is induced by the composition with the  map $z \mapsto z/(z-1)$ on $\mathcal{M}_i(A,J_{v,z})$) and further defines a pseudocycle, as in the classical case in \cite[Theorem 6.6.1]{jholssympl}. Suppose that  $a,b \in \text{crit} (f)$ are critical points of a fixed Morse function. For the coefficient of $b h^i$ in $Q \mathcal{S}(a)$, we count the intersection number of $ev_0 \times ev_1 \times ev_{\infty} \times \pi_{S^{\infty}}$ with another pseudocycle landing in $M \times ((M \times M) \times_{\mathbb{Z}/2} S^{i})$, which is the quotient by $\mathbb{Z}/2$ of the following $\mathbb{Z}/2$-equivariant pseudocycle: 
$$\euscr{W}^{i}(a, b) :  \overline{W^u(b,f)} \times \overline{W^s(a,f)} \times \overline{W^s(a,f)} \times S^i \rightarrow M \times ((M \times M) \times S^{i}),$$ where $\overline{W^u(b,f)}$ is the compactification of the unstable manifold of $b$ for the Morse function $f$ (similarly $W^s$ denotes the stable manifold). Details of this construction are in \cite[Lemma 4.5]{schwarzmorsesingiso}. There is further an evaluation map $E: \overline{W^u(b,f)} \rightarrow M$, such that $E$ is a pseudocycle. Let $\phi_{v,t}$ be the $1$-parameter family of diffeomorphisms defined for $v \in S^i$ and $t \ge 0$ by $$ \dfrac{d \phi_{v,t}}{dt}(s) = - \nabla f^2_{v,s} \text{ and } \phi_{v,0} = id. $$Then $$\euscr{W}^{i}(a, b)(y,x,x',v) := (E(y), \phi_{v,1}^{-1} E(x), \phi_{-v,1}^{-1} E(x'), v),$$ where the evaluation maps are all abusively denoted $E$. This encodes the $\mathbb{Z}/2$-equivariant incidence data associated to the Morse flowlines of $f_{v,s}$ (alternatively, the directly defined pseudocycles $\mu_i$ can be used in place of the evaluation maps). This is then a classical transversality problem, requiring us to make a generic choice of $f_{\cdot,0}: S^i \times M \rightarrow \mathbb{R}$, to ensure that our moduli spaces are cut out transversely as we required. It is immediate that this is an unconstrained transversality problem (i.e. not a $\mathbb{Z}/2$-equivariant one) because we observe that $\mathbb{Z}/2$-equivariant transversality for $v \in S^i$ is achieved when we ascertain nonequivariant transversality for $v \in D^{i,+}$, the upper $i$-dimensional hemisphere.

Hence it remains to determine whether the space $\mathcal{M}_i(A,J_{v,z})$ is cut out transversely. One follows the proof of \cite[Proposition 6.7.7]{jholssympl}, replacing $\mathcal{J}^l = \{ \{ J_{z} \} \}$ by $\mathcal{J}^l_i = \{ \{ J_{v,z} \} \}$ (these being Banach spaces of almost complex structures on $M$ of class $C^l$). Observe that the universal moduli space also has an extra factor that is $S^i$. This, being a finite dimensional manifold, does not affect the analytic outcome of the proof, hence one can find such a $J_{v,z}$ for each $i \in \mathbb{Z}_{\ge 0}$. However, in general we wish to choose a single $J_{v,z}$ ($v \in S^{\infty}$) that is sufficient for all $i$ simultaneously. To do this, observe that $\mathcal{J}^l_{i+1} \subset \mathcal{J}^l_{i}$ for each $i$. In fact given any $\{ J_{v,z} \} \in \mathcal{J}^l_{i}$, we may extend piecewise $C^l$ to some $\{ J_{v,z} \}$ for $v \in S^{i+1}$ (we pick a $C^l$ nullhomotopy of $\{ J_{v,z} \}$ in $\mathcal{J}^l$, and use this to extend to a $\{ J_{v,z} \}$ that is $C^l$ everywhere except perhaps along the ``equator" $S^i \subset S^{i+1}$). Using a $C^l$ approximation of this, we see that $\mathcal{J}^l_{i+1} \subset \mathcal{J}^l_{i}$ is in fact dense. Hence, using the Banach property of each $J^l_i$, we observe that the intersection of these nested sets is also dense in each $\mathcal{J}^l_{i}$, as required.
\end{rmk}

\begin{rmk}
	In Definition \ref{defn:mqssv2}, we could use either of the two moduli spaces listed because these correspond to singular and Morse cohomology of $\mathbb{RP}^{\infty}$ respectively. Using an appropriate Morse function $g$, these two definitions are equivalent in the same way as for finite dimensional manifolds, and we will use them interchangeably.
\end{rmk}

\begin{rmk}
\label{rmk:generalwm}

For general weakly monotone symplectic manifolds, there may be problems arising from multiply covered curves (this is avoided in the monotone case because such curves arise in families of codimension at least $2$, and in the exact case because there are no holomorphic curves). In light of this, one would need to add an inhomogeneous term to the equation $du \circ j = J \circ du$, resulting in something of the form $$(du - Y_{v,z}) \circ j = J \circ (du - Y_{v,z}),$$ where $v \in S^{\infty}$, $z \in S^2$ and $Y_{v,z} = Y_{-v, z/(z-1)}$. 
\end{rmk}

		\subsection{The PSS isomorphism}
		\label{subsec:pssisomprelim}

Recall that for some $H: M \rightarrow \mathbb{R}$ where $H$ is $C^2$-small and Morse, there is an isomorphism of rings due to Piunikhin, Salamon and Schwarz \cite[Example 3.3]{PSS}, \begin{equation} \label{equ:PSI} \Psi: QH^{*}(M) \rightarrow HF^*(H). \end{equation} 

We pick an almost complex structure $J$ on $M$ and an interpolation $H_{s}$ for $s \in \mathbb{R}$, such that $H_s = H$ for $s$ near $-\infty$ and $H_s = 0$ for $s \ge 0$. Consider $\mathbb{C}$ as a disc with a cylindrical end: we begin with $\mathbb{C}^*$ parametrised as the cylinder with logarithmic coordinates, $$(s,t) \in \mathbb{R} \times S^1 \rightarrow e^{-2\pi(s +  i t) } \in \mathbb{C}^*.$$ 

Suppose $u: \mathbb{C}^* \rightarrow M$ satisfies \eqref{equation:perturbedJholoc} (using logarithmic coordinates). We define the {\textit geometric energy} of $u$ to be $E(u) = \int_{\mathbb{C}^*} |\partial_S u|^2_J ds ^ dt,$ where $| \cdot |_J$ is defined using the metric $\omega(-,J-)$ induced by the symplectic form $\omega$ of $M$ and the compatible almost-complex structure $J$. Using removal of singularities, $u$ is finite energy exactly when it extends to a smooth map $u : \mathbb{C} \rightarrow M$ satisfying a perturbed $J$-holomorphic equation \eqref{equation:perturbedJholoc}, such that ${\displaystyle \lim_{s \rightarrow - \infty}} u(s,t)$ is a Hamiltonian loop, and $u$ is $J$-holomorphic on the unit disc.

		Let $$\mathcal{L}_1 = \{ y: \mathbb{R} / \mathbb{Z} \rightarrow M | \dot{y} = X_H(y) \}.$$ If $y \in \mathcal{L}_1$ then the coefficient of $y$ in $\Psi(x)$ is the number of ``spiked discs" asymptoting to $y$ and intersecting $x$. These spiked discs are smooth maps $$u: \mathbb{C} \rightarrow M$$ where we parametrise $\mathbb{C}$ as above, satisfying \begin{equation} \label{equation:perturbedJholoc} \partial_s u + J (\partial_t u - X_{H_s} )= 0 \end{equation} and $u$ is $J$-holomorphic on the unit disc, with ${\displaystyle \lim_{s \rightarrow -\infty}} u(s,t) = y(t)$, and $u(0) \in PD(x)$: the notation means that we have fixed a generic chain representative $X$ of $PD(x)$ and require $u(0) \in X$. See Figure \ref{fig:pss}. Alternatively, having fixed a Morse function $f$ (abusing notation where here we let $x \in \text{crit}(f)$) we require that $u(0) \in W^s(x,f)$, the stable manifold of $x$ using $f$.

		\begin{figure}
			\input{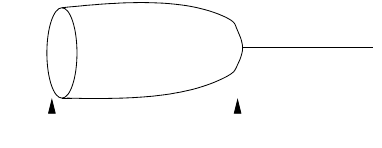_t}
			\caption{Spiked discs.}
			\label{fig:pss}
		\end{figure}
		
		\begin{defn}[The PSS Map]
		\label{defn:PSS}
			The PSS map $\Psi: QC^*(M) \rightarrow CF^*(H,J)$ is defined by $$\Psi(x) = \sum_{y \in \mathcal{L}_1} n_{y,x} \cdot y$$ where $n_{y,x} $ is the count of ``spiked discs" as above, with fixed parametrisation. This map induces a well-defined map on cohomology, and there is an inverse map (counting $J$-holomorphic discs parametrised by $(s,t) \mapsto e^{2\pi(s+ i t)}$).
		\end{defn}
			
		\begin{rmk}
			For Liouville domains, exactness implies $QH^*(M) \cong H^*(M)$. Hence in this case $\Psi: H^*(M) \rightarrow HF^*(H)$.
		\end{rmk}

	\begin{rmk}
		The Hamiltonian boundary condition in the definition of spiked discs can be thought of as an infinite dimensional analogue to the result in Morse theory, that (on a closed manifold), given a Morse flowline $u: \mathbb{R} \rightarrow M$, the limits of $u(s)$ as $s \rightarrow \pm \infty$ are Morse critical points.
	\end{rmk}

		\subsection{The $c^*$-map}
A Hamiltonian $H$ is radial at infinity if on the conical end (i.e. the region $\partial C \times [1,\infty)$), there is some $R$ such that $H(z,r) = \lambda_H \cdot r$, where $(z,r)$ are the coordinates on the conical end, and $r > R$. Let $H, H'$ be Hamiltonians that are radial at infinity with $\lambda_{H} < \lambda_{H'}$. There exist continuation maps $\Phi_{H,H'} : HF^*(H) \rightarrow HF^*(H')$, as in Section \ref{subsec:equivariantcontinuationmaps} (nonequivariant continuation maps in this section correspond to the $(i=0)$-term). We define $$SH^*(M) := \underset{H}{\varinjlim} \ HF^*(H),$$ where the direct limit is taken over Hamiltonians that are radial at infinity. For a $C^2$-small $H_0$, there exists the PSS isomorphism from Definition \ref{defn:PSS}, $\Psi: QH^*(M) \rightarrow HF^*(H_0)$. There is also a natural map $\xi : HF^*(H_0) \rightarrow SH^*(M)$ to the direct limit. This allows the following definition: \begin{defn} $$c^* := \xi \circ \Psi: QH^*(M) \rightarrow SH^*(M).$$ \end{defn}

\subsection{Equivariant compactness, gluing and bubbling}
\label{subsec:equivariantgluing}

We will describe equivariant compactness, and include some words about equivariant gluing. For a more detailed description of the parameter space and the various breakings of Morse flowlines on $S^{\infty}$, refer to Section 3 and Section (4b) after Addendum 4.6 of \cite{seidel} 

We denote by $\euscr{P}^{i,\sigma}$ the space of $w: \mathbb{R} \rightarrow S^{\infty}$ such that $w(-\infty) = v^{i,\sigma}$ and $w(\infty)= v^{0,+}$ (recalling the Morse function $g$ and the critical points $v^{i,\sigma}$ from Section \ref{subsec:equivHF}). These spaces may be compactified to $\overline{\euscr{P}}^{i,\sigma}$ by adding broken Morse trajectories, i.e. tuples $([w_0],...,[w_{j-1}],w_j, [w_{j+1}],...,[w_l])$ such that:
\begin{itemize}
\item $w_p: \mathbb{R} \rightarrow S^{\infty}$ for $p=0,...,l$, such that $\partial w_n / \partial t|_{t'} = - \nabla g(w_n(t'))$
\item $w_p(\infty) = w_{p+1}(-\infty)$ for all $p=0,1,\ldots l-1$.
\item $[w_p] = w_p / \mathbb{R} \in \euscr{Q}^{i,\sigma} := \euscr{P}^{i,\sigma} / \mathbb{R}$ for $p \neq j$ are unparametrised trajectories obtained by quotienting by the translation in $\mathbb{R}$. 
\end{itemize}
In particular, exactly one of the resulting ``pieces" of the broken Morse trajectory is parametrised. Note that it is possible for one of the $w_p$ to be constant at a critical point of $g$, assuming that this is the flowline that is parametrised. Observe also that the $\euscr{Q}^{i,\sigma}$ have similar compactifications $\overline{\euscr{Q}}^{i,\sigma}$ where all of the limiting flowlines are unparametrised. 

In general, given a sequence $[w^k,u^k] \in \mathcal{M}_{\text{eq}}^{i,\sigma}(y,x)$ (in which case we are using unparametrized flowlines on $S^{\infty}$, hence use parameter spaces $\euscr{Q}^{i,\sigma}$), there is a ``forgetful" map $\mathcal{M}_{\text{eq}}^{i,\sigma}(y,x) \rightarrow \euscr{Q}^{i,\sigma}$. One adds limit points in such a way as to respect the compactification. In particular, we add limit points in the following way:

\begin{itemize}
\item some subsequence of the $w^k$ converges to the limit $w^{\infty} = (w_0, \ldots , w_l)$ on compact subsets, as previously detailed.
\item some subsequence of the $u^k$ converges to a broken Floer trajectory $u^{\infty} = (u_0, ..., u_l)$ with respect to the complex structure $J_{v,z}$, with $u_n$ corresponding to (i.e. satisfying Equation \eqref{equation:jweq} with respect to) the $J^{w_n}_{s,t}$. There may also be legitimate (i.e. nonequivariant) Floer trajectories between $u_n$ and $u_{n+1}$ for some $n$. 
\end{itemize}

This defines a compactification $\overline{\mathcal{M}_{\text{eq}}^{i,\sigma}}(y,x)$. One can proceed similarly for a sequence $(w^k,u^k) \in \mathcal{M}^{i,\sigma}_{\text{prod}}(y,x)$, defining the compactification $\overline{\mathcal{M}^{i,\sigma}}_{\text{prod}}(y,x)$ of $\mathcal{M}^{i,\sigma}_{\text{prod}}(y,x)$ in such a way as to respect the compactification $\overline{\euscr{P}^{i,\pm}}$ of $\euscr{P}^{i,\pm}$. Instead of giving a full description, we will note below the cases that we will use. However, it should perhaps be noted that this is a parametrised version of the standard pair-of-pants breaking as in \cite{equivcompactsal}, consisting of a pair-of-pants with some collection of Floer trajectories attached at each infinite cyclindrical end. The parametrised element $w_j$ of $([w_0], \ldots [w_l])$ is then the flowline that is used for defining Equation \eqref{equation:eqPOP} of the central pair-of-pants.

The important situations that we consider in the compactification of the equivariant pair-of-pants case are the following: for the isolated points of $\mathcal{M}^{i,\sigma}_{\text{prod}}(y,x)$, there is a finite count. In the one-dimensional case, the limit points may only occur in the following ways:
\begin{itemize}
\item the sequence of pairs converges in $\mathcal{M}_{\text{prod}}^{i,\sigma}(y,x)$, so no new limit points are added.
\item there is convergence in $\euscr{P}^{i,\sigma}$ of the $w^k$ component (under the forgetful map), so a new Floer trajectory appears.
\item there is convergence in $\overline{\euscr{P}}^{i,\sigma}$, and for degree reasons $l=2$ above and no new Floer trajectories appear.
\end{itemize}
This shows that $\overline{P}^{i,\sigma}$ is a chain map: cf Equation \eqref{equation:eqpopchainmap}. Likewise, the compactification of $\mathcal{M}^{i,\sigma}_{\text{eq}}(y,x)$ yields that $$\sum_{j+k=i} d^j_{eq} \circ d^k_{eq} = 0,$$ where $d^i_{eq} = d^{i,+}_{eq} + d^{i,-}_{eq}$, and hence $d_{eq}$ is a differential.

We will not repeat the standard gluing argument. The equivariant gluing theorem involves two different gluing theorems working together. Consider a ``broken solution", in $\overline{\mathcal{M}_{\text{prod}}^{i,\sigma}}$ (where we have fixed some $x,y$ and omit from the notation). This is a pair consisting of a broken Morse flowline $w^{\infty} \in \overline{P}^{i,\sigma}$, alongside some $u^{\infty}$ such that $[w^{\infty}, u^{\infty}] \in \overline{\mathcal{M}^{i,\sigma}_{\text{prod}}} -\mathcal{M}_{\text{prod}}^{i,\sigma} $. Observe the following: one first defines an approximate glued solution nearby to $w^{\infty}$ in the parameter space $\overline{\euscr{P}}^{i,\sigma}$. Recall that there is a classical gluing theorem that one can apply for the parameter space of Morse flowlines $\overline{\euscr{P}}^{i,\sigma}$ in $S^i$. Then one can define an approximate parametrised pair-of-pants using this new, glued parameter: see the end of Section (4b) in \cite{seidel}, with reference given to the continuation maps in \cite[Lemma 3.12]{salamonfloer}. It remains possible to define a bounded right inverse, and one can still apply the contraction mapping theorem in the usual sense when the gluing parameter is sufficiently large (although the bound on the gluing parameter for the parametrised Floer trajectory will of course depend on a choice of glued flowline in $\overline{\euscr{P}}^{i,\sigma}$, which itself is subject to choosing a sufficiently large gluing parameter). Note that in this case, in general there are two gluing parameters: one assigned to the approximate Morse flowline in $S^i$ and one to the approximate parametrised pair-of-pants. For the particular cases as given above (i.e. the elements of the compactified moduli space as given in the bullet points), we can see that we in fact only obtain a single gluing parameter. In particular, \begin{itemize}
\item in the first case the gluing is unecessary (as we are not considering a broken solution),
\item in the second case one does not need to consider a gluing for the parameter space, because the flowline is not broken, so one obtains an approximate parametrised pair-of-pants. One then uses the contraction mapping theorem applied to the pair consisting of the honest Morse flowline in $S^i$ and the approximate parametrised pair-of-pants to deduce that there is an actual solution nearby. 
\item in the third case there are no broken parametrised pair-of-pants, but there is a broken Morse flowline. Hence one defines an approximate glued flowline in $\overline{\euscr{P}}^{i,\sigma}$. Then one uses the contraction mapping theorem, where the subtlety is that the pair now consists of an approximate glued flowline in $S^{i}$ and an approximate parametrised pair-of-pants, (which is only an approximate solution because the parameter that we are using has changed, and hence the new pair-of-pants may not exactly satisfy the required equation using the approximate Morse flowline).
\end{itemize}
All other cases are of codimension strictly greater than $1$, hence do not appear in moduli spaces of dimension at most $1$. Thus we observe that in all three cases there is exactly one gluing parameter, yielding the one-dimensional family of solutions.

To give a brief note on bubbling, one deals with bubbling in the same way as one does classically. In the exact case there is no problem because there do not exist any $J$-holomorphic spheres. In the monotone case, solutions involving bubbles arise in families of codimension at least $2$. As we only consider zero and one dimensional moduli spaces in our applications (we either count the number of elements of a zero dimensional moduli space to define an operation, or we consider the endpoints of a $1$-dimensional moduli space to show that two operations are the same), for generic choices such bubbling solutions cannot appear.

\section{Equivariant continuation maps}
\label{subsec:equivariantcontinuationmaps}

Recall that for $H \le H'$ being radial at infinity, there exist continuation maps $\Phi_{H,H'}: HF^*(H) \rightarrow HF^*(H')$. See \cite[Section 3.4]{salamonfloer} for the case of a compact symplectic manifold, and \cite[Section 2.9]{ritternov} for the case where $M$ is open and convex at infinity. These satisfy three conditions:

\begin{enumerate}
\item A generic homotopy of the data relative to the endpoints induces a chain homotopy on $CF^*(H)$.
\item $\Phi_{H,H'} = \Phi_{H'',H'} \circ \Phi_{H,H''}$ for any $H \le H'' \le H'$.
\item $\Phi_{H,H} = id$.
\end{enumerate}

These continuation maps are constructed by counting $s$-perturbed Floer trajectories. The $s$-perturbation changes the dimension of the moduli space that we consider (there is no longer an $\mathbb{R}$-action by translating $s$). It is important that $H \le H'$ for these continuation maps to exist in general, although for closed monotone symplectic manifolds there are continuation maps between any two Hamiltonians.

Let $H \le H'$ be radial at infinity. Pick $H_s$ for $s \in (-\infty,\infty)$ and $S_0 \in \mathbb{R}_{>0}$ such that $H_s = H'$ for $s \le -S_0$ and $H_s = H$ for $s \ge S_0$, where there is some $R \ge 1$ such that $\partial_s H_s(r,m) \le 0$ for $r \ge R$ on the conical collar of $M$. Let $i \in \mathbb{Z}_{\ge 0}$, and $x \in CF^*(2 \cdot H)$ and $y \in CF^*(2 \cdot H')$. Define $\mathcal{M}_{\text{cont}}^{i,\sigma}(y,x)$ to be the set of pairs $(w,u)$ where $w$ is a $- \nabla g$ flowline on $S^\infty$ with $w(-\infty) = v^{i,\sigma}$ and $w(\infty) = v^{0,+}$ and $u: \mathbb{R} \times \mathbb{R}/2 \mathbb{Z} \rightarrow M$ satisfying $$ \partial_s u + J_{eq,w(s),t} \partial_t u = - \nabla H_s,$$ with $u(s,t) \rightarrow x(t)$ as $s \rightarrow \infty$ and 

$$u(s,t) \rightarrow \begin{cases} \begin{array}{ll}
				y(t) \text{ if } \sigma = +\\
				y(t+1) \text{ if } \sigma = -\\
                \end{array} \end{cases} \text{ as } s \rightarrow -\infty,$$

\begin{defn}[Equivariant continuation maps]
$$\Phi^{i,\sigma}_{eq, H, H'}: HF^*_{eq}(2 \cdot H) \rightarrow HF^{*-i}_{eq}(2 \cdot H'),$$

$$\Phi^{i,\sigma}_{eq,H, H'}\left(\sum_{j \ge 0} x_j h^j \right) = \sum_{j \ge 0} \sum_{y \in \mathcal{L}} \# \mathcal{M}_{\text{cont}}^{i,\sigma}(x_j,y) \cdot y \cdot h^{j}.$$
Yielding

$$\Phi_{eq,H, H'}: HF_{eq}^{*} (2 \cdot H) \rightarrow HF_{eq}^{*} ( 2\cdot H'), \quad \Phi_{eq,H, H'} = \sum_{i, \sigma} h^i \Phi^{i,\sigma}_{eq,H, H'}.$$
\end{defn}

The fact that this is well defined needs the equivariant compactness and gluing theorem as in Section \ref{subsec:equivariantgluing}, to deduce that $$\sum_{k=i+j, \ \sigma_1 \cdot \sigma_2 = +} \phi^{i,\sigma_1}_{eq,H,H'} \circ d_{eq}^{j, \sigma_2} = \sum_{k=i+j, \ \sigma_1 \cdot \sigma_2 = +} d_{eq}^{i, \sigma_1} \circ \phi^{j,\sigma_2}_{eq,H,H'},$$ for all $k \in \mathbb{Z}_{\ge 0}$, and similarly for $\sigma_1 \cdot \sigma_2 = -$.

\begin{lemma}
\label{lemma:eqcontinmaps}
As with nonequivariant continuation maps:
\begin{enumerate}
\item A generic homotopy of the data, subject to $J^w_{s,t}$ satisfying conditions (1), (2) and (3) in Section \ref{subsec:equivHF}, induces a chain homotopy on $CF^*_{eq}(2 \cdot H)$.
\item $\Phi_{eq,H,H'} = \Phi_{eq,H'',H'} \circ \Phi_{eq,H,H''}$ for any $H \le H'' \le H'$.
\item $\Phi_{eq,H,H} = id$.
\end{enumerate}
\end{lemma}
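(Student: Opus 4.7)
The plan is to establish each of the three properties by adapting the standard arguments for nonequivariant continuation maps, with the extra parameter space $S^{\infty}$ handled by the equivariant compactness and gluing machinery from Section \ref{subsec:equivariantgluing}. All three statements are chain-level (or at worst chain-homotopy-level) assertions about the chain map $\Phi_{eq,H,H'} = \sum_{i,\sigma} h^i \Phi^{i,\sigma}_{eq,H,H'}$, so the arguments proceed one coefficient at a time in the formal variable $h$.

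For property (1), consider a generic $1$-parameter family $(H_{s,\lambda}, J^{w,\lambda}_{s,t})$ for $\lambda \in [0,1]$ interpolating two admissible choices of equivariant continuation data (with $J^{w,\lambda}_{s,t}$ still satisfying the three conditions from Section \ref{subsec:equivHF}). The parametric moduli space $\bigsqcup_{\lambda \in [0,1]} \mathcal{M}^{i,\sigma}_{\text{cont},\lambda}(y,x)$ is, for generic data, a smooth manifold one dimension higher than $\mathcal{M}^{i,\sigma}_{\text{cont}}(y,x)$. Its compactification via Section \ref{subsec:equivariantgluing} has boundary consisting of endpoint contributions at $\lambda=0,1$, Floer trajectory breakings at either asymptotic end of $u$, and breakings of $w$ in $\overline{\euscr{P}}^{i,\sigma}$ into $(w_1,w_2)$. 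A signed count of the zero-dimensional boundary components yields, for each fixed $i$ and $\sigma$, the chain homotopy identity
\begin{equation*}
\Phi^{i,\sigma}_{eq,H,H',1} - \Phi^{i,\sigma}_{eq,H,H',0} = \sum_{j+k=i,\,\sigma_1\sigma_2=\sigma}\!\!\!\!\bigl( d^{j,\sigma_1}_{eq} \circ K^{k,\sigma_2} + K^{j,\sigma_1} \circ d^{k,\sigma_2}_{eq} \bigr),
\end{equation*}
where $K^{i,\sigma}$ is the count of the 1-dimensional parametric moduli.

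For property (3), take $H_s \equiv H$ and observe that at $i=0,\sigma=+$ the Morse flowline $w$ is constant at $v^{0,+}$, so $J^w_{s,t} = J$ is $s$-independent and the continuation equation becomes the standard Floer equation; by the usual argument, $\Phi^{0,+}_{eq,H,H}$ is chain-homotopic to the identity. The higher-order coefficients $\Phi^{i,\sigma}_{eq,H,H}$ for $i\ge 1$ we handle by invoking (1): the constant homotopy can be deformed to any other admissible choice, and picking data that is symmetric under the shift $\tau$ together with the $-v$ involution, one sees that the constant-homotopy contributions for $i\ge 1$ can be canceled by a chain homotopy built from the parametric moduli space over a contracting family.

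For property (2), use the classical neck-stretching family $H_s^R$ for $R\ge 0$, where $R=0$ reproduces a direct continuation from $H$ to $H'$ and $R\to\infty$ degenerates into two continuations glued at $H''$. Form the $1$-parameter equivariant moduli space $\bigsqcup_{R\ge 0}\mathcal{M}^{i,\sigma}_{\text{cont},R}(y,x)$; the endpoint at $R=0$ contributes $\Phi^{i,\sigma}_{eq,H,H'}$, while the limit $R\to\infty$ breaks each solution $u$ at the neck and simultaneously forces $w\in \euscr{P}^{i,\sigma}$ to split (under the forgetful map and the equivariant compactification of Section \ref{subsec:equivariantgluing}) as a broken flowline $(w_1,w_2)$ with $w_1\in\euscr{P}^{j,\sigma_1}, w_2\in\euscr{P}^{k,\sigma_2}$ satisfying $j+k=i$, $\sigma_1\sigma_2=\sigma$. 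Summing over such decompositions yields precisely the coefficient of $h^i$ in $\Phi_{eq,H'',H'} \circ \Phi_{eq,H,H''}$, and the other boundary components (Floer breakings and Morse breakings at finite $R$) contribute the chain homotopy.

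The main obstacle is step (2): one must align the neck-stretching of $u$ with the breaking of $w$, i.e.\ show that in the $R\to\infty$ limit the Morse flowline $w\in \euscr{P}^{i,\sigma}$ necessarily breaks into exactly two pieces at the intermediate level set (this is where the two gluing parameters of Section \ref{subsec:equivariantgluing} collapse to a single admissible family), and that the combinatorics of these breakings reproduces the algebraic formula for composition $\Phi_{eq,H'',H'} \circ \Phi_{eq,H,H''} = \sum_{i,\sigma} h^i \sum_{j+k=i,\,\sigma_1\sigma_2=\sigma} \Phi^{j,\sigma_1}_{eq,H'',H'}\circ \Phi^{k,\sigma_2}_{eq,H,H''}$. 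Given the equivariant compactness and gluing already set up in Section \ref{subsec:equivariantgluing}, this is a bookkeeping exercise using the same techniques as in Equation \eqref{equation:eqpopchainmap}.
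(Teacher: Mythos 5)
Your treatment of (1) follows the paper's verbatim appeal to the nonequivariant argument of Salamon, and your approach to (2), via a neck-stretching family $H_s^R$ and the $R\to\infty$ limit producing the composite, is the standard parametric dual of the paper's argument (which instead \emph{fixes} a sufficiently large gluing parameter $\lambda > S_0$, uses the gluing bijection of Section \ref{subsec:equivariantgluing} to obtain a chain-level identity for that specific choice of data, and then invokes (1) to pass to an arbitrary choice); both routes are sound.

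The proof of (3), however, has a genuine gap. You correctly observe that for $i=0,\sigma=+$ the flowline $w$ is constant at $v^{0,+}$, so condition (2) on $J_{eq,v}$ forces $J^w_{s,t}=J_t$ and the $s$-translation argument gives the identity in that coefficient. But for $i\ge 1$ (and also for $i=0,\sigma=-$) you only offer that the contributions ``can be canceled by a chain homotopy built from the parametric moduli space over a contracting family,'' without saying what this family is or why it is admissible. In fact no such homotopy argument is needed, and your sketch is unlikely to go through as stated: a contracting family toward $v$-independent data would lead one into $\mathcal{J}_1$, where regularity is not guaranteed (as noted in the paper's remark after the definition of $d_{eq}$). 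The correct and much shorter argument is structural: when $H_s\equiv H$, the continuation moduli space $\mathcal{M}^{i,\sigma}_{\text{cont}}(y,x)$ of pairs $(w,u)$ coincides set-theoretically with the \emph{unquotiented} moduli space underlying $\mathcal{M}^{i,\sigma}_{eq}(y,x)$, and it carries a free $\mathbb{R}$-action translating $w$ and $u$ \emph{simultaneously} in $s$. This action exists for every $i$, not just $i=0$, precisely because $J^w_{s,t}=J_{eq,w(s),t}$ only depends on $s$ through $w(s)$. Hence any solution nonconstant in $s$ lies in a positive-dimensional family and cannot be rigid; the rigid solutions are constants, which force $w$ constant at $v^{0,+}$ (so $i=0$, $\sigma=+$) and $u$ constant at a Hamiltonian loop. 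This gives $\Phi^{i,\sigma}_{eq,H,H}=0$ for $(i,\sigma)\neq(0,+)$ and $\Phi^{0,+}_{eq,H,H}=\mathrm{id}$ \emph{on the nose} for this choice of data, after which (1) upgrades the conclusion to an arbitrary admissible choice.
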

\begin{proof}
The proof of $(1)$ is the same as in the nonequivariant case: see \cite[Lemma 3.12]{salamonfloer}. For $(2)$ we use equivariant gluing for the right hand side of the equality, as in Section \ref{subsec:equivariantgluing}, using as our data associated to a sufficiently large gluing parameter $\lambda > S_0$ being (for $v \in S^{\infty}$ and $(s,t) \in \mathbb{R} \times \mathbb{R}/\mathbb{Z}$):
\begin{itemize}
\item $J_{v,s,t} = J_{eq,v,t}$.
\item $H_{s,\lambda}= H^1_{s+\lambda}$ for $s \le 0$ and  $H_{s,\lambda}= H^2_{s-\lambda}$ for $s \ge 0$, where $H^1_s$ is the $s$-dependent Hamiltonian used for $ \Phi_{eq,H'',H'}$ and $H^2_s$ is the $s$-dependent Hamiltonian used for $\Phi_{eq,H,H''}$. 
\end{itemize}
This is well defined and $H_{s,\lambda}$ is smooth because $\lambda > S_0$. We then appeal to $(1)$ to allow a different choice of equivariant data on the left hand side of the equality.

To prove $(3)$ we use $(1)$ and note that we may choose $H_s \equiv H$, and then solutions that are not constant at a Hamiltonian loop come in an $\mathbb{R}$-family by translating in $s$, hence are nonisolated.
\end{proof}

\begin{rmk}[Equivariant Auxiliary Data]
\label{rmk:equancdat}
When constructing quantum cohomology, there are auxiliary data in the form of the almost complex structure $J$ and the (perturbed) Morse functions $f^p_s$ for $p=1,2,3$. When constructing Floer cohomology the auxiliary data comprises of an almost complex structure $J$ and the Hamiltonian $H$, along with a small perturbation of $H$. To make these concepts equivariant, we need to introduce a dependency of this data on $v \in S^{\infty}$. However, this dependency may not need to be applied to all of the ancillary data. All that one requires is that ``sufficiently much" of the data is chosen with a $v$-dependence, and the choices we make are generic. 
\end{rmk}

\section{The Equivariant PSS isomorphism}
\label{sec:eqpssisom}

		The PSS isomorphism $\Psi$ of Definition \ref{defn:PSS} extends to an isomorphism $$\Psi_{eq} : QH^*(M)[[h]] \rightarrow HF^{*}_{eq} (2 \cdot H)$$ when $H$ is $C^2$-small. Recall the spiked discs used in the definition of $\Psi$. We will replace these with ``$i$-equivariant spiked discs". We parametrize $\mathbb{C}$ away from $0$ by logarithmic coordinates $e^{-\pi(2s+i t)}$ for $(s,t) \in (-\infty, \infty) \times \mathbb{R}/2 \mathbb{Z}$ as in Section \ref{subsec:pssisomprelim} (note we use $\pi i t$ because our cylinder has circumference $2$). Fixing an almost complex structure $J$ on $M$, for $(v,z) \in S^{\infty} \times \mathbb{C}$, pick almost complex structures $J^{\Psi}_{v,z}$ on $M$ with the conditions that: 
		\begin{enumerate}
			\item $J^{\Psi}_{v,s,t} = J^{\Psi}_{v,s,t+2}$.
			\item $J^{\Psi}_{v,s,t} = J$ for $s \ge 0$. 
			\item $J^{\Psi}_{-v,s,t} = J^{\Psi}_{v,s,t+1}$. 
			\item There is $S_0>0$ such that $J^{\Psi}_{v,s,t} = J_{eq,v,t}$ for $s \le -S_0$.
		\end{enumerate}	

		Given a flowline $w$ of $- \nabla g$ on $S^{\infty}$, we define $J^w_{\Psi,s,t} = J^{\Psi}_{w(s),s,t}$. Pick an interpolation $H_{s}$ for $s \in \mathbb{R}$ such that $H_s = H$ for $s$ near $-\infty$ and $H_s = 0$ for $s \ge 0$. As in the discussion in Remark \ref{rmk:equancdat}, we need not put any $v$ dependence on the $H_s$ assuming that the $J_v$ are generic. 

		\begin{defn}[$i$-equivariant spiked discs]
		Given $x \in \text{crit}(f)$ and $y \in \mathcal{L}$, an $i$-equivariant spiked disc from $x$ to $y$ is a triple $(w,u, \alpha)$ where $w: \mathbb{R} \rightarrow S^{\infty}$ is a $-\nabla g$ flowline from $v^{i,\sigma}$ to $v^{0,+}$ on $S^{\infty}$, the map $$u: \mathbb{C} \rightarrow M$$ is smooth, and $\alpha: [0,\infty) \rightarrow M$ is smooth, with:

		\begin{itemize}
\item ${\displaystyle \lim_{s \rightarrow -\infty}} u(s,t) = \begin{cases} \begin{array}{ll}
 		y(t) \text{ if } \sigma = + \\
		y(t+1) \text{ if } \sigma = - \\
                \end{array} \end{cases}$ 
\item $\partial_s u + J^w_{\Psi,s,t} (\partial_t u - X_{H_s}) = 0 \text{ on } \mathbb{C},$
\item $\dot{\alpha}(t) = - \nabla f(\alpha(t))$ and $\alpha(\infty) = x$.
\item $u(0) = \alpha(0)$.
		\end{itemize}		
		\end{defn}

		\begin{defn}
		\label{defn:psieq}
			$\Psi_{eq,i} : QC^*(M)[[h]] \rightarrow CF^*(2\cdot H)[[h]]$ is defined by $$\Psi_{eq,i}\left( \sum_{j \ge 0} x_j h^j \right) = \sum_{j \ge 0} \sum_{y \in \mathcal{L}} n_{y,x_j,i} \cdot y \cdot h^j$$ where $n_{y,x,i}$ counts isolated $i$-equivariant spiked discs from $x$ to $y$ as above, with fixed parametrisation. Define $$\Psi_{eq}(x) = \sum_{i \ge 0} \Psi_{eq,i}(x) \cdot h^{i}.$$
		\end{defn}

We will use this for the proof of Theorem \ref{thm:qsseqpopintertwine} in Section \ref{sec:qsandeqpopclosed}. Using gluing and compactness arguments as in Section \ref{subsec:equivariantgluing} and \cite[End of Section 4]{seidel} yields the equations $$\sum_{j+k=i} d^k_{eq} \circ \Psi_{eq,j}(x) = \Psi_{eq,i}(dx)$$ for isolated solutions, which shows that $\Psi_{eq}$ descends to a map on equivariant cohomology.

Recall a classical lemma:

		\begin{lemma}
		\label{lemma:classicalpowerserieslemma}
			Let $\Lambda$ be a ring. An element $\sum_{i \ge 0} \lambda_i h^i$ of $\Lambda[[h]]$ is invertible when $\lambda_0$ is invertible.
		\end{lemma}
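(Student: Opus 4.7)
The plan is to build the inverse coefficient-by-coefficient. Write $f = \sum_{i \ge 0} \lambda_i h^i$ and look for $g = \sum_{j \ge 0} \mu_j h^j$ with $fg = 1$ in $\Lambda[[h]]$. Expanding and comparing coefficients of $h^n$ gives the system $\lambda_0 \mu_0 = 1$ and, for $n \ge 1$,
$$\sum_{i=0}^{n} \lambda_i \mu_{n-i} = 0.$$
First I would set $\mu_0 = \lambda_0^{-1}$, using the hypothesis that $\lambda_0$ is invertible. Then, proceeding by induction on $n$, I would solve the $n$-th equation for $\mu_n$ as
$$\mu_n = -\lambda_0^{-1} \sum_{i=1}^{n} \lambda_i \mu_{n-i},$$
which is well defined because $\mu_0, \dots, \mu_{n-1}$ are already known and the sum on the right has only finitely many terms. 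The resulting $g \in \Lambda[[h]]$ satisfies $fg = 1$ by construction.

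The key structural point is that each coefficient of a product in $\Lambda[[h]]$ involves only finitely many coefficients of the factors, so the inductive step is a finite computation at every stage even though $g$ itself is an infinite sum; this is precisely what separates $\Lambda[[h]]$ from $\Lambda[h]$, where one needs the whole series $\sum_{k \ge 0} (1 - \lambda_0^{-1} f)^k \lambda_0^{-1}$ to make sense. There is no real obstacle; the only bookkeeping subtlety is that for noncommutative $\Lambda$ one must perform the symmetric construction on the other side to obtain a left inverse and then note that the two coincide by associativity, but for the commutative Novikov-type rings used elsewhere in the paper this refinement is unnecessary.
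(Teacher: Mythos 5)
Your proof is correct, and it takes a slightly different (though closely related) route from the paper's. The paper factors $f = \lambda_0\bigl(1 + b\bigr)$ with $b = \sum_{i \ge 1}\lambda_0^{-1}\lambda_i h^i$, and then invokes the formal geometric series (``Taylor expansion'') $(1+b)^{-1} = \sum_{k \ge 0}(-b)^k$, which converges $h$-adically since $b$ has positive $h$-valuation. You instead solve the system of equations coefficient-by-coefficient by induction on $n$, yielding the recursion $\mu_n = -\lambda_0^{-1}\sum_{i=1}^n \lambda_i\mu_{n-i}$. These are really two presentations of the same underlying computation — unwinding the geometric series gives exactly your recursion — but yours is more elementary in that it never needs to argue that an infinite sum of power series is well-defined, only that each individual coefficient is a finite expression in previously determined data; the paper's version is more compact and gives a closed formula for the inverse at the cost of one extra observation about convergence. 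Your side remark about the noncommutative case is also correct: a right inverse from $fg=1$ and a left inverse from $g'f=1$ coincide by associativity, though this refinement is unneeded here since the Novikov rings in play (over $\mathbb{Z}/2$) are commutative.
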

		\begin{proof}
			We write $$\sum_{i \ge 0} \lambda_i h^i = \lambda_0 \left( 1+ \sum_{i \ge 1} \lambda_0^{-1} \lambda_i h^i \right),$$ and a Taylor expansion of $(1+ b)^{-1}$ exists, where $b = \sum_{i \ge 1} \lambda_0^{-1} \lambda_i h^i$.
		\end{proof}

		\begin{lemma}
			$\Psi_{eq}$ is an isomorphism.
		\end{lemma}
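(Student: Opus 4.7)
The plan is to treat $\Psi_{eq}$ as a chain map between $h$-filtered complexes and apply an analog of Lemma \ref{lemma:classicalpowerserieslemma}. Both the source $(QC^*(M)[[h]], d_{QC})$ (with trivial $h$-dependence in the differential, since equivariant quantum cohomology uses the trivial $\mathbb{Z}/2$-action) and the target $(CF^*(2 \cdot H)[[h]], d_{eq})$ are $h$-adically complete and Hausdorff. By construction $\Psi_{eq} = \sum_{i \ge 0} \Psi_{eq,i}\, h^i$ acts termwise on power series in $h$, so it is $\Lambda[[h]]$-linear and respects the $h$-adic filtrations.

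The key step is to identify $\Psi_{eq}$ modulo $h$ with the classical PSS chain map for $2 \cdot H$. The coefficient $\Psi_{eq,0}$ counts $0$-equivariant spiked discs, defined via a $-\nabla g$ flowline $w:\mathbb{R} \to S^{\infty}$ from $v^{0,\sigma}$ to $v^{0,+}$. Since $v^{0,+}$ is a minimum of $g$, the $\sigma = -$ moduli space is empty, and the $\sigma = +$ case forces $w$ to be the constant flowline at $v^{0,+}$. The data $J^{w}_{\Psi,s,t} = J^{\Psi}_{v^{0,+},s,t}$ together with the interpolation $H_s$ then reduce the equation defining a $0$-equivariant spiked disc to the ordinary PSS equation for the Hamiltonian $2 \cdot H$. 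Therefore $\Psi_{eq,0}$ realises the classical PSS chain map of Section \ref{subsec:pssisomprelim}, which is known to be a quasi-isomorphism.

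To conclude, I would invoke the standard spectral-sequence comparison theorem for filtered complexes: a filtered chain map between complete, Hausdorff $h$-filtered complexes which induces a quasi-isomorphism on associated graded pieces is itself a quasi-isomorphism. The associated graded of the target has differential $d$ (the $h$-corrections to $d_{eq}$ drop out), so its $E_1$-term is $HF^*(2 \cdot H)[[h]]$; the source associated graded similarly has $E_1 = QH^*(M)[[h]]$; and the induced map is $\Psi \otimes \mathrm{id}_{\mathbb{Z}/2[[h]]}$, which is a quasi-iso. Equivalently, one may directly imitate the proof of Lemma \ref{lemma:classicalpowerserieslemma}, constructing an inverse order by order in $h$ using invertibility of $\Psi$ together with $h$-adic completeness, with injectivity obtained by the analogous $h$-adic descent argument.

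The main point requiring care is the identification in the second paragraph, namely that for $w \equiv v^{0,+}$ the auxiliary data $J^{\Psi}_{v^{0,+},s,t}$ and $H_s$ give a valid choice of data for defining the ordinary PSS map with target $CF^*(2 \cdot H)$. This is a matter of bookkeeping, straightforward from conditions (1)--(4) placed on $J^{\Psi}_{v,z}$ and the flexibility in choosing auxiliary data discussed in Remark \ref{rmk:equancdat}.
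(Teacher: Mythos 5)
Your proposal is correct and follows essentially the same route as the paper: identify $\Psi_{eq,0}$ with the classical PSS quasi-isomorphism (by noting that for $i=0$ only the constant flowline at $v^{0,+}$ contributes), and then conclude by $h$-adic completeness, either via the filtered-complex spectral-sequence comparison or the direct order-by-order inversion you mention as the alternative. The paper condenses this to ``$\Psi_{eq,0}=\Psi$, use Lemma~\ref{lemma:classicalpowerserieslemma}''; your version makes explicit the two points being glossed over there, namely why the $\sigma=-$ contribution vanishes at $i=0$ and why the power-series inversion applied to a chain map of $h$-adically complete complexes does yield an inverse quasi-isomorphism.
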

		\begin{proof}
		
		Note $\Psi_{eq,0} = \Psi$ which is an isomorphism. Use Lemma \ref{lemma:classicalpowerserieslemma}. 
		\end{proof}

One can also construct the inverse map $\Psi_{eq}^{-1}$ directly, using reversed spiked discs as in Figure \ref{fig:psieqiso}(I). Specifically, we require almost complex structures $J^{R \Psi}_{v,z} \in \mathcal{J}_2$ on $M$ with the conditions that: 
		\begin{enumerate}
			\item $J^{R \Psi}_{v,s,t} = J$ for $s \le 0$.
			\item $J^{R \Psi}_{-v,s,t} = J^{R \Psi}_{v,s,t+1}$. 
			\item There is $S_0 >0$ such that, $J^{R \Psi}_{v,s,t} = J_{eq,v,t}$ for $s \ge S_0$.
		\end{enumerate}
We can likewise define $J^w_{R \Psi,s,t} = J^{R \Psi}_{w(s),s,t}$, and the definition of the reversed $i$-equivariant spiked discs is as in Definition \ref{defn:psieq}, i.e. for $x \in \mathcal{L}$ and $y \in \text{crit}(f)$, a reversed $i$-equivariant spiked disc is a triple $(w,u, \alpha)$ where $w: \mathbb{R} \rightarrow S^{\infty}$ is a $-\nabla g$ flowline from $v^{i,\pm}$ to $v^{0,+}$, and there are smooth maps $u: \mathbb{C} \rightarrow M$ and $\alpha: (-\infty, 0] \rightarrow M$. The map $\alpha$ satisfies $\dot{\alpha}(t) = - \nabla f(\alpha(t))$ and $\alpha(-\infty) = y$. Now we parametrise $\mathbb{C} - \{ 0 \}$ as $(s,t) \in \mathbb{R} \times \mathbb{R}/2\mathbb{Z} \mapsto e^{\pi(2 s+i t)}$, and $u$ satisfies:
\begin{itemize}
\item $\partial_s u + J^w_{R \Psi,s,t} (\partial_t u - X_{H_s}) = 0 \text{ on } \mathbb{C}$,
\item $u(0) = \alpha(0)$,
\item ${\displaystyle \lim_{s \rightarrow \infty}} u(s,t) = \begin{cases} \begin{array}{ll}
 		x(t) \text{ if } \sigma = + \\
		x(t+1) \text{ if } \sigma = - \\
                \end{array} \end{cases}$
\end{itemize}

To prove that the map $A$ counting reversed $i$-equivariant spiked discs as in Figure \ref{fig:psieqiso}(I) gives an inverse for $\Psi_{eq}$, use equivariant gluing to give setups as in Figure \ref{fig:psieqiso}(II)-(III) and argue as in the nonequivariant case.

\begin{lemma}
\label{lemma:inverseeqpsi}
$A = \Psi_{eq}^{-1}$.
\end{lemma}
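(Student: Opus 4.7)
The plan is to exploit that $\Psi_{eq}$ has already been shown to be an isomorphism by the preceding lemma: it therefore suffices to check that one of the compositions $\Psi_{eq} \circ A$ or $A \circ \Psi_{eq}$ is the identity. I would treat $\Psi_{eq} \circ A = id$ on $HF^*_{eq}(2 \cdot H)$, via a $\mathbb{Z}/2$-equivariant version of the standard PSS cobordism argument \cite{PSS}, in which the additional parameter $w : \mathbb{R} \to S^\infty$ is carried through consistently using the equivariant gluing of Section \ref{subsec:equivariantgluing}.

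Concretely, one concatenates a reversed $i$-equivariant spiked disc (asymptotic to $x \in \text{crit}(f)$ at $+\infty$) with an $i'$-equivariant spiked disc (asymptotic to $x$ at $-\infty$), summing over $i+i'$. Equivariant gluing along a $-\nabla f$-flowline with gluing parameter $R \gg 0$ produces a family of cylinders satisfying an equivariant perturbed Cauchy--Riemann equation in which the Hamiltonian vanishes on a bounded $s$-window. One then considers the 1-dimensional parametrised cobordism obtained by simultaneously letting $R$ run over $[R_0, \infty]$ and homotoping the $s$-dependent Hamiltonian profile to the constant profile $H$: one end of this cobordism computes $\Psi_{eq} \circ A$, while the other end is precisely the equivariant continuation map $\Phi_{eq, H, H}$ on $HF^*_{eq}(2 \cdot H)$, which equals the identity by Lemma \ref{lemma:eqcontinmaps}(3). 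Counting boundary points of the parametrised 1-dimensional moduli spaces in this cobordism modulo $2$ yields $\Psi_{eq} \circ A = id$ at the level of cohomology.

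The main obstacle is maintaining the $\mathbb{Z}/2$-equivariance conditions (for example $J^{\Psi}_{-v,s,t} = J^{\Psi}_{v,s,t+1}$ and the analogue for $J^{R\Psi}$) throughout the interpolation while simultaneously achieving transversality for every parametrised moduli space in the one-parameter family. As in Remark \ref{rmk:transversesq}, equivariant transversality reduces to a classical unconstrained transversality problem on the hemisphere $D^{i,+} \subset S^i$; since the space of admissible equivariant interpolating data between the glued structure and the constant-profile continuation data is contractible (one may take a convex combination in the space of almost complex structures subject to the symmetry constraints), a generic path through this space yields a regular 1-dimensional cobordism, and no analytic input beyond the equivariant compactness and gluing of Section \ref{subsec:equivariantgluing} is required.
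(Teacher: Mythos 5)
Your proposal is correct, but it departs from the paper's proof in a genuine, economical way. You observe that the preceding lemma already establishes that $\Psi_{eq}$ is an isomorphism (via $\Psi_{eq,0} = \Psi$ and Lemma~\ref{lemma:classicalpowerserieslemma}), so it suffices to verify a single composition, and you choose $\Psi_{eq}\circ A = \mathrm{id}$: equivariant gluing turns this into an equivariant continuation map from $H$ to itself, which is the identity by Lemma~\ref{lemma:eqcontinmaps}(3). This is exactly the argument the paper gives for the ``Figure~\ref{fig:psieqiso}(III) direction.'' What the paper does in addition, which you omit, is a direct check of $A\circ\Psi_{eq} = \mathrm{id}$ by a dimension argument ruling out nontrivial parametrised holomorphic spheres between stable and unstable manifolds after decoupling the Floer theory on $M$ from the Morse theory on $S^\infty$; under your framing this second verification is logically redundant, and your shortcut is a clean simplification. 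One minor technical remark: your parenthetical claim that you may take ``a convex combination in the space of almost complex structures subject to the symmetry constraints'' is not literally correct --- the $\omega$-compatible almost complex structures do not form a convex subset of the space of endomorphisms, since $J^2 = -\mathrm{id}$ is not preserved under affine interpolation. The space is indeed contractible (and the contraction can be chosen $\mathbb{Z}/2$-equivariantly), but one should instead appeal to the standard contractibility argument, or, as the paper does, observe that the sets of allowable homotopies $\euscr{J}_1$ and $\rho_*\euscr{J}_1$ are open and dense, so their intersection is nonempty and a generic element can be chosen from it. The conclusion of your argument is unaffected, but the stated justification for the key homotopy should be replaced by one of these.
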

\begin{proof}
We argue as in \cite[Theorem 4.1]{PSS}. After equivariant gluing, $A \circ \Psi_{eq}$ is counted by setups as in Figure \ref{fig:psieqiso}(II). A dimension argument, as in the nonequivariant case for weakly monotone symplectic manifolds, shows that there are not any nontrivial spheres. Specifically, for the coefficient of $y$ in the image of $x$ under the operation in (II), we count perturbed $J$-holomorphic spheres (i.e. the almost complex structure $J = J_{v,z}$ depends on $v \in S^{i}$ and $z \in S^2$, and the equation that $u$ satisfies is perturbed by a Hamiltonian). In particular, we count the number of perturbed holomorphic spheres intersecting the unstable manifold of $x$ and the stable manifold of $y$. However, after gluing we may homotope the $J_{v,z}$ (for $z \in S^2$) so that $J$ is independent of $v$. Neglecting transversality concerns momentarily, now we decouple the Floer theory on $M$ and the Morse theory on $S^{\infty}$. By the dimension argument as in \cite[8.1.4(ii)]{jhols}, we see that generically there are no such spheres (and as the set is empty, this data is trivially transverse even in the equivariant case). 

After gluing, $\Psi_{eq} \circ A$ is counted by setups as in Figure \ref{fig:psieqiso}(III). This is the equivariant continuation map from $H_0$ to $H_0$, hence the identity. The figures in (III) correspond to:
\begin{itemize}
\item taking a homotopy from the finite length Morse trajectory to a point,
\item gluing the two discs to give a cylinder,
\item homotoping the data so that the glued almost complex structure becomes $J_{v,s,t} = J_{eq,t}$ for all $s$.
\end{itemize}
For the homotopy in the third bullet point, we want to choose $J_{v,s,t,\eta}$ for $\eta \in [0,1]$ such that $J_{v,s,t,0} = J_{v,s,t}$ and $J_{v,s,t,1} = (J_{eq,v})_t$ such that $J_{-v,s,t+1,\eta} = J_{v,s,t,\eta}$ for all $\eta$. We prove that we can do this, in the same way as Lemma \ref{lemma:lemPQQS}. Specifically, let $\euscr{J}_1 \subset \euscr{J}$ be the space of such allowable homotopies inside the space of all homotopies from $J_{v,s,t}$ to $J_{eq,v}$. If $\rho_*$ is the action on $\euscr{J}$ induced from $\mathcal{J}_2$ then as $\euscr{J}_1$ and $\rho_* \euscr{J}_1$ are open and dense in $\euscr{J}$, so is their intersection. Choose a homotopy from this intersection.
\end{proof}

		\begin{figure}
			\input{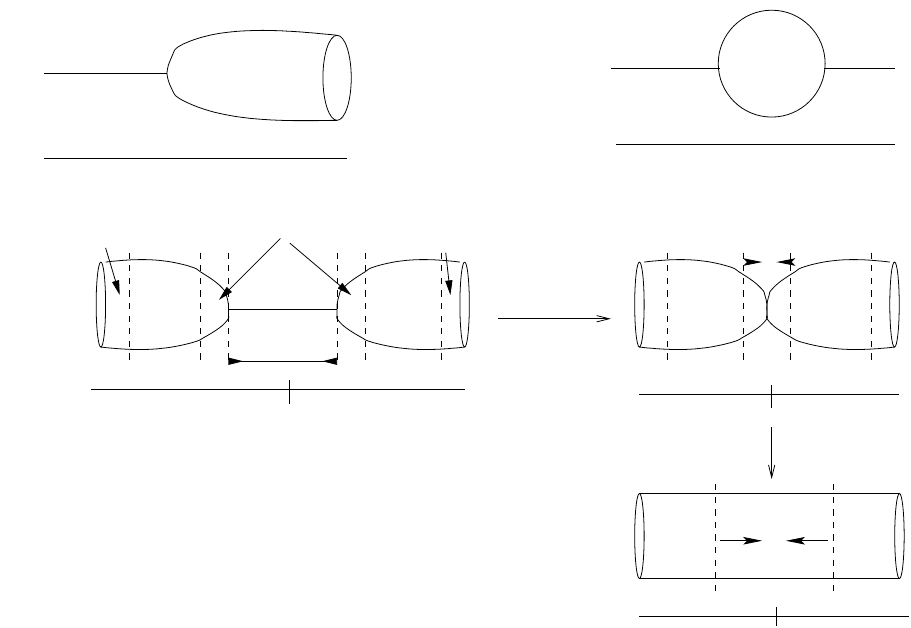_t}
			\caption{Reversed $i$-equivariant spiked discs and gluing.}
			\label{fig:psieqiso}
		\end{figure}

\begin{rmk}
The argument in \cite[8.1.4(ii)]{jhols}, as referenced in Lemma \ref{lemma:inverseeqpsi} for the gluing in (II), is a rigorous proof of the intuitive argument that ``a pseudoholomorphic sphere with two marked points is unstable, even for a domain dependent almost complex structure".
\end{rmk}

		\begin{rmk}
		To choose a $J^{\Psi}_{v,z}$ as above, we must choose a smooth nullhomotopy $J^{\lambda}_{v,t}$ for $\lambda \in [-S_0 - 1, 1]$ from $J_{eq,v,t}$ to $J$, satisfying $J^{\lambda}_{v,z} = J_{eq,v,t}$ for $\lambda \in [-S_0 -1, -S_0]$ and $J^{\lambda}_{v,z} = J$ for $\lambda \in [0, 1]$. Then we can define $J^{\Psi}_{v,s,t} = J^{s}_{v,t}$ for $s \in  [-S_0 - 1, 1]$, and extend appropriately. One can proceed iteratively, because the map $v \in S^i \mapsto (J_{eq,v,t})$ is an element of $\pi_i$ of the space of compatible $J$, which is contractible. Thus we get a filling disc for each such $v \in S^i \mapsto (J_{eq,v,t})$, and the radial coordinate of this filling disc provides the $\lambda$ coordinate of the homotopy $J^{\lambda}_{v,t}$ (with radius $0$ corresponding to $\lambda = 0$). Then one perturbs this so that $J^{\Psi}_{v,z}$ depends smoothly on its factors. 
		\end{rmk}

\section{$Q\mathcal{S}$ and the Equivariant pair-of-pants for a $C^2$-small Hamiltonian}
\label{sec:qsandeqpopclosed}

		\begin{proof}[Proof of Theorem \ref{thm:qsseqpopintertwine}]
		
		For $x_{\pm} \in \text{crit}(f)$, then the coefficient of $y \in \text{crit}(f)$ in $\Psi_{eq}^{-1} \circ P (\Psi(x_{+}) \otimes \Psi(x_{-}))$ is calculated using setups as in the top of Figure \ref{fig:2Seidel}. Specifically, for each $i \ge 0$ we require sextuples $(w_L, w_M,u_L,u_M, u_{R,+},u_{L,+})$ such that:
\begin{itemize}
\item $w_L$ is a $- \nabla g$ flowline from $v^{i,\sigma_1}$ to $v^{j,\sigma_2}$, for some $0 \le j \le i$ and $\sigma_1, \sigma_2 \in \{ \pm 1 \}$.
\item $w_M$ is a $- \nabla g$ flowline from $v^{j,\sigma_2}$ to $v^{0,+}$,
\item $u_L$ is a reversed $(i-j)$-equivariant spiked disc, as in Section \ref{sec:eqpssisom},
\item $u_M$ satisfies Equation \eqref{equation:eqPOP} with respect to $w_M$,
\item $u_{R,+}$ and $u_{R,-}$ are spiked discs, both using the same $J, H_s$ and choice of $f^2_s$ perturbing the Morse function $f$, for $s \in [0,\infty)$ (where $f^2_s = f$ for $s \gg 0$).
\item $$\lim_{s \rightarrow -\infty} u_{R,\pm}(s,t) = \lim_{s \rightarrow \infty} u_{M}(\delta_{\pm}(s,t)).$$
\item $$\lim_{s \rightarrow \infty} u_{L}(s,t) = \lim_{s \rightarrow -\infty} u_{M}(\epsilon_{+}(s,t)).$$
\end{itemize}

We use the equivariant gluing argument, specifically as at the end of Section \ref{subsec:equivariantgluing}, on the right hand side: observe that near the right hand cylindrical ends of the pair of pants used for $u_M$, we have that $J^+_{right,v,s,t} = J^-_{right,v,s,t} = J_t$ for some $J_t$, and on the left hand cylindrical ends of the $0$-equivariant spiked discs $u_{R,\pm}$, we use the data $J^w_{s,t} = J_t$ for the same $J_t$. This is true for a sufficiently large gluing parameter. After gluing, we see that the coefficient of $y$ in $\Psi_{eq}^{-1} \circ P(\Psi(x_{+}) \otimes \Psi(x_{-}))$ is calculated using domains as in the middle figure of \ref{fig:2Seidel}. We may also use the equivariant gluing theorem for the left hand broken equivariant Floer trajectory. Hence one in fact counts configurations as in the bottom of Figure \ref{fig:2Seidel}.

		\begin{figure}
			\input{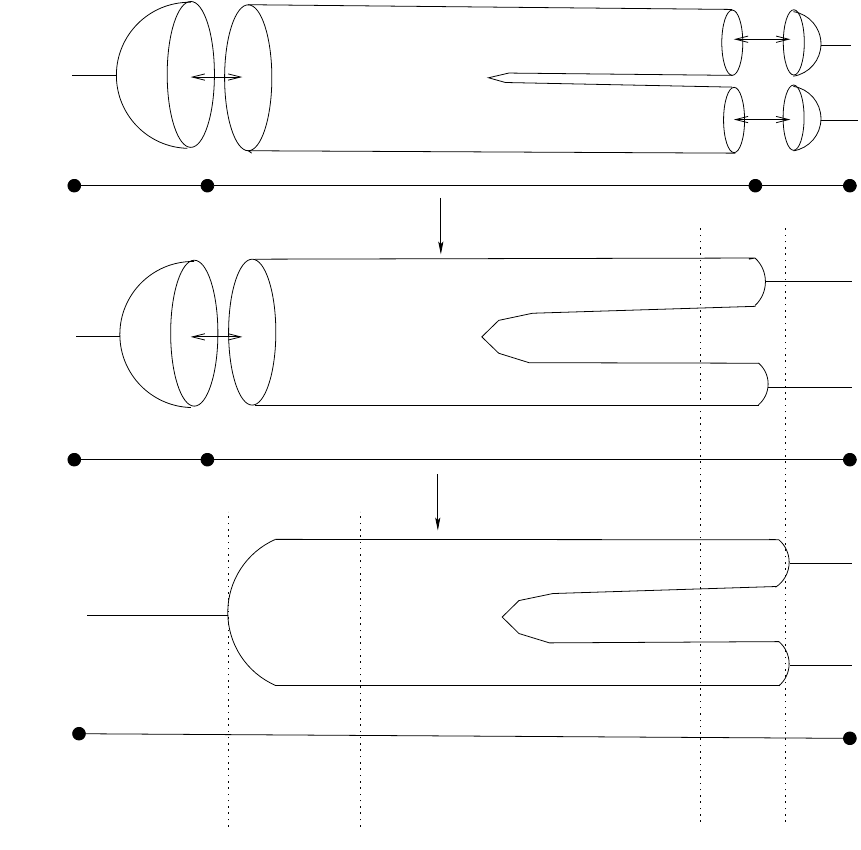_t}
			\caption{Gluing of spiked disks onto equivariant pair of pants. Double headed arrows denote that the Hamiltonian loops are the same.}
			\label{fig:2Seidel}
		\end{figure}

Concretely, let $\tilde{S}$ consist of a pair-of-pants (with finite ends) truncated to the region $s \in [-\lambda,\lambda]$, with a $\tfrac{1}{\pi}\log(2 \lambda)$ radius reversed spiked disc $D_L$ attached on the left and two $\tfrac{1}{\pi}\log(2 \lambda)$ radius spiked discs $D^{\pm}_R$ attached on the right hand ends. For $\lambda>0$ sufficiently large, and for $w: \mathbb{R} \rightarrow S^{\infty}$ a $-\nabla g$ flowline, we may define $J^w_{z}$ for $z \in \tilde{S}$ as: 
\begin{itemize}
\item $J^w_z = J^w_{R \Psi,s',t'}$ for $z \in D_L$ parametrised using logarithmic coordinates on the left capping disc $(s',t') \mapsto \exp(2 \pi(s'+i t'))$,
\item $J^w_{z}$ satisfies the conditions in Section \ref{subsec:eqpop} for $z$ in the region $s \in [-\lambda,\lambda]$,
\item $J^w_z = J^w_{\Psi,s',t'}$ for $z \in D^{\pm}_R$ using logarithmic coordinates on the right hand capping discs $(s',t') \mapsto \exp(-2 \pi(s' + i t'))$.
\end{itemize}
If the gluing parameter $\lambda$ is sufficiently large then this is well defined, i.e. $J^w_z = J_t$ in some neighbourhood of $s = \pm \lambda$. We conclude that the coefficient of $y \in H^*(M)$ in $\Psi_{eq,i-k}^{-1} \circ P \mathcal{S}_k \circ \Psi(x)$ for $x \in H^*(M)$ is the number of isolated pairs $(w,u)$ such that:
\begin{itemize}
\item $w$ is a $-\nabla g$ flowline on $S^{\infty}$ from $v^{i,\pm}$ to $v^{0,+}$,
\item $u: \tilde{S} \rightarrow M$,
\item $(du - X_H \otimes \beta + Y) \circ j = J^w_z (du - X_H \otimes \beta + Y)$,
\item defining $\alpha_L$ as the restriction of $u$  to the negative halfline in $\tilde{S}$, then $\alpha_L: (-\infty, -3 \lambda] \rightarrow M$ satisfies $\dot{\alpha_L}(s) = - \nabla f (\alpha_L(s))$ and $\lim_{s \rightarrow -\infty} \alpha_L(s) = y$.
\item defining $\alpha_{R,\pm}$ as the restrictions of $u$  to the two respective positive halflines in $\tilde{S}$, then $\alpha_{R,\pm}: [3 \lambda, \infty) \rightarrow M$ satisfies $\dot{\alpha_{R,\pm}}(s) = - \nabla f^2_s (\alpha_{R,\pm}(s))$ and $\lim_{s \rightarrow \infty} \alpha_{R,\pm}(s) = x_{\pm}$.
\end{itemize}
This is a transverse setup for us to use the same Morse function $f^2_{s}$ for both of the right-hand Morse flows as we have picked a generic choice of $J_{v,z}$ and constant spheres may not appear due to the Hamiltonian perturbation. 

		\begin{lemma}
		\label{lemma:lemPQQS}
			The number of these glued solutions (as in the bottom of Figure \ref{fig:2Seidel}) for the coefficient of $y$ in $\Psi_{eq}^{-1} \circ \mathcal{P}(\Psi(x) \otimes \Psi(x))$ is the coefficient of $y$ in $Q\mathcal{S}(x)$.
		\end{lemma}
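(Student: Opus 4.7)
The plan is to reduce the glued count to the Morse-theoretic definition of $Q\mathcal{S}(x)$ in Definition \ref{defn:mqssv2} via a degeneration in which the Hamiltonian data is sent to zero. The starting observation is that the glued domain $\tilde{S}$ is conformally a sphere with three infinite Morse tails attached: one left tail ending at the critical point $y$ and two right tails both ending at $x$. After identifying via the branched cover $\pi : S \to \mathbb{R} \times \mathbb{R}/\mathbb{Z}$, this is exactly the tree $\mathcal{Y}$ used in Section \ref{subsec:SqQviaMorse2}, with the covering involution $\gamma$ realised as a holomorphic $\mathbb{Z}/2$-involution of the central $\mathbb{CP}^1$ fixing the two right-hand marked points and swapping the two preimages of the left-hand marked point.

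Concretely, I would introduce a parameter $\tau \in [0,1]$ and replace $H_s$, $H$ and the inhomogeneous perturbation $Y$ on the central region by $\tau H_s$, $\tau H$ and $\tau Y$, interpolating between the current setup at $\tau = 1$ and a Morse/pseudoholomorphic setup at $\tau = 0$. Because $H$ is $C^2$-small, for every $\tau \in [0,1]$ the $1$- and $2$-periodic orbits of $\tau H$ remain in bijection with $\text{crit}(f)$, so the asymptotic conditions persist through the homotopy. At $\tau = 0$ the Floer equation on the central region reduces to the $J^w_z$-holomorphic sphere equation, and the three spiked-disc caps collapse to pure Morse half-lines attached to the sphere. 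Under the identification above, the symmetry conditions $(1)$--$(4)$ placed on $J^{\pm}_{\text{right},v,s,t}$ and $J_{\text{left},v,s,t}$ in Section \ref{subsec:eqpop} translate, after a M\"obius change of coordinate, to the symmetry conditions on $J_{v,z}$ in Section \ref{subsec:SqQviaMorse2}; in particular $\gamma$ is identified with the involution $r$ on $\mathcal{Y}$ appearing in the $\mathbb{Z}/2$-quotient of Definition \ref{defn:mqssv2}. Summing $\sigma = +$ and $\sigma = -$ contributions, and using Remark \ref{rmk:shiftandnegativeaction}, reproduces mod $2$ the $\mathbb{Z}/2$-quotient in Definition \ref{defn:mqssv2}, yielding a bijective identification at $\tau = 0$ between glued pairs $(w, u)$ and elements of the Morse moduli space underlying $Q\mathcal{S}(x)$, with matching $q^A$-weights supplied by the symplectic area of the central sphere.

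The main obstacle is showing that the $\tau$-family of moduli spaces yields a compact, transverse $1$-dimensional cobordism between the $\tau = 1$ and $\tau = 0$ counts. Compactness on the conical end of $M$ uses the integrated maximum principle for Hamiltonians that are radial at infinity, as in \cite{ritterale}; transversality at $\tau = 0$ is the quantum Steenrod transversality of Remark \ref{rmk:transversesq}, and for $\tau \in (0,1]$ it is inherited from the generic choices used in defining $P \mathcal{S}$ and $\Psi_{eq}$. Bubbling is excluded trivially in the exact case, and in the monotone case by the codimension $\ge 2$ argument of Section \ref{subsec:equivariantgluing}, since our moduli spaces are $0$- and $1$-dimensional. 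Floer-type breakings at the asymptotic ends along the homotopy contribute boundary strata that cancel against equivariant differentials via \eqref{equation:eqpopchainmap}, and so disappear on passing to cohomology. Hence the $\tau = 1$ count equals the $\tau = 0$ count, which is precisely the coefficient of $y$ in $Q \mathcal{S}(x)$.
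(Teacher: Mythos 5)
Your overall strategy matches the paper's: identify the glued domain with $\mathcal{Y}$, kill the Hamiltonian data by a homotopy so the central equation becomes $J_{v,z}$-holomorphic, and identify the resulting count with Definition \ref{defn:mqssv2}. There is, however, a concrete gap in the final identification. At $\tau = 0$ your moduli problem still uses the \emph{same}, $v$-independent Morse function $f^2_s$ on both right-hand tails $\alpha_{R,\pm}$ (inherited from the spiked discs), whereas $\mathcal{M}_{\text{Morse},i,A}$ in Definition \ref{defn:mqssv2} explicitly uses $-\nabla f_{w(s),s}$ on $e_2$ and $-\nabla f_{-w(s),s}$ on $e_3$. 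These are not literally the same moduli space, and you cannot simply assert that "the $\tau = 0$ count ... is precisely the coefficient of $y$ in $Q\mathcal{S}(x)$."

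The missing step is a $\mathbb{Z}/2$-equivariantly constrained homotopy of the Morse data: one needs a homotopy $f^2_{v,s,\eta}$ from $f^2_s$ to $f^2_{v,s}$ such that the homotopy used on the other leg is forced to be $\eta \mapsto f^2_{-v,s,\eta}$, and one must verify that a \emph{generic} choice of homotopy remains allowable after this symmetrization. That is exactly the nontrivial content of the paper's proof: it writes down the affine homotopy $F^{\lambda,i}_{v,s} = (1-\lambda)f^i_s + \lambda f^i_{v,s}$, observes that allowable perturbations for each leg form Baire sets $\mathcal{B}_2$, $\mathcal{B}_3$, and then picks an element of $\mathcal{B}_2 \cap \iota(\mathcal{B}_3)$. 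Alternatively you could try to argue (via Remark \ref{rmk:equancdat}) that placing all the $v$-dependence on $J_{v,z}$ rather than on $f$ gives a cobordant count, but that itself requires a homotopy argument of the same flavour and is not immediate. Either way, you need to add this step before the $\tau=0$ count can be identified with Definition \ref{defn:mqssv2}.
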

		\begin{proof}
			Choose $f^2_{v,s}$ as for the Morse quantum Steenrod square, see Section \ref{subsec:SqQviaMorse2}. We seek a homotopy $f^2_{v,s,\eta}$ for $\eta \in [0,1]$ with $f^2_{v,s,0}=f^2$ and $f^2_{v,s,1} = f^2_{v,s}$. Let $f^3_s := f^2_s$. Given a homotopy $f^2_{v,s,\eta}$, let $f^3_{v,s,\eta} := f^2_{-v,s,\eta}$. It is not immediate that generically if $f^2_{v,s,\eta}$ is an allowable homotopy from $f^2$ to $f^2_{v,s}$ then $f^2_{-v,s,\eta}$ is an allowable homotopy from $f^2$ to $f^2_{-v,s}$. 

			Let $F^{\lambda,i}_{ v,s} = (1-\lambda)f^i_{s} + \lambda f^i_{v,s}$ for $i=2,3$. This is some homotopy between the original choice of $f^{i}$ and $f^i_{s,v}$, which is $\mathbb{Z}/2$-equivariant. In general we may need to perturb this homotopy for transversality. Let $\mathcal{B}$ be the space of allowable perturbations of $F^{\lambda} = F^{\lambda,2}_{v,s}$. Let $\mathcal{B}_{i}$ be allowable perturbations of the Morse functions for the upper/lower leg, for $i=2,3$ respectively, of the setup as in the bottom of Figure \ref{fig:2Seidel}. These $\mathcal{B}_{i}$ are Baire sets in $\mathcal{B}$. There is an involution $\iota$ of $\mathcal{B}$ induced by $v \mapsto -v$. As $\mathcal{B}_i$ is Baire for $i=2,3$, so too is $\mathcal{B}_2 \cap \iota (\mathcal{B}_3)$. Choose any $F'^{\lambda}_{v,s}$ as the homotopy for $f^2$ and $F'^{\lambda}_{-v,s} = \iota F'^{\lambda}_{v,s}$ as the homotopy for $f^3$: this is an appropriate perturbation as $\mathcal{B}_2 \cap \iota (\mathcal{B}_3)$ is Baire. 

We use the homotopies $F^{\lambda,2}_{v,s}$ for $\alpha_{R,+}$ and $F^{\lambda,2}_{-v,s}$ for $\alpha_{R,-}$ to assume that $f^2_s$ to be $v$-dependent. That is, we require after homotopy that the respective flowlines on the right hand side satisfy $\dot{\alpha}_{R,\pm}(t) = - \nabla f_{s,\pm w(s)}(\alpha_{R,\pm}(t))$ for $s \in [3 \lambda, \infty)$ (and similarly on the left hand side). We then homotope $Y_z$ and $\beta$ to $0$. After translating in $s$ we may assume $\alpha_{R,\pm}: [0,\infty) \rightarrow M$ and $\alpha_L: (\infty,0] \rightarrow M$, along with the relevant $s$-translation in $f^2_{v,s}$. This is Definition \ref{defn:mqssv2} (using $\mathcal{M}_{\text{Morse},i,A}$) of the quantum Steenrod square, as the $J_{v,z}$ that we obtain after gluing satisfies $$J_{v,z} = J_{-v, z/(z-1)},$$ by construction (see Remark \ref{rmk:gluingaction}).
		\end{proof}
Recall that $P \mathcal{S}(\Psi(x)) = \mathcal{P}(\Psi(x) \otimes \Psi(x))$. This concludes the proof of Theorem \ref{thm:qsseqpopintertwine}.	
\end{proof}

		\begin{rmk}
		\label{rmk:gluingaction}
			\begin{enumerate}
	\item There is a $\mathbb{Z}/2$ action on the broken solutions (top of Figure \ref{fig:2Seidel}), induced from the action of $\gamma$ on the pair-of-pants and the left capping disc, and swapping the two right hand capping discs (along with a half-rotation for both of them). This action induces an action on the glued solutions. Moreover it induces the \textit{same} $\mathbb{Z}/2$ action as in the definition of $Q\mathcal{S}$, by triple transitivity of $PSL(2, \mathbb{C})$. The induced $\mathbb{Z}/2$ action on glued solutions must be the same action as in the $Q\mathcal{S}$ case, as it is a biholomorphism and its action on three points is known. The $\mathbb{Z}/2$ action in the symplectic square case is not explicitly stated in \cite{seidel}, but can be deduced from Addendum 4.12 (or by inspection). It is that solutions $(w,u)$ biject with $(-w,u \circ \gamma)$, where $\gamma : S \rightarrow S$ is the covering involution of $S \rightarrow \mathbb{R}/\mathbb{Z}$.
	\item Our sphere $\tilde{S}$ in the proof of Theorem \ref{thm:qsseqpopintertwine} can be thought of as a conformal rescaling of the sphere $S^2$, in an annulus near each of the three points. Specifically, pick three points $z_1,z_2,z_3 \in S^2$, and $0<\epsilon<\epsilon'$ such that the $\epsilon'$ balls around the $z_i$ are pairwise distinct. We can think of the finite length cylindrical edges in $\tilde{S}$ as taking logarithmic coordinates on the annulus $B_{\epsilon'}(z_i) - \mathring{B}_{\epsilon}(z_i)$.
			\end{enumerate}
		\end{rmk}

\section{$Q\mathcal{S}$ and the symplectic square for equivariant symplectic cohomology}
\label{sec:qsandeqpopopen}

\subsection{Equivariant Symplectic Cohomology}
\label{subsec:equisymplcoh}
Symplectic cohomology is formed as the direct limit of Floer cohomologies using continuation maps $\phi_{H, H'}: HF^*(H) \rightarrow HF^*(H')$ where $H,H'$ are radial at infinity with $\lambda_H \le \lambda_{H'}$. Specifically, $$SH^{*}(M) := \underset{H}{\varinjlim} \ HF^{*}(H),$$ where the direct limit is taken over all Hamiltonians $H$ that are radial at infinity. 

We want to modify this to define $SH^*_{eq} (M)$. There are different ways of doing this in general, but we work as suggested in \cite[Equation (2.50)]{seidel}. Specifically we use the equivariant continuation maps we defined in Section \ref{subsec:equivariantcontinuationmaps} to define the direct limit of the directed set $\{ HF_{eq}^*(2 \cdot H) \}$, where the $H$ are radial at infinity, so $$SH^*_{eq}(M) := \underset{H}{\varinjlim} \ HF_{eq}^*(2 \cdot H).$$ We will define a symplectic square \begin{equation} \label{equation:PSSH} P \mathcal{S}: SH^{*} (M) \rightarrow SH_{eq}^{2*} (M) \end{equation} induced by the symplectic square on Floer cohomology. In order to be well defined, this requires Lemma \ref{lemma:pairofpantsandcontin}:

\begin{proof}[Proof of Lemma \ref{lemma:pairofpantsandcontin}]
		\begin{figure}
			\input{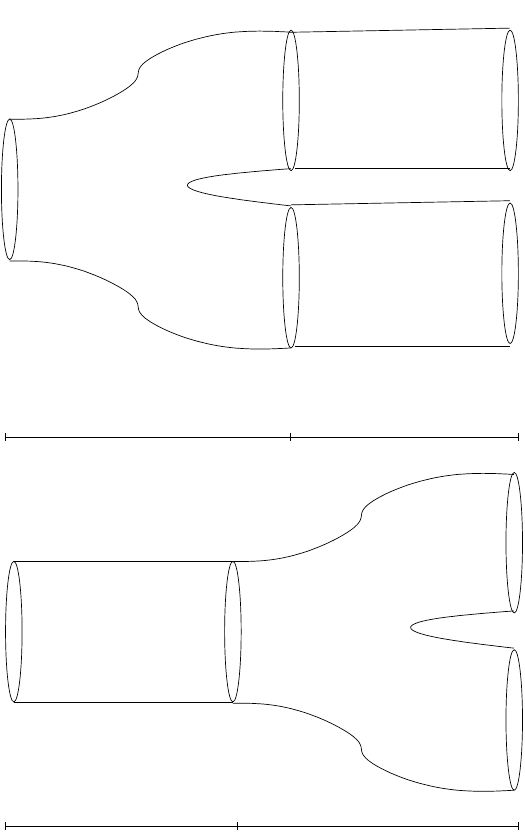_t}
			\caption{Broken solutions for $P \mathcal{S} \circ \phi_{H,H'}$ and $\phi_{eq,H, H'} \circ P \mathcal{S}$ respectively.}
			\label{fig:eqpopandcontin}
		\end{figure}

	Using Figure \ref{fig:eqpopandcontin}, apply the equivariant gluing theorem on the left hand cylinder of the lower setup, and the two right hand legs of the upper setup. The proof follows by invariance under $\mathbb{Z}/2$-equivariant homotopies of the choice of $J$ (which for an appropriate choice of $J$ and gluing parameter may be a constant homotopy), and the other auxiliary data. The flowline from $v^{i,\pm}$ to $v^{j,\sigma}$ (for $j \le i$) in the lower part of Figure \ref{fig:eqpopandcontin} could be rephrased as a solution for the asymptotics $v^{i-j,\pm \sigma}$ and $v^{0,+}$, as in Remark \ref{rmk:shiftandnegativeaction}. 
\end{proof}

Lemma \ref{lemma:pairofpantsandcontin} allows us to define $P \mathcal{S}$ in \eqref{equation:PSSH} as the direct limit of the Hamiltonian Floer case. Let $\xi_{eq}: HF^*_{eq}(2 \cdot H) \rightarrow SH_{eq}^*(M)$ be the natural map to the direct limit.

\begin{defn}[The equivariant $c^*$-map]

$c^*_{eq} := \xi_{eq} \circ \Psi_{eq}$.

\end{defn}

\begin{proof}[Proof of Corollary \ref{corollary:qsseqpopintertwine2}]
Use Theorem \ref{thm:qsseqpopintertwine} and descend to the direct limit.
\end{proof}

\section{The symplectic Cartan relation}
\label{sec:symplcartan}
There is no immediate analogue of the (quantum) Cartan relation for the symplectic square. This is because there is no obvious product $*_{eq}$ on $SH^*_{eq}(M).$ A symplectic Cartan relation might have been expected to take the form $$P \mathcal{S}(x*y) = P \mathcal{S}(x) *_{eq} P \mathcal{S}(y) + q(x,y),$$ for an appropriate product $*_{eq}$ on $SH^*_{eq}(M)$ and a correction term $q: SH^*(M) \otimes SH^*(M) \rightarrow SH^*_{eq}(M)$ as in \cite[Theorem 1.2]{me}. A product is not an issue in the quantum and classical cases because for example $QH^*_{eq}(M) = QH^*(M)[[h]]$, which has an obvious product. This is not true for $SH^*_{eq}(M)$.

Recall in Remark \ref{rmk:shiftandnegativeaction} that it was important for defining the symplectic square that we use an almost complex structure $J^w_{z}$ such that $J^w_z = J^{-w}_{\gamma z}$, where $\gamma$ is an involutary biholomorphism of the pair-of-pants $S$. Specifically, we require that if $u: S \rightarrow M$ is $J^w$-holomorphic then $u \circ \gamma : S \rightarrow M$ is $J^{-w}$-holomorphic, and the two solutions are related by a $\mathbb{Z}/2$ action. Attempting to do the same for $*_{eq}$ fails because we would need to choose $J^w_z$ for $z \in S$ such that near infinity, on the cylindrical ends, $J^w_z = (J_{eq,w(s)})_t$ for $z=(s,t)$ in cylindrical coordinates. This is to ensure that we are constructing a chain map (consider for example what one must obtain in a compactification of the moduli space). Suppose for a contradiction that $J^{-w}_z = J^w_{\gamma' z}$ for some involutary biholomorphic $\gamma' : S \rightarrow S$. Then $\gamma'$ would have to be a half-rotation near each of the three cylindrical ends. As we may view the pair-of-pants as a three punctured sphere (using logarithmic coordinates), we may consider $\gamma'$ to be a biholomorphism of the $3$-punctured sphere. The condition that $\gamma'$ must be a half-rotation near the cylindrical ends means that $\gamma'$ is bounded near each of the punctures, so it extends to a biholomorphism $\gamma' : S^2 \rightarrow S^2$, which fixes each of the three points that were removable singularities. A biholomorphism of $S^2$ that fixes three points is the identity, giving a contradiction, so no such $\gamma'$ exists.

\subsection{The symplectic Cartan relation}
\label{subsec:symplcart}

In this section we will define some almost complex structures $J^p_{v,s,t}$ where $(s,t)$ parametrises a half-cylinder as given below, $v \in S^{\infty}$ and $p$ is an edge label. We give some important terminology to save on repeating ourselves:

\begin{enumerate}
\item $J^p_{v,s,t}$ is incoming if $s \in (-\infty,0]$ and outgoing if $s \in [0,\infty)$.
\item $J^p_{v,s,t}$ is equivariant if $t \in \mathbb{R}/2 \mathbb{Z}$ and nonequivariant if $t \in \mathbb{R}/\mathbb{Z}$.
\item $(J^p_{v,s,t},J^q_{v,s,t})$ are symmetric if $J^p_{v,s,t}=J^q_{-v,s,t}.$
\end{enumerate}

Further:
\begin{enumerate}
\item If $J^p_{v,s,t}$ is equivariant then it satisfies $J^p_{v,s,t} = J^p_{-v,s,t+1}$ for all $(v,s,t)$.
\item If $J^p_{v,s,t}$ is equivariant then $J^p_{v,s,t} = J_{eq,v,t}$ for $|s| \gg 1$.
\item If $J^p_{v,s,t}$ is nonequivariant then $J^p_{v,s,t} = J_t$ for $|s| \gg 1$, where $J_t = J^{\pm}_{right,v,2,t}$ as in Section \ref{subsec:eqpop}.
\item For all $p$, the $J^p_{v,s,t} = J$ for $0 \le |s| \le 1$ for some fixed almost complex structure $J$ on $M$.
\end{enumerate}

We choose almost complex structures:

\begin{itemize}
\item $(J^{LL}_{v,s,t}, J^{LR}_{v,s,t})$ to be outgoing, nonequivariant and symmetric.
\item $(J^{UL}_{v,s,t}, J^{UR}_{v,s,t})$ to be outgoing, nonequivariant and symmetric.
\item $J^{D}_{v,s,t}$ to be incoming and equivariant.
\end{itemize}

		\begin{figure}
			\input{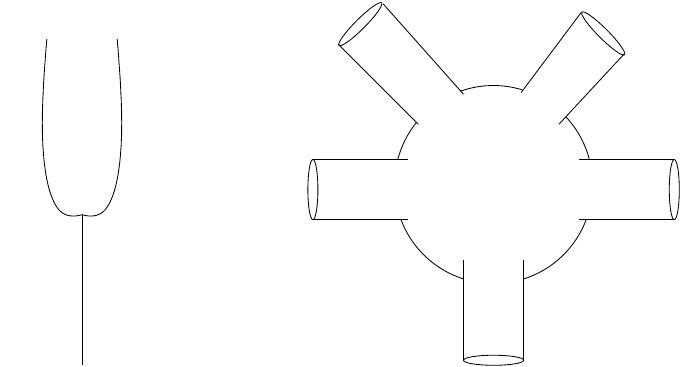_t}
			\caption{The almost complex structure $J^w_z$. The labels $0$, $1$, $2$ next to the edges indicate which $w_i$ is being used in that situation.}
			\label{fig:acssympcart}
		\end{figure}

As we work with cohomology, ``outgoing" ends correspond to inputs and ``incoming" ends correspond to outputs. For each $R \in (1,\infty)$ and $p \in \{ LL,LR,UL,UR,D \}$, let $Z_p \in S^2$ such that: $$Z_{LL} = 0, \ Z_{LR} = \infty, \ Z_{D} = 1, \ Z_{UL} = -1/R,  \ Z_{UR} = -R.$$ 

For $p \in \{ LL,LR,UL,UR,D \}$ pick $\theta_p(R) \in ST_{Z_p} S^2$, the unit circle bundle of $S^2$ at $Z_p$, such that $\theta_{LL} = i^* \theta_{LR}$ and $\theta_{UL} = i^* \theta_{UR}$, where $i(z) = 1/z$. Pick smooth functions $r_p: (1,\infty) \rightarrow \mathbb{R}_{> 0}$ such that the discs $B_{r_p(R)}(Z_p)$ are pairwise distinct for each $R$, and $r_p(R) = r_{p'}(R)$ for $(p,p') = (UL,UR),(LL,LR)$. We let $m_R$ be the associated Riemann surface using logarithmic coordinates on $B_{r_p(R)} (Z_p) - \{ Z_p \}$. The parametrisation of the cylindrical ends for each $p$ are determined by the relevant descriptors ``outgoing, incoming" etc for the corresponding $J^p$ above. 

Let $w = (w_0,w_1,w_2)$ be flowlines on $S^{\infty}$ with respect to perturbations $g_{0,s}, g_{1,s}, g_{2,s}$ of the Morse function $g$, defined for $s \in (-\infty,0]$ for $g_{0,s}$ and $s \in [0,\infty)$ for $g_{1,s}, g_{2,s}$. This is illustrated in Figure \ref{fig:equivariantproductgr}. These $g$ must be chosen to satisfy:

\begin{itemize}
\item Each $g_{q,s} = g$ for $|s| \gg 0$,
\item the ${g}_{q,0}$ are generic, to ensure transverse moduli spaces,
\item $g_{q,s}(\tau x) = g_{q,s}(x) + \text{constant}$, so that $\tau^i$ induces a bijection of flowlines with endpoints $v^{i,+}$ and $v^{0,+}$ as in Remark \ref{rmk:shiftandnegativeaction},
\item $g_{q,s}(-x) = g_{q,s}(x)$, so that they descend to functions on $\mathbb{RP}^{\infty}$.
\end{itemize}

		\begin{figure}
			\input{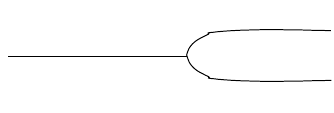_t}
			\caption{Flowlines on $S^{\infty}$.}
			\label{fig:equivariantproductgr}
		\end{figure}

One should think of these $g_{q,s}$ as being used analogously to calculating the cup product on $\mathbb{RP}^{\infty}$ using Morse theory (and is equal to this when restricting to each $\mathbb{RP}^i$). An example of such $g_{i,s}$ is:
\begin{itemize}
\item $g_{0,s} = g_{1,s} = g$ for all $s$,
\item Let $\overline{g}(x_0,x_1,\ldots) = g(x_0,x_1,\ldots) + \epsilon \sum_{j \ge 0} x_{2j} x_{2j+1}$ for some small $\epsilon$. Then let $g_{2,s}(v) = g(v) + \epsilon (1 - \beta(s)) \overline{g}(v)$ where $\beta: [0,\infty) \rightarrow \mathbb{R}$ is smooth, nonincreasing, and $\beta(s) = 1$ for $s \ge 1$ and $\beta(s) = 0$ for $s \le 1/2$. This satisfies all of the conditions above: crucially, it ensures the moduli spaces that we count will be transverse.
\end{itemize}
We define $J^w_{z}$ for $z \in m_R$ such that \begin{equation} \label{equation:acsops} J^w_{s,t} = J_{p,w_{q}(s),s,t} \end{equation} for $z=(s,t)$ on the cylindrical end $e_p$ associated to $Z_p$, and $J_{z} = J$ away from the cylindrical ends, and for pairs $$(p,q) = (D,0), (UL,1), (UR,1), (LL,2), (LR,2).$$ We see that $J^w_z = J^{-w}_{1/z}$.

Given $x,y,z \in SC^*(M)$, $i \in \mathbb{Z}$ and $R \in (1,\infty)$, let $\mathcal{M}_{i,R}(z; x,y)$ consist of pairs $(w,u)$ such that $w$ is a triple of flowlines as in Figure \ref{fig:equivariantproductgr}, with $w_0(-\infty) = v^{i,\sigma}, \ w_1(\infty) = v^{0,+}, \ w_2(\infty) = v^{0,\pm}$ for $\sigma \in \{\pm \}$, and $u: m_R \rightarrow M$ such that ${\displaystyle \lim_{s \rightarrow \infty}} u|_{e_p} (s,t) = \alpha(t)$ for $$(p,\alpha) = (D,z), (UL,y), (UR,y), (LL, x), (LR,x), $$ for $\sigma=+$. For $\sigma = -$, replace $z(t)$ by $z(t+1)$. Then $u$ satisfies \begin{equation} \label{equation:operationfloereq} (du - X_H \otimes \beta + Y) \circ j = J^w_z \circ (du - X_H \otimes \beta + Y)\end{equation} as in the standard Floer equation where $X_H$ is the Hamiltonian vector field associated to $H$ and $\beta$ is a $1$-form on $m_R$ with $d \beta =0$, and $\beta = b_1 dt$ on $e_{LL}, e_{LR}$ where $x \in CF^*(b_1 H)$, similarly $\beta = b_2 dt$ on $e_{UL}, e_{UR}$ and $\beta = (b_1 + b_2) dt$ on $e_D$, where $b_1,b_2 \in \mathbb{Z}_{> 0}$ (where $\beta$ varies smoothly with $R$). We generally have to perturb using a Hamiltonian $Y$, invariant under $z \mapsto 1/z$ and supported only on a compact subset of the cylindrical ends (in order to prevent constant solutions).

\begin{defn}
For generic $R \in (1,\infty)$, $i \ge 0$ $$q(R)^{i}: SC^*(M) \otimes SC^*(M) \rightarrow SC^*(M), \qquad q(R)^i(x,y) = \sum_{z \in \mathcal{L}} \# \mathcal{M}_{i,R}(z; x,y) \cdot z$$ where $\#$ counts isolated solutions. Then $$ x \otimes y \mapsto \sum_{i \ge 0} h^i q(R)^i(x,y)$$ descends to a well defined map $q(R): SH^*(M) \otimes SH^*(M) \rightarrow SH^*_{eq}(M)$, with a similar proof as in Section \ref{subsubsec:eqpopischainmap}.
\end{defn}

		\begin{figure}
			\input{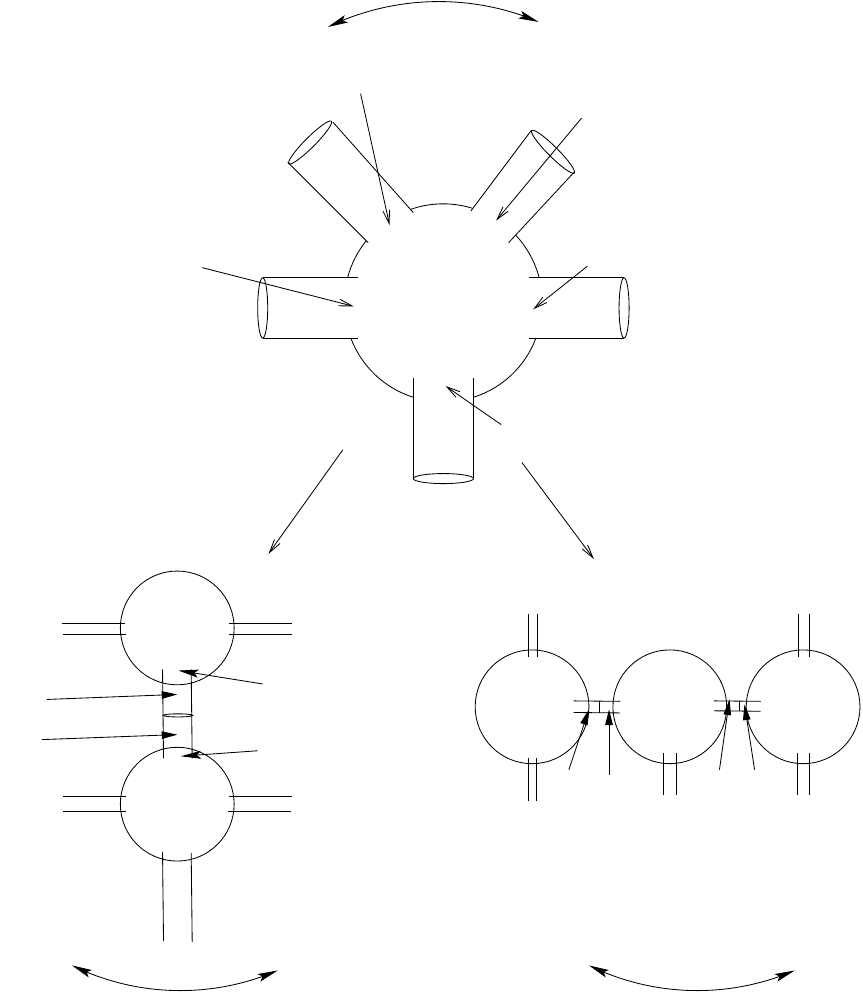_t}
			\caption{The $1$-dimensional moduli space linking $m_1$ and $m_{\infty}$. The double-headed arrows denote $\mathbb{Z}/2$ symmetry. For simplicity we omit the $S^{\infty}$ data from this picture.}
			\label{fig:1modspaandbreak}
		\end{figure}

The $1$-cobordism associated to $R \in (1,\infty)$ is illustrated in Figure \ref{fig:1modspaandbreak}. Its boundary consists of operations corresponding to $m_1$ and $m_{\infty}$ (given in Figure \ref{fig:1modspaandbreak}).

For $m_1$, two new special points are introduced, $Z_{UU}, Z_{LU}$. These points correspond to cylindrical ends, and one must make a consistent choice of $\theta$ and radius for each end. We choose $J^{UU}_{v,s,t}$ to be incoming and equivariant and $J^{LU}_{v,s,t}$ to be outgoing and equivariant. In this case, for isolated solutions we use quartets of flowlines on $S^{\infty}$ denoted $w = (w_0,w_1,w_2,w_3)$ as in Figure \ref{fig:eqprod4us}(I), and our almost complex structure $J_{z}$ for $z \in m_1$ satisfies Equation \eqref{equation:acsops} for $$(p,q) = (D,0), (LU, 1), (LR,2), (LL,2), (UU, 3), (UL,3), (UR,3).$$

		\begin{figure}
			\input{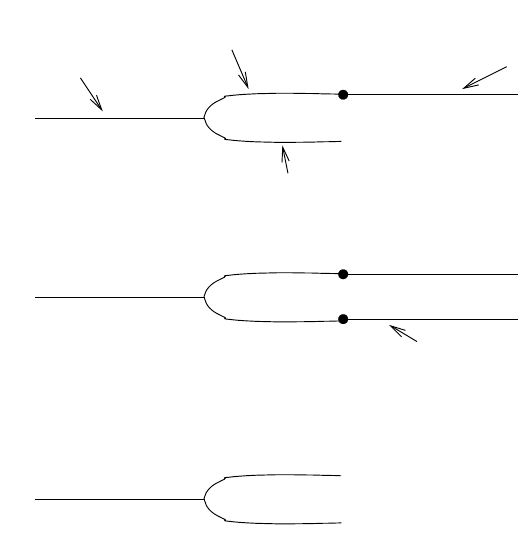_t}
			\caption{Flowline setups used in $S^{\infty}$.}
			\label{fig:eqprod4us}
		\end{figure}

For $m_{\infty}$, there are four new edges that appear, which are labelled $a,b,c,d$ as in Figure \ref{fig:1modspaandbreak}. We choose almost complex structures such that:
\begin{itemize}
\item $(J^a_{v,s,t},J^d_{v,s,t})$ are incoming, nonequivariant and symmetric.
\item $(J^b_{v,s,t},J^c_{v,s,t})$ are outgoing, nonequivariant and symmetric.
\end{itemize}

In this case we use quintets of flowlines on $S^{\infty}$, denoted $w=(w_1,w_2,w_3,w_4,w_5)$ as in Figure \ref{fig:eqprod4us}(II). After homotoping the data as in Lemma \ref{lemma:symplcartan}, the only isolated solutions will be for $w_3$ and $w_4$ being constant flowlines at $v^{0,+}$ and $v^{0,\pm}$. So this really reduces to the setup of Figure \ref{fig:eqprod4us}(III).

The points in $\mathcal{M}_{i,1}(z;x,y) \sqcup \mathcal{M}_{i,\infty}(z; x,y)$ form the boundary of the $1$-dimensional cobordism $\bigsqcup_{R \in [1,\infty]} \mathcal{M}_{i,R}(z; x,y)$, hence $\# \mathcal{M}_{i,1}(z;x,y) = \# \mathcal{M}_{i,\infty}(z;x,y)$ and so $q(1)^i = q(\infty)^i$.

\begin{lemma}
\label{lemma:symplcartan}
	$\sum_i q(\infty)^i(x,y) h^i = P \mathcal{S}(x * y)$ where $*$ is the pair-of-pants product.
\end{lemma}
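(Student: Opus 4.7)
The plan is to recognise the $R \to \infty$ limit as a neck-stretching in which $m_\infty$ splits into three Riemann surfaces connected by long cylindrical necks, and then to identify each piece so that the total count assembles into $P\mathcal{S}(x*y)$.

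First I would analyse the degeneration. As $R \to \infty$ the marked points collide pairwise: $z_{UL} = -1/R \to 0 = z_{LL}$ and $z_{UR} = -R \to \infty = z_{LR}$. Standard neck-stretching produces a main (central) sphere carrying $z_D$ together with two nodal attaching points, plus two bubble spheres --- one attached at $0$ with the ends $z_{UL}, z_{LL}$ (asymptotics $y, x$), the other attached at $\infty$ with the ends $z_{UR}, z_{LR}$ (asymptotics $y, x$). By our prescriptions the main sphere carries equivariant data $J^D$ and the symmetric incoming pair $(J^a, J^d)$, so after matching it is precisely the data of an equivariant pair-of-pants computing $\mathcal{P}$. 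Each bubble sphere is an ordinary (nonequivariant) pair-of-pants with data $(J^{LL}, J^{UL}, J^b)$ or $(J^{LR}, J^{UR}, J^c)$, computing the pair-of-pants product $x*y$. On the parameter side, the quintuple $(w_1,\dots,w_5)$ of Figure~\ref{fig:eqprod4us}(II) splits so that $w_3$ and $w_4$ control only the $v$-dependence over the bubble spheres while the remaining three flowlines carry the equivariant information of the main sphere; a generic $\mathbb{Z}/2$-equivariant homotopy, constructed via the same Baire-category argument used in Lemma~\ref{lemma:lemPQQS}, makes the bubble data $v$-independent, whereupon dimension counting forces $w_3$ and $w_4$ to be constant at $v^{0,+}$ and $v^{0,\pm}$. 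This reduces us to Figure~\ref{fig:eqprod4us}(III), whose three nontrivial flowlines are exactly the Morse-theoretic data used in Section~\ref{subsec:eqpop} for $\mathcal{P}$.

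Now I would apply equivariant gluing (Section~\ref{subsec:equivariantgluing}) in reverse. Every rigid broken configuration contributing to $\mathcal{M}_{i,\infty}(z; x, y)$ decomposes as two ordinary pair-of-pants solutions on the bubbles plus one equivariant pair-of-pants solution on the main sphere, glued along intermediate Hamiltonian loops $\alpha, \beta \in \mathcal{L}_1$. Since the bubble counts are decoupled from $h$, the count factorises as
\[
\# \mathcal{M}_{i,\infty}(z; x, y) \;=\; \sum_{\alpha, \beta \in \mathcal{L}_1} n^{pp}(\alpha; x, y) \, n^{pp}(\beta; x, y) \, n^{\mathcal{P},i}(z; \alpha, \beta),
\]
where $n^{pp}$ is the ordinary pair-of-pants coefficient (so $\sum_\alpha n^{pp}(\alpha; x, y)\, \alpha = x*y$) and $n^{\mathcal{P},i}$ is the coefficient of $h^i$ in $\mathcal{P}$. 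Summing over $i$ with weight $h^i$ and using $P\mathcal{S} = \mathcal{P} \circ \eta$ with $\eta(\beta) = \beta \otimes \beta$ yields $\sum_i q(\infty)^i(x,y) h^i = \mathcal{P}((x*y) \otimes (x*y)) = P\mathcal{S}(x*y)$, as required.

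The main obstacle I expect is maintaining the $\mathbb{Z}/2$-symmetry $J^w_z = J^{-w}_{1/z}$ through the degeneration: one must arrange the neck-stretching and the accompanying homotopies so that the incoming pair $(J^a, J^d)$ on the equivariant piece glues consistently with the outgoing pair $(J^b, J^c)$ on the bubbles, and this symmetry must survive the $v$-independence homotopy on the bubbles. Once the symmetries are aligned, the equivariant compactness and gluing package of Section~\ref{subsec:equivariantgluing} applies to the three-piece configuration verbatim, and no new transversality problems appear on the bubbles since the data there has been made $v$-independent.
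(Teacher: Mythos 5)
Your proof follows essentially the same approach as the paper's: analyse the $m_\infty$ degeneration into two nonequivariant pairs-of-pants feeding an equivariant one, homotope the bubble data $\mathbb{Z}/2$-equivariantly (via the Baire-category argument of Lemma~\ref{lemma:lemPQQS}) to be $v$-independent so that $w_3,w_4$ become constant, and factorise the count as $\mathcal{P}((x*y)\otimes(x*y)) = P\mathcal{S}(x*y)$. One small label swap: the edges $a,d$ in fact lie on the two bubble spheres and $b,c$ on the central sphere --- the incoming pair $(J^a,J^d)$ are the bubble outputs and the outgoing pair $(J^b,J^c)$ are the central sphere's inputs --- but this does not affect the argument.
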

\begin{proof}
		Consider Figure \ref{fig:1modspaandbreak}, specifically the $m_{\infty}$ endpoint. Homotope the data (i.e. the almost complex structures) on the left and right spheres $\mathbb{Z}/2$-equivariantly, as we did for the Morse function in Lemma \ref{lemma:lemPQQS}, so that they are independent of $v$. It is then immediate that $q(\infty)^i(x,y) = P \mathcal{S}_i(x*y)$. 
\end{proof}

Let $\tilde{S}$ be the domain $m_1$, but removing the sphere with edges $Z_{UL}, Z_{UU}, Z_{UR}$, see Figure \ref{fig:operationpsdash}. Thus, $\tilde{S}$ is a Riemann surface with $1+3$ cyclindrical ends. Keep all almost complex structures and other conditions the same as for $m_1$.

		\begin{figure}
			\input{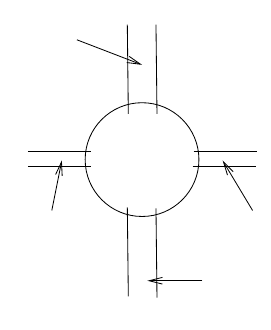_t}
			\caption{Configurations for the coefficient of $a$ in the operation $P \mathcal{S}'(x;y)$.}
			\label{fig:operationpsdash}
		\end{figure}

\begin{defn}
\label{defn:kindasymplcart}
	Using the same edge labels and almost complex structures for $\tilde{S}$ as for $m$, define $$P \mathcal{S}'_i: CF^j(b_1 H) \otimes CF^k_{eq}(2 \cdot b_2 H) \rightarrow CF^{2j+k-i}_{eq}(2 \cdot (b_1+b_2)H),$$ $$P \mathcal{S}'_i(x;y h^l) = \sum_a n_{x,y,a,l,i} \cdot a$$ where $n_{x,y,a,l,i}$ counts the number of pairs $(w,u)$ where $w$ is a setup as in Figure \ref{fig:eqprod4us}(III) with $w_0(-\infty) = v^{i+l,\sigma_0}, \ w_1(\infty) = v^{l,\sigma_1}, \ w_2(\infty) = v^{0,+}$ and $u: \tilde{S} \rightarrow M$ satisfies Equation \eqref{equation:operationfloereq} on $\tilde{S}$ with:

\begin{equation}
\label{equation:asymptotics}
\begin{array}{lll}
            {\displaystyle \lim_{s \rightarrow -\infty}} u|_{e_{D}} &=&  \begin{cases} \begin{array}{ll}
 												a(t) \text{ if } \sigma_0 = + \\
												a(t+1) \text{ if } \sigma_0 = -
                										\end{array} \end{cases}
\\
           {\displaystyle  \lim_{s \rightarrow \infty}} u|_{e_{LU}} &=& \begin{cases} \begin{array}{ll}
 												y(t) \text{ if } \sigma_1 = + \\
												y(t+1) \text{ if } \sigma_0 = -
                										\end{array} \end{cases}
\\
           {\displaystyle  \lim_{s \rightarrow \infty}} u|_{e_{k}} &=& x(t) \text{ for } k=LR,LL.
\end{array}
\end{equation}

	Define $P \mathcal{S}'(x;y h^l) :=\sum_{i \ge 0} P \mathcal{S}'_i(x;y h^l) h^{l+i}.$
\end{defn}

Note that $P \mathcal{S}'$ descends to a map on cohomology as in Section \ref{subsubsec:eqpopischainmap}: first we construct  $$\mathcal{P}': (CF^*(b_1 H) \otimes CF^*(b_1 H)) \otimes CF^*_{eq}(2 \cdot b_2 H) \rightarrow CF^*_{eq}(2 \cdot (b_1+ b_2) H),$$ where for $\mathcal{P}'(x_{LL}, x_{LR}, y)$ we consider $u$ satisfying Equations \eqref{equation:operationfloereq} and \eqref{equation:asymptotics}, changing $x(t)$ to $x_k(t)$ in \eqref{equation:asymptotics}. We extend $h$-linearly to $$\mathcal{P}': C^*_{\mathbb{Z}/2}(CF^*(b_1 H) \otimes CF^*(b_1 H)) \otimes CF^*_{eq}(2 \cdot b_2 H) \rightarrow CF^*_{eq}(2 \cdot (b_1+ b_2) H).$$   We then analyse the different possible ways in which the cylindrical ends may break. This shows that $\mathcal{P}'$ is a chain map, hence $P \mathcal{S}' = \mathcal{P}' \circ (\eta \otimes id)$ is a well-defined map on cohomology.

It is immediate that $P \mathcal{S}'_0(x;y h^i)$ is a chain representative of $x^2 y h^i$ and $P \mathcal{S}'_i(x;1)$ is a chain representative of $P \mathcal{S}_i(x)$.

\begin{rmk}
	Using the $1$-dimensional cobordism illustrated in Figure \ref{fig:actiononSHeq} for $R \in [1,\infty]$, arguing as in Lemma \ref{lemma:symplcartan} it can be shown that $$P \mathcal{S}'(x*z;y) = P \mathcal{S}'(x;P \mathcal{S}'(z,y))$$ or rather that $P \mathcal{S}'$ defines a $SH^*(M)$-module structure on $SH^*_{eq}(M)$. This seems to be the best we can do with the given approach, considering there is no obvious product structure on $SH^*_{eq}(M)$.

		\begin{figure}
			\input{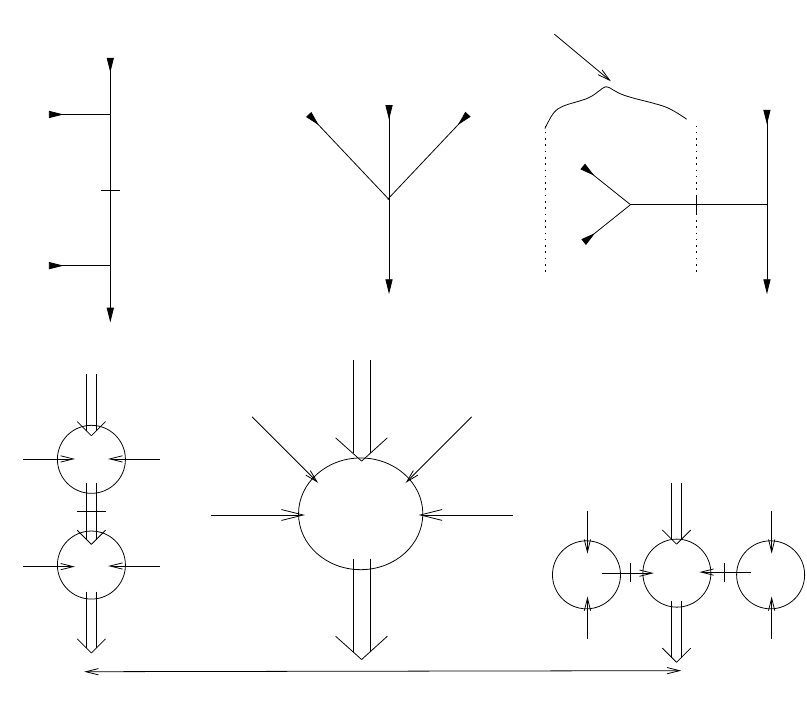_t}
			\caption{For the lower row, single arrows represent Floer trajectories for inputs from $SH^*(M)$. Double arrows represent trajectories for inputs/outputs from $SH^*_{eq}(M)$. Arrows split by an orthogonal line are broken trajectories. The upper row demonstrates the relevant flowlines on $S^{\infty}$, labelled accordingly.}
			\label{fig:actiononSHeq}
		\end{figure}

\end{rmk}

\begin{lemma}
\label{lemma:psisq1}
$$\sum_i q(1)^i(x,y) h^i = P \mathcal{S}'(x;P \mathcal{S}(y)).$$
\end{lemma}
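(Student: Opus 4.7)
The plan is to analyze the nodal configuration $m_1$ of Figure \ref{fig:1modspaandbreak} (in which the colliding special points $Z_{UL}, Z_{UR}$ bubble off) and, after neck-stretching at the resulting node, recognize it as the composition of $P\mathcal{S}$ acting on $y$ in the upper bubble with $P\mathcal{S}'(x;-)$ acting on the lower component. The new special points $Z_{UU}, Z_{LU}$ are the two sides of the node: in the broken-neck picture they are glued along an equivariant Floer cylinder with a common $2$-periodic orbit $a$ of $b_2 H$.

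First, I would identify the upper bubble with the symplectic square domain for $y$. The chosen pair $(J^{UL}_{v,s,t}, J^{UR}_{v,s,t})$ is outgoing, nonequivariant and symmetric, while $J^{UU}$ is incoming and equivariant; these are precisely the conditions on $J^{\pm}_{\text{right}}$ and $J_{\text{left}}$ in Section \ref{subsec:eqpop}, and the symmetry $Z_{UL} \leftrightarrow Z_{UR}$ (via $J^w_z = J^{-w}_{1/z}$) realises the covering involution $\gamma$ of the pair-of-pants. The flowline $w_3$ in Figure \ref{fig:eqprod4us}(I), governed by the unperturbed $-\nabla g$, drives only the upper bubble data; summing over its starting critical point $v^{j, \sigma_3}$ (and reading off the $h^j$ weight exactly as in Definition \ref{defn:eqpop}) recovers $\mathcal{P}(y \otimes y) = P\mathcal{S}(y)$, with the node asymptotic to an orbit $a$ weighted by the coefficient of $a h^j$ in $P\mathcal{S}(y)$.

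Second, I would identify the lower component with $P\mathcal{S}'(x;-)$. After removing the bubble, the lower sphere has four cylindrical ends $\{Z_{LL}, Z_{LR}, Z_D, Z_{LU}\}$ with data $(J^{LL}, J^{LR})$ outgoing/nonequivariant/symmetric, $J^D$ incoming/equivariant and $J^{LU}$ outgoing/equivariant: this is precisely the surface $\tilde{S}$ and data of Definition \ref{defn:kindasymplcart}. The flowlines $(w_0, w_1, w_2)$ are those of Figure \ref{fig:eqprod4us}(III); matching the $S^{\infty}$-asymptotics at the node forces $w_1(\infty) = w_3(-\infty) = v^{j, \sigma_3}$, so summing over isolated solutions produces $P\mathcal{S}'_k(x; a h^j)$ with added $h$-weight $h^k$. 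Equivariant gluing along the node (as in Section \ref{subsec:equivariantgluing}, applied to matching $2$-periodic orbits of $b_2 H$) yields a bijection between points of $\mathcal{M}_{i,1}(z;x,y)$ and composite solutions with $j+k = i$, hence $q(1)^i(x,y) = \sum_{j+k=i} P\mathcal{S}'_k(x; P\mathcal{S}_j(y) h^j)$; multiplying by $h^i$ and summing over $i$ collapses to $P\mathcal{S}'(x; P\mathcal{S}(y))$.

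The main obstacle will be the consistent bookkeeping of the shift data $\sigma \in \{\pm\}$ across the node, since the matching $v^{j, \sigma_3}$ couples the upper bubble's output convention to the lower component's $w_1$ end. This amounts to checking that the decoration-shift rules of Remark \ref{rmk:shiftandnegativeaction}, relating $(w,u)$ and $(-w, u\circ \gamma)$, are compatible across the glued equivariant neck, which follows from the $\mathbb{Z}/2$-equivariance of $J^{UU}$ and $J^{LU}$ guaranteed by conditions (1)--(2) at the start of Section \ref{subsec:symplcart}. Transversality on each component then reduces to the genericity already invoked in the definitions of $P\mathcal{S}$ and $P\mathcal{S}'$, so no further choices of perturbation data are needed.
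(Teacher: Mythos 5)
Your proof is correct and takes essentially the same approach as the paper: the published proof is the one-liner ``this is immediate, given the definition of $P\mathcal{S}'$,'' and your detailed unwinding of the nodal domain $m_1$ into an upper pair-of-pants (computing $P\mathcal{S}(y)$) and the lower component $\tilde{S}$ (computing $P\mathcal{S}'(x;\cdot)$), together with the $h$-weight bookkeeping via the flowline matching $w_1(\infty) = w_3(-\infty)$, is exactly that observation made explicit. One small remark: the equivariant gluing you invoke at the node is unnecessary, since $m_1$ is by construction a two-component domain whose cylindrical ends $Z_{UU}$ and $Z_{LU}$ are matched Floer asymptotics, so a map from $m_1$ is definitionally a composite solution rather than something produced by a gluing theorem (gluing is used only for the cobordism $q(1)^i = q(\infty)^i$, not for this identification).
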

\begin{proof}
This is immediate, given the definition of $P \mathcal{S}'$.
\end{proof}

\begin{proof}[Proof of Theorem \ref{thm:symplecticcartan}]
	Use Lemmas \ref{lemma:psisq1} and Lemma \ref{lemma:symplcartan}. 
\end{proof}

\begin{rmk}[A sketch of the comparison to the quantum Cartan relation]
Consider the methods in \cite[Section 5]{me}, when proving the quantum Cartan relation. The operations $$(x,y) \mapsto \begin{cases} \begin{array}{l} Q \mathcal{S}(x) * Q \mathcal{S}(y)  \\ Q \mathcal{S}(x*y) \end{array} \end{cases}$$ correspond to two different configurations of five marked points on nodal spheres. To identify these two operations would have required a $\mathbb{Z}/2$-invariant homotopy between these two point configurations, and it can be shown that such a homotopy does not exist. Here, the $\mathbb{Z}/2$-action acts by as $(12)(34)$ if the marked points are labelled $z_0, z_1, z_2, z_3, z_4$.

In the referenced section of the paper, these configurations were embedded as points in a four-dimensional space of domains, and it was observed that while there was no $\mathbb{Z}/2$-invariant homotopy connecting these configurations, we could do the next best thing and find a $\mathbb{Z}/2$-equivariant cycle (i.e. an element of the $\mathbb{Z}/2$-equivariant homology of the space of domains) that allowed us to construct a correction term in the quantum Cartan relation. Attempting to do this in the symplectic case involves a much more complicated moduli space. Broadly, one has (at least) five punctures as opposed to marked points, along with the information of an asymptotic point at each puncture. The addition of the asymptotic point means that, under the compactification, some points in the moduli space the quantum case are replaced by copies of $S^1$ (because the induced asymptotic point on the limiting configuration may well depend on the direction at which it is approached). Depending on the choice of parametrisation, the point ``corresponding" to the operation $(x,y) \mapsto Q \mathcal{S}(x) * Q \mathcal{S}(y)$ is replaced by a copy of $S^1$. This prevents us from defining the product of the symplectic squares. Moreover, one might consider using this entire copy of $S^1$ to define an operation, but as it is the boundary of a $2$-disc in the moduli space the induced operation will be $0$.
\end{rmk}

\subsection{Cotangent bundles for $n$-spheres}
\label{subsec:calcs}

Seeliger in \cite{seeliger} calculates $\mathbb{H}_{*}(\mathcal{L}S^n; \mathbb{Z}): = H_{*+n}(\mathcal{L}S^n; \mathbb{Z})$ using the Serre fibration $\Omega S^n \xhookrightarrow{} \mathcal{L}S^n \rightarrow S^n$, where $\Omega S^n$ is the based loop space of $S^n$ and $\mathcal{L}S^n = C^{\infty}(S^1,S^n)$ is the free loop space of $S^n$. We can take a tensor product of the spectral sequence with $\mathbb{Z}/2$ to show that with $\mathbb{Z}/2$ coefficients: $$\mathbb{H}_* (\mathcal{L}S^n) \cong \mathbb{H}_*(S^n) \otimes H_*(\Omega S^n)$$ as rings, where $\mathbb{H}_*(S^n) = H_{*+n}(S^n)$. The ring structure uses the Chas-Sullivan loop product, the intersection product and the Pontrjagin product respectively. Then $$\mathbb{H}_*(\mathcal{L}S^n) \cong \tfrac{\mathbb{Z}}{2}[x] / (x^2) \otimes \tfrac{\mathbb{Z}}{2}[y],$$ where $|x| = -n$ and $|y|=n-1$.

The Viterbo isomorphism states that $SH^*(T^* S^n) \cong \mathbb{H}_{-*}(\mathcal{L}S^n)$ as rings, see \cite{viterbo}, so these $x,y \in \mathbb{H}_*(\mathcal{L}S^n)$ correspond (abusively) to $|x| = n$ and $|y| = 1-n$ in $SH^*(T^* S^n)$. For example, for odd $n$ we may use as representatives of the generators of $H_*(\mathcal{L}S^n)$ embedded submanifolds as given by Oancea \cite[Section 7]{oancea}, for $n>1$.

By Theorem \ref{thm:symplecticcartan}, to calculate $P \mathcal{S}(xy^i)$ we may calculate: $$\sum_{k=0}^r P \mathcal{S}'_{r-k} (y^i; P \mathcal{S}_k(x)) \text{ for all } r.$$

By considering $\mathbb{H}_*(\mathcal{L}S^n)$, we see that $x = c^*(x_n)$ where $x_n \in H^n(S^n)$ is the generator. Using Corollary \ref{corollary:qsseqpopintertwine2}, $$P \mathcal{S} \circ c^*(x_n) = c^*_{eq} \circ Sq(x_n)$$ and classically $Sq(x_n) = x_n h^n$. Recall that $c^*_{eq} = c^* + h(\ldots)$, so $P \mathcal{S}(x) = x h^n + h^{n+1}(\ldots)$. Specifically there is a representative of $P \mathcal{S}(x)$ such that $P \mathcal{S}_n(x)$ represents $x$ and $P \mathcal{S}_r(x) = 0$ for $r < n$. This implies that for $r=n$: $$P \mathcal{S}_n(x y^i) = \sum_{k=0}^n P \mathcal{S}'_{n-k}(y^i;P \mathcal{S}_k (x)) = P \mathcal{S}'_0(y^i;x) = x y^{2i},$$ and for $r < n$, $$P \mathcal{S}_r(x y^i) = \sum_{k=0}^{r} P \mathcal{S}'_{r-k}(y^i; P \mathcal{S}_k(x)) = 0.$$ This is as much as we may prove for these specific representatives of the generators (in fact, we have not even shown that $P \mathcal{S}(x y^i) \neq 0$, because we have not ruled out $x y^i h^n + h^{n+1}(\ldots)$ being exact).

The other generators of $SH^*(T^* S^n)$ correspond to $y^i$ (for $i \in \mathbb{N})$. Recall that $P\mathcal{S}_0(y^i)$ is a chain representative of $y^i * y^i =  y^{2i}$. Hence $P \mathcal{S}(y^i) = y^{2i} + h(\ldots)$, abusively denoting by $y^{2i}$ our chain level representative of $y^{2i}$. From this we can prove slightly more, specifically that:

\begin{lemma}
$P \mathcal{S}(y^i) \neq 0$.
\end{lemma}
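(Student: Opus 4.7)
The plan is to exploit the filtration by powers of $h$ in the equivariant complex. Concretely, setting $h = 0$ defines a quotient map of chain complexes
\[ \pi: (CF^{*}_{eq}(2 \cdot H), d_{eq}) \longrightarrow (CF^{*}(2 \cdot H), d_{J}), \qquad \sum_{j \ge 0} \alpha_{j} h^{j} \longmapsto \alpha_{0}, \]
which is a chain map because $d_{eq} = d_{J} + h \sum_{i \ge 1} h^{i-1}(d^{i,+}_{eq} + d^{i,-}_{eq})$, so $h \cdot CF^{*}_{eq}(2 \cdot H)$ is a subcomplex. Thus $\pi$ descends to a group homomorphism $\pi_{*}: HF^{*}_{eq}(2 \cdot H) \to HF^{*}(2 \cdot H)$. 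Since the equivariant continuation maps $\Phi_{eq,H,H'}$ constructed in Section~\ref{subsec:equivariantcontinuationmaps} have $h = 0$ coefficient equal to the ordinary continuation maps $\Phi_{H,H'}$, $\pi$ is compatible with the direct systems and yields $\pi_{*}: SH^{*}_{eq}(M) \to SH^{*}(M)$.

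First I would check that this projection intertwines the symplectic square with the ordinary squaring operation, in the sense that $\pi_{*} \circ P\mathcal{S} = (\cdot * \cdot) \circ \Delta$, where $\Delta(x) = x \otimes x$. This is immediate from the definition of $P\mathcal{S}$ and the description of $P\mathcal{S}^{0, \pm}$: the $h^{0}$ coefficient of $P\mathcal{S}$ counts rigid solutions to Equation~\eqref{equation:eqPOP} with $w$ a constant flowline at $v^{0,+}$, and this is precisely the pair-of-pants product applied to $x \otimes x$. The paper already records this as the identity $P\mathcal{S}_{0}(y^{i}) = y^{i} * y^{i} = y^{2i}$.

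Now I would finish by a direct contradiction: suppose $P\mathcal{S}(y^{i}) = 0$ in $SH^{*}_{eq}(T^{*} S^{n})$. Applying $\pi_{*}$ gives $y^{2i} = \pi_{*}(P\mathcal{S}(y^{i})) = 0$ in $SH^{*}(T^{*} S^{n})$. But under the Viterbo isomorphism the element $y^{2i}$ corresponds to a nonzero generator of $\mathbb{H}_{*}(\mathcal{L}S^{n}) \cong \tfrac{\mathbb{Z}}{2}[x]/(x^{2}) \otimes \tfrac{\mathbb{Z}}{2}[y]$, which is a contradiction. Hence $P\mathcal{S}(y^{i}) \neq 0$.

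The only thing that is not entirely formal is the verification that $\pi_{*}$ is well-defined on the direct limit; the main obstacle is therefore checking that $\Phi_{eq,H,H'} \equiv \Phi_{H,H'} \pmod{h}$ at the chain level, which follows from inspection of the moduli spaces $\mathcal{M}^{0,+}_{\text{cont}}$ (here $w$ must be the constant flowline at $v^{0,+}$, so the parametrised Cauchy--Riemann equation degenerates to the ordinary continuation equation). Everything else is formal manipulation of power series and direct limits.
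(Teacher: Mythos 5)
Your proof is correct and is essentially the same as the paper's: both arguments reduce modulo $h$ (i.e.\ project to the $h^{0}$ coefficient), use that $P\mathcal{S}_{0}(y^{i})$ represents $y^{2i} = y^{i} * y^{i}$, and conclude from $y^{2i} \neq 0$ in $SH^{*}(T^{*}S^{n})$. The paper phrases this directly at the chain level ($d_{eq} = d_{J} + h(\cdots)$, so a $d_{eq}$-boundary has a $d_{J}$-exact $h^{0}$ term), whereas you package the same observation as a quotient chain map $\pi$; the content is identical.
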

\begin{proof}
Suppose for a contradiction that $P \mathcal{S}(y^i) = y^{2i} + h(\ldots) $ is a boundary, so $y^{2i} + h(\ldots) = d_{eq}(A_0 + A_1 h^1 + \ldots)$. This implies that $y^{2i} = d(A_0)$ ($d$ here being the nonequivariant differential), hence $y^{2i}$ is exact. This is a contradiction from the ring structure of $SH^*(T^* S^n)$.
\end{proof}

\begin{rmk}

It should be reiterated that $P \mathcal{S}_i$ and $P \mathcal{S}'_{j}$ are defined on the chain level. They are not in general maps on homology, even though $P \mathcal{S}$ and $P \mathcal{S}'$ are well defined on homology.

\end{rmk}

\section{The Equivariant Pair-of-Pants for Negative Line Bundles}
\label{sec:eqpopnlb}
In this section we extend the work by Ritter in \cite{ritterFTNLB} to the $\mathbb{Z}/2$-equivariant setting. Specifically, we are interested in the total space of a negative line bundle $E$ over a closed symplectic manifold $B$, which we denote $M = \text{Tot}(E \rightarrow B)$. We are interested in the case where $M$ is monotone, and we use a Novikov field $\Lambda = \mathbb{Z}/2 ((T))$, where $|T| = 2N$. The minimal Chern number $N$ in such a case is defined by $c_1(\pi_2(M)) = N \mathbb{Z}$ for $N> 0$.  We will only repeat immediately relevant technical details of the cited paper, in order to minimise repitition.

\subsection{The relation between the quantum Steenrod square and the symplectic square}
We recall that Ritter proved in \cite[Theorem 1, see Section 1.3]{ritterFTNLB}, with $M$ as above, that there is a particular linear homomorphism $r: QH^*(M) \rightarrow QH^{*+2}(M)$ such that \begin{equation} \label{equation:ritterFTNLB} QH^*(M)/(\ker r^k)\xrightarrow{\cong} SH^*(M), \end{equation} for $k \gg 0$, with the isomorphism being induced by the $c^*$ map. Indeed, $\ker r^k = \ker c^*$ and we denote $K = \ker r^k$. 

For the case of $\mathbb{Z}/2$-equivariant Floer theory, starting with the commuting square in Equation \eqref{equivcstar}, we may write down the following commutative diagram:

\begin{equation}\label{equivFTNLB}
\xymatrix{
QH^*(M)/ K \cdot \Lambda
\ar@{->}^-{Q\mathcal{S}}[r]
\ar@{->}^-{\cong}_{c^*}[d]
&
QH^*_{eq}(M)/ Q \mathcal{S}(K) \cdot \Lambda[[h]]
\ar@{->}_-{c^*_{eq}}[d]
\\ 
SH^*(M)
\ar@{->}^-{P \mathcal{S}}[r]
&
SH^*_{eq}(M)
}
\end{equation}

The fact that $Q \mathcal{S}$ descends to the top horizontal map of Equation \eqref{equivFTNLB} is by linear algebra. The fact that $c^*_{eq}$ descends to the right hand vertical map of Equation \eqref{equivFTNLB} comes from the commutativity of \eqref{equivcstar}, i.e. \begin{equation} \label{equation:rqscommute} c^*_{eq}(Q \mathcal{S}(k)) = P \mathcal{S}(c^*(k)) = 0, \end{equation} for any $k \in K$. In general we do not know any more about the homomorphism $c^*_{eq}$, but we will discuss the setup of the problem. In the next section we will prove that $c_{eq}^*$ is an isomorphism for $\text{Tot}(\mathcal{O}(-1) \rightarrow \mathbb{CP}^n)$. 

We proceed as in \cite{ritterFTNLB}, we choose a loop of Hamiltonian symplectomorphisms $g_t: M \rightarrow M$ for $t \in \mathbb{R}/\mathbb{Z}$ such that $g_t$ is multiplication by $e^{2 \pi i t}$ (rotation of the fibre $\mathbb{C}$ of $E$). The $g_t$ are generated by the Hamiltonian $H_1 = H_1(R) = (1+ \epsilon) R$ for some $0 < \epsilon \ll 1$ depending on the choice of symplectic form. Here $R$ is the radial coordinate of the line bundle. Let $H_k(R) = k(1+\epsilon) R$. Define for $t \in \mathbb{R}/2\mathbb{Z}$ \begin{equation} \label{equation:HwrappingG} g^* H_k = H_k \circ g_t - H_1 \circ g_t = H_{k-1},\end{equation} as $g_t$ preserves $R$, and \begin{equation} \label{equation:JwrappingG} (g^*J_{eq,v})_t = d(g_t)^{-1} \circ J_{eq,v,t} \circ d(g_t), \end{equation} which still satisfies the conditions of Section \ref{subsec:equivHF}, remaining regular as in \cite[Theorem 18]{ritterFTNLB}. Notice that $g_t$ is $1$-periodic, so for $t \in \mathbb{R}/2\mathbb{Z}$ it is twice wrapped. In particular, \eqref{equation:JwrappingG} ensures that $(g^* J_{eq,-v})_{t+1} = (g^* J_{eq,v})_t$. There is a choice of lift of $g_t$ to $\tilde{g}$, an action on the cover $\widetilde{\mathcal{L}_0 M} \rightarrow \mathcal{L}_0 M$, where $\mathcal{L}_0 M$ consists of contractible free loops on $M$ and $\widetilde{\mathcal{L}_0 M}$ consists of pairs $(v,x)$ where $x \in \mathcal{L}_0 M$ and $v: D^2 \rightarrow M$ with $\partial v = x$, up to a relation $\sim$ where $(v_1,x_1) \sim (v_2,x_2)$ exactly when $x_1 = x_2$ and $\omega, c_1$ both vanish on $v_1 \# \overline{v_2}$. In the case of this specific $g_t$ we choose $\tilde{g}_t$ as in \cite[Section 7.8]{ritterFTNLB}, such that $\tilde{g} \cdot (c_x,x) = (c_x,x)$ where $x \in \mathcal{L}_0 M$ is a constant loop in the zero section of $M$ and $c_x: D^2 \rightarrow M$ is the constant map at $x$.

We define $$S_{\tilde{g}}^{eq} : CF^*_{eq}(2 \cdot H, J_{eq}) \rightarrow CF_{eq}^{*+4}(2 \cdot g^*H, g^* J_{eq}), \quad c \mapsto \tilde{g}^{-1}c.$$ Specifically, let $c = (x,\tilde{x}) \in CF^*(2 \cdot H)$, where we consider $x$ as being $2$-periodic with respect to $H$ (Section \ref{subsec:equivHF}) and $\tilde{x}$ is a filling disc for $x$. Then $\tilde{g}^{-1}(x,\tilde{x}) = (y, \tilde{y})$ where $\tilde{y}(t) = \tilde{g}^{-1}_t(\tilde{x}(r,t))$, using polar coordinates on the disc for $t \in \mathbb{R}/2\mathbb{Z}$. The grading shift $* \mapsto *+ 4$ comes from the Maslov index of $(g^2, \tilde{g}^2)$: see \cite[Section 3.1 and Lemma 48]{ritterFTNLB}. Further, this commutes with the equivariant continuation maps using the equivariant gluing arguments as have appeared throughout this paper, so \begin{equation} \label{equation:commuteact} S^{eq}_{\tilde{g}} \circ \Phi_{eq,H,H'} = \Phi_{eq, g^* H, g^*H'} \circ S^{eq}_{\tilde{g}}.\end{equation} 

Observe that, for $r_{eq}$ defined by the composition $$r_{eq} = \Psi_{eq}^{-1} \circ \Phi_{eq,H_{-1},H_0} \circ S^{eq}_{\tilde{g}} \circ \Psi_{eq},$$ the following diagram commutes:

\begin{equation}\label{dirlimetc}
SH_{eq}^*(M) \xymatrix{
=\underset{H}{\varinjlim}\left( HF^*_{eq}(H_0) \right.
\ar@{->}^-{\Phi_{eq,H_0,H_1}}[rr]
&
&
HF^*_{eq}(H_1)
\ar@{->}^-{\Phi_{eq,H_1,H_2}}[rr]
& 
&
HF^*_{eq}(H_2)
\ar@{->}^-{\Phi_{eq,H_2,H_3}}[r]
&
\left. \ldots \right)
\\ 
=\underset{H}{\varinjlim}\left( QH^*_{eq}(M) \right.
\ar@{->}_-{r_{eq}}[rr]
\ar@{->}^{\cong}_-{\Psi_{eq}}[u]
&
&
QH^{*+4}_{eq}(M)
\ar@{->}_-{r_{eq}}[rr]
\ar@{->}^{\cong}_-{(S^{eq}_{\tilde{g}})^{-1} \circ \Psi_{eq}}[u]
&
&
QH^{*+8}_{eq}(M)
\ar@{->}_-{r_{eq}}[r]
\ar@{->}^{\cong}_-{(S^{eq}_{\tilde{g}})^{-2} \circ \Psi_{eq}}[u]
&
\left. \ldots \right)
} 
\end{equation}

Hence we may determine $SH^*_{eq}(M)$ using the lower row. Compare to \cite[Section 3, Section 4.2]{ritterFTNLB}, where the map $$S_{\tilde{g}}: HF^*(H,J) \rightarrow HF^{*+2}(g^* H, g^* J), \quad c \mapsto g^{-1} c,$$ was similarly defined, where $c$ is a $1$-periodic Hamiltonian loop for $H$ and $(g c)(t) = g_t \cdot c(t)$. As $1$-periodic loops were used in \cite{ritterFTNLB}, the index change is $2$, the Maslov index of $(g,\tilde{g})$. Likewise the map $r: QH^*(M) \rightarrow QH^{*+2}(M)$ was defined by the composition \begin{equation} \label{equation:nonequr} r = \Psi^{-1} \circ \Phi_{H_{-1},H_0} \circ S_{\tilde{g}} \circ \Psi.\end{equation} Ritter then showed that there is some $k>0$ such that $r|_{\text{Im} \ r^k}: \text{Im} \ r^k \rightarrow \text{Im} \ r^{k+1}$ is an isomorphism, hence $c^* : QH^*(M) / \ker r^k \xrightarrow{\cong} SH^*(M)$.  Unfortunately, the methods in that paper do not apply imediately, because $QH^*_{eq}(M)$ is not a finite dimensional $\Lambda$-module.

\begin{lemma}
\label{lemma:finalthm0}
$r_{eq} \circ Q \mathcal{S} = Q \mathcal{S} \circ r$.
\end{lemma}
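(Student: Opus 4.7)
The plan is to use Theorem \ref{thm:qsseqpopintertwine} and Lemma \ref{lemma:pairofpantsandcontin} to reduce the statement to the claim that $P\mathcal{S}$ intertwines the two wrapping operators $S_{\tilde{g}}$ and $S^{eq}_{\tilde{g}}$, and then to prove that intertwining directly at the moduli-space level via the action of $\tilde{g}$. Concretely, using $\Psi_{eq} \circ Q\mathcal{S} = P\mathcal{S} \circ \Psi$ on cohomology, I would first compute
$$
r_{eq} \circ Q\mathcal{S} \;=\; \Psi_{eq}^{-1} \circ \Phi_{eq,H_{-1},H_0} \circ S^{eq}_{\tilde{g}} \circ P\mathcal{S} \circ \Psi,
$$
$$
Q\mathcal{S} \circ r \;=\; \Psi_{eq}^{-1} \circ P\mathcal{S} \circ \Phi_{H_{-1},H_0} \circ S_{\tilde{g}} \circ \Psi \;=\; \Psi_{eq}^{-1} \circ \Phi_{eq,H_{-1},H_0} \circ P\mathcal{S} \circ S_{\tilde{g}} \circ \Psi,
$$
where the last step applies Lemma \ref{lemma:pairofpantsandcontin}. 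Hence the lemma is equivalent to the single identity
$$
S^{eq}_{\tilde{g}} \circ P\mathcal{S} \;=\; P\mathcal{S} \circ S_{\tilde{g}} \;:\; HF^{*}(H_0) \longrightarrow HF^{*+4}_{eq}(2 \cdot H_{-1}).
$$

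To establish this identity I would exhibit a bijection on moduli spaces implemented by $\tilde{g}^{-1}$. Given a rigid pair $(w,u)$ contributing to the coefficient of $y$ in $P\mathcal{S}(x)$, with $u : S \to M$ solving Equation \eqref{equation:eqPOP} for $(H_0, J^w_z)$ and asymptoting to $x$ at each right-hand end and to $y$ at the left-hand end, define $\tilde{u}(z) := g_{t(z)}^{-1} u(z)$, extending $t$ smoothly from the cylindrical ends into the interior of $S$ via a trivialisation of the appropriate $S^1$-bundle. Equations \eqref{equation:HwrappingG} and \eqref{equation:JwrappingG} then show that $(w,\tilde{u})$ is a rigid solution for the wrapped data $(g^*H_0, (g^*J)^w_z) = (H_{-1}, (g^*J)^w_z)$, with asymptotics $\tilde{g}^{-1} x$ and $\tilde{g}^{-1} y$. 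Since $g_t$ is $1$-periodic in $t$, the action $y(t) \mapsto g_t^{-1} y(t)$ on $2$-periodic loops commutes with the shift involution $y(t) \mapsto y(t+1)$ of Section \ref{subsec:equivHF}, so the bijection $(w,u) \mapsto (w,\tilde{u})$ respects the $\mathbb{Z}/2$-structure used by the equivariant count and carries the $S^{\infty}$-parameter $w$ identically, yielding the chain-level identity $S^{eq}_{\tilde{g}} \circ \mathcal{P} \circ \eta = \mathcal{P} \circ \eta \circ S_{\tilde{g}}$, and on passing to cohomology the desired equality of the associated maps $P\mathcal{S}$.

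The main point of care is checking that the pullback construction remains compatible with all of the conditions placed on the auxiliary data. One must verify that $(g^*J_{eq,v})_t$ continues to satisfy conditions (1)--(3) imposed on $J_{eq,v}$ in Section \ref{subsec:equivHF} (namely $(g^*J_{eq,-v})_t = \rho_*(g^*J_{eq,v})_t$, constancy in $v$ near $v^{i,+}$, and $\tau$-invariance), and similarly for the left/right choices in Section \ref{subsec:eqpop} that enter the definition of $P\mathcal{S}$; each of these reduces to a direct check once one observes that $g_t$ depends only on $t$ and commutes with the relevant involutions. Regularity of the wrapped data is exactly the content of \cite[Theorem 18]{ritterFTNLB}. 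Beyond that, the argument is formal, since $\tilde{g}^{-1}$ acts pointwise on the target and the Maslov shifts by $2$ and $4$ already built into the definitions of $S_{\tilde{g}}$ and $S^{eq}_{\tilde{g}}$ make the gradings balance. Combining this intertwining with the reduction above gives $r_{eq} \circ Q\mathcal{S} = Q\mathcal{S} \circ r$, completing the proof.
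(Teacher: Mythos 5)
Your proposal is correct and follows essentially the same route as the paper: conjugate by $\Psi$, $\Psi_{eq}$ via Theorem \ref{thm:qsseqpopintertwine}, commute $P\mathcal{S}$ past the continuation map via Lemma \ref{lemma:pairofpantsandcontin}, and reduce to the intertwining $S^{eq}_{\tilde g}\circ P\mathcal{S}=P\mathcal{S}\circ S_{\tilde g}$, which is then checked by the pointwise moduli-space bijection $u\mapsto g^{-1}_{t}u$. The only stylistic difference is that the paper writes this bijection cleanly as $(g^*u)(z)=g^{-1}_{\pi_2 z}u(z)$ using the $1$-periodicity of $g_t$ and the projection $\pi_2:S\to\mathbb{R}/\mathbb{Z}$ of the branched cover, which makes the global well-definedness of the $t$-coordinate immediate; your appeal to ``a trivialisation of the appropriate $S^1$-bundle'' is unnecessary since $\pi_2$ is already globally defined on $S$.
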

\begin{proof}
\begin{equation}\begin{array}{rcl}
Q \mathcal{S} \circ r & = & Q \mathcal{S} \circ \Psi^{-1} \circ \Phi_{H_{-1},H_0} \circ S_{\tilde{g}} \circ \Psi \\

 &=& \Psi_{eq}^{-1} \circ P \mathcal{S} \circ \Phi_{H_{-1},H_0} \circ S_{\tilde{g}} \circ \Psi \\

 &=& \Psi_{eq}^{-1} \circ \Phi_{eq, H_{-1},H_0} \circ P \mathcal{S} \circ S_{\tilde{g}} \circ \Psi \\
\end{array}\end{equation}
using respectively Theorem \ref{thm:qsseqpopintertwine} and Lemma \ref{lemma:eqcontinmaps}. To see that $$P \mathcal{S} \circ S_{\tilde{g}} = S^{eq}_{\tilde{g}} \circ P \mathcal{S},$$ observe that if we count the coefficient of $y$ in $S^{eq}_{\tilde{g}} \circ P \mathcal{S}(x)$ then we count $u: S \rightarrow M$ satisfying \eqref{equation:eqPOP} except that we replace the conditions $y(t)$ and $y(t+1)$ by $g_t \cdot y(t)$ and $g_t \cdot y(t+1)$ respectively. Similarly for $x(t)$ being replaced by $g_t x(t)$ in \eqref{equation:eqPOP} for $P \mathcal{S} \circ S_{\tilde{g}}(x)$.

Let $(g^* u)(z) = g^{-1}_{\pi_2 z} u(z)$ , where the $2$ to $1$ branched cover $S \rightarrow \mathbb{R} \times \mathbb{R}/\mathbb{Z}$ uses the projection map $\pi = \pi_1 \times \pi_2$. Then there is a bijective correspondence between pairs $(w,u)$ that one counts for the coefficient of $y$ in $S^{eq}_{\tilde{g}} \circ P \mathcal{S}(x)$ and solutions $(w, g^* u)$ counted for the coefficient of $y$ in $P \mathcal{S} \circ S_{\tilde{g}}(x)$. Here, if we use the data $(H, J^w_z, Y)$ for $S^{eq}_{\tilde{g}} \circ P \mathcal{S}$ then we use the data $(g^* H, g^* J^w_z, g^* Y)$ for $P \mathcal{S} \circ S^{eq}_{\tilde{g}}$, where $$g^* J^w_z := d(g_{\pi_2 z})^{-1} \circ J^w_z \circ dg_{\pi_2 z}$$ satisfies the relevant conditions in Section \ref{subsec:eqpop}.
\end{proof}

\begin{rmk}
The Maslov index above uses $g^2$, because $g_t$ wraps twice for $t \in \mathbb{R}/2\mathbb{Z}$. We recall that in the space of based loops in a Lie group, the product induced by the group action is homotopic to the product induced by composition of loops.
\end{rmk}

\begin{rmk}
By Lemma \ref{lemma:finalthm0} above, we see that $$Q \mathcal{S} (\ker r^k) \subset \ker r^k_{eq}.$$ This gives another proof that \eqref{equation:rqscommute} holds.
\end{rmk}

\subsection{The symplectic square for $M = \text{Tot}(\mathcal{O}(-1) \rightarrow \mathbb{CP}^m)$}
Observe first that $M = \text{Tot}(\mathcal{O}(-1) \rightarrow \mathbb{CP}^m)$ deformation retracts onto $\mathbb{CP}^m$, hence they have the same cohomology. Therefore, all that is strictly different between the quantum cohomologies of $M$ and $\mathbb{CP}^m$ is the interaction with $J$-holomorphic spheres. Recall that $|T| = 2m$, where $T$ is the quantum variable.

We state but do not reprove from \cite[Theorem 61]{ritterFTNLB} that \begin{equation} \label{equation:qhstarm} QH^*(M) = \Lambda[\omega_Q] / (\omega_Q^{m+1} + T \cdot \omega_Q), \end{equation} where $\omega_Q$ is the symplectic form on $\mathbb{CP}^m$. We will distinguish by $\omega^i$ and $\omega_Q^i$ taking $\omega \cup ... \cup \omega$ and $\omega * ... * \omega$, respectively the $i$-th power for the cup and the quantum cup products. Hence in this notation $\omega = \omega^1 = \omega_Q^1$. 

We will give a way to iteratively compute $Q \mathcal{S}$, in the same way as was used in \cite[Section 6]{me}. We refer the reader to the aforementioned section for more details. We use the quantum Cartan relation from \cite[Theorem 1.2]{me} to show that $$Q \mathcal{S}(\omega_Q^{i+1}) = Q \mathcal{S}(\omega_Q^{i}) * Q \mathcal{S}(\omega_Q) + \sum_{j,k} q_{j,k}(W_0 \times D^{j-2,+})(\omega_Q^{i},\omega_Q)h^{j}.$$ Using that $Q \mathcal{S}(\omega_Q) = \omega_Q * \omega_Q + \omega_Q h^2$ (which follows from a combination of basic properties of the Quantum Steenrod square), it is sufficient to calculate $q_{j,k}(W_0 \times D^{j-2,+})(\omega_Q^{i},\omega_Q)$ for each $i,j,k$ such that $0 \le i \le m$, and use the quantum Cartan relation.

In \cite[Lemma 5.6]{me}, it was proved that $q_{j,0} = 0$. For degree reasons $q_{j,k} = 0$ when both $k > 1$ and $i < m$. Hence, for $i<m$ we only need to consider $k=1$, i.e. a degree $1$ $J$-holomorphic sphere. This corresponds to setups of the form of Figure \ref{fig:totcpnquant} (see \cite[Section 5]{me} for justification). In the figure, the intersection conditions with ``$PD(\omega_Q^i)_v$" is shorthand for the conditions of evaluating the holomorphic sphere at some chosen pseudocycle representatives of $PD(\omega_Q^i)_v$ as in Definition \ref{defn:eqpopqss}. One can show that because $\text{codim} \ PD(\omega_Q) = 2$ which is the dimension of the $J$-holomorphic sphere, the intersection conditions with $\omega_Q$ are unnecessary except for fixing marked points. Thus, the calculation of the coefficient of $\omega_Q^l \cdot T$ in $q_{j,1}(W_0 \times D^{j-2,+})(\omega_Q^{i},\omega_Q)$ is counting how many degree $1$ $J$-holomorphic spheres there are intersecting some generic pseudocycle representatives of $PD(Sq^j(\omega_Q^i)) \in H_*(D, \partial D)$ and $PD(\omega_Q^l)^{\vee} \in H_*(D)$, where $\vee$ is the intersection duality. For degree reasons there can only be solutions for $(j,l) = (2(m-i),2), \ (2(m-1-i),0)$. By moving the $\mathbb{CP}^0$ (i.e. the point representing $PD(\omega_Q^l)^{\vee}$) in the $l=0$ case to infinity (see \cite[Theorem 61]{ritterFTNLB}) we see the only possibly non-zero term occurs for $(j,l) = (2(m-i),2)$. Hence \begin{equation} \label{equation:qssfortot} \sum_{j,k} q_{j,k}(W_0 \times D^{j-2,+})(\omega_Q^{i},\omega_Q)h^{j} = {{i}\choose{m-i}} \omega_Q T h^{2+4i-2m}, \end{equation} where ${{i}\choose{m-i}}$ is the coefficient of $\omega_Q^m$ in $Sq^{m-i}(\omega_Q^i)$ (the classical Steenrod square is a homotopy invariant, and we know the answer for $\mathbb{CP}^m$). For $i=m$, all of the previous holds for $k = 1$, but there is also the possibility of $k=2$. If $k=2$ then $j=2$, and recall that $j=2$ corresponds to using $D^{0,+}$ (a point) as the parameter space. Hence this is the $T^2$ term in the standard quantum product $\omega_{Q}^{m}*\omega_{Q}^m*\omega_{Q} h^2$ (it is not $\omega_{Q}^{m}*\omega_{Q}^m*\omega_{Q}*\omega_{Q}$, because the final intersection of the holomorphic sphere with the hypersurface $PD(\omega_{Q}^1)$ fixes the marked point associated to varying our domains in $W_0$), giving that $$\sum_{j,k} q_{j,k}(W_0 \times D^{j-2,+})(\omega_Q^{m},\omega_Q)h^{j} = \omega_Q T h^{2+2m} + \omega_{Q}T^2 h^2.$$ These can be used to calculate all of the quantum Steenrod squares for any $m$. 

\begin{exmpl}
\label{exmpl:PSexmpl}
We let $x = \omega_Q$, and $M = \text{Tot}(\mathcal{O}(-1) \rightarrow \mathbb{CP}^4)$.

$$\begin{array}{ll} Q \mathcal{S}(x) & = x^2 + x h^2, \\
 Q \mathcal{S}(x^2) &= Q \mathcal{S}(x) * Q \mathcal{S}(x) +  {{1}\choose{3}} x T h^{-2} \\ 
& = (x^2 + xh^2)^2 = x^4 + x^2 h^4, \\
 Q \mathcal{S}(x^3) &= Q \mathcal{S}(x^2) * Q \mathcal{S}(x) +  {{2}\choose{2}} x T h^{2} \\ 
& = (x^2 + xh^2)*(x^4 + x^2 h^4) + xT h^2 \\ & =  x^2 T + x^4 h^4 + x^3 h^6, \\
 Q \mathcal{S}(x^4) &= Q \mathcal{S}(x^3) * Q \mathcal{S}(x)+  xTh^6 \\ 
&=  x^4 T +  x^3 T h^2 +  x^2 T h^4 +  x T h^6 + x^4 h^8, \\
Q \mathcal{S}(x^5) &= Q \mathcal{S}(x^4) * Q \mathcal{S}(x) + x T h^{10} + x T^2 h^2 \\ 
&= (x ^2 + x h^2) T^2 = Q \mathcal{S}(xT),\end{array}$$

remembering that in this case there is the extra correction term $x T^2 h^2$.
\end{exmpl}

		\begin{figure}
			\input{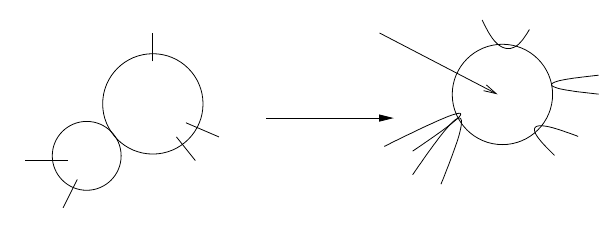_t}
			\caption{Setups for the correction term in calculating $Q \mathcal{S}$ for $M$.}
			\label{fig:totcpnquant}
		\end{figure}

For this choice of $M$, the linear map $r$ is multiplication by the symplectic form $x$ (see e.g. \cite[Lemma 60]{ritterFTNLB}). In general it is multiplication by the Seidel element. Then $\ker r = x^n + T$, and $r |_{\text{Im} \ r}: \text{Im} \ r \rightarrow \text{Im} \ r^2$ is an isomorphism. We denote \begin{equation} \label{equation:ridegrees} r_{eq} = \sum_{i \ge 0} h^{2i} r_i,\end{equation} where $r_i: QH^*(M) \rightarrow QH^{*+4-2i}(M)$. Observe that $r_0$ uses constant flowlines on $S^{\infty}$, hence $r_0 = r^2$ where $r$ is as in Equation \eqref{equation:nonequr}.

We note that:
\begin{itemize}
\item we have calculated the correction terms of the quantum Cartan relation above, 
\item we can show that $Q \mathcal{S}(x^m) = x^m h^{2m} + \ldots (\text{lower order in h})$, because $Q \mathcal{S}(x) = x h^2 + x^2$, then we use the quantum Cartan relation and induction to show \begin{equation} \label{equation:firsttermqsxi} Q \mathcal{S}(x^i) = x^i h^{2i} + \ldots (\text{lower order in h}), \end{equation} for each $i = 0,...,m$.
\item from Equation \eqref{equation:qhstarm}, we know that $x^{m+1} = x T$ 
\item hence $Q \mathcal{S}(x^{m+1}) = Q \mathcal{S}(xT) = (x^2 + xh^2)T^2$, 
\end{itemize}
we can deduce, using the quantum Cartan relation, the points above and dividing $Q \mathcal{S}(x^{m+1}) + x Th^{2m+2} + x T^2 h^2$ by $Q \mathcal{S}(x) = x^2 + x h^2$, that \begin{equation} \label{equation:qsxm} Q \mathcal{S}(x^m) = x^m h^{2m} + T \left( \sum_{i=1}^m x^i h^{2m - 2i} \right). \end{equation}

\begin{proof}[Proof of Theorem \ref{thm:finalthm}]
We must prove two lemmas.

\begin{lemma}
\label{lemma:finallemma1}
$$r_{eq} |_{\text{Im} \ r_{eq}}: \text{Im} \ r_{eq} \rightarrow \text{Im} \ r_{eq}^{2}$$ is an isomorphism. 
\end{lemma}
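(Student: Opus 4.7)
The plan is to work over the local ring $R = \Lambda[[h]]$ (a DVR with uniformizer $h$), treating $QH^*_{eq}(M) = QH^*(M) \otimes_\Lambda R$ as a free $R$-module of rank $m+1$ with $r_{eq} \equiv r_0 = r^2 \pmod{h}$. Using $QH^*(M) = \Lambda[x]/x(x^m+T)$, a direct computation shows $\ker r = \Lambda(x^m+T)$ and $\text{Im}\,r = \text{span}_{\Lambda}(x, x^2, \ldots, x^m)$ have zero intersection (the first has nonzero constant component coming from $T$, the second has none), and moreover $\text{Im}\,r^2 = \text{Im}\,r$. Hence $r_0|_{\text{Im}\,r_0}$ is already an isomorphism, and the goal is to lift this through Nakayama-type arguments to conclude the same for $r_{eq}|_{\text{Im}\,r_{eq}}$.

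First I identify $\ker r_{eq}$ as the free rank-one $R$-module generated by $\kappa := Q\mathcal{S}(x^m+T)$. By Lemma \ref{lemma:finalthm0}, $r_{eq}(\kappa) = Q\mathcal{S}(r(x^m+T)) = Q\mathcal{S}(0) = 0$. Using $Q\mathcal{S}(x^m) = x^m h^{2m} + T\sum_{i=1}^m x^i h^{2m-2i}$ together with $Q\mathcal{S}(T) = T^2$, the $h^0$-coefficient of $\kappa$ equals $T x^m + T^2 = T(x^m+T)$, which generates $\ker r_0$. Torsion-freeness of $QH^*_{eq}(M)$ over $R$ forces $\ker r_{eq} \cap h\,QH^*_{eq}(M) = h\ker r_{eq}$, so the reduction $\ker r_{eq}/h\ker r_{eq} \hookrightarrow \ker r_0$ is injective; surjectivity via $\kappa$ and Nakayama then yield $\ker r_{eq} = R \cdot \kappa$. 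Next I establish that $\text{Im}\,r_{eq}$ is $R$-free of rank $m$ and that $\text{Im}\,r_{eq} \cap h^k\,QH^*_{eq}(M) = h^k \text{Im}\,r_{eq}$: lifting any $r_0(\bar{w}) \in \text{Im}\,r_0$ to $r_{eq}(w)$ shows that the reduction $\text{Im}\,r_{eq}/h\,\text{Im}\,r_{eq}$ surjects onto $\text{Im}\,r_0$, and the exact sequence $0 \to \ker r_{eq} \to QH^*_{eq}(M) \to \text{Im}\,r_{eq} \to 0$ then forces matching ranks, injectivity of the reduction, freeness of $\text{Im}\,r_{eq}$, and by iteration the filtration compatibility for all $k \ge 1$.

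The decisive step is $\ker r_{eq} \cap \text{Im}\,r_{eq} = 0$, from which the lemma follows immediately (surjectivity onto $\text{Im}\,r_{eq}^2$ being automatic). Suppose $\mu\kappa \in \text{Im}\,r_{eq}$ for some $\mu \in R$. Reducing mod $h$ gives $\bar{\mu}\cdot T(x^m+T) \in \text{Im}\,r_0$; since $x^m+T$ has a nonzero $1$-component while $\text{Im}\,r_0$ has none, $\bar\mu = 0$ and thus $\mu \in hR$. Writing $\mu = h^k \mu_0$ with $\mu_0$ a unit and using $\text{Im}\,r_{eq} \cap h^k\,QH^*_{eq}(M) = h^k \text{Im}\,r_{eq}$ from step two lets me divide through by $h^k$ to obtain $\mu_0\kappa \in \text{Im}\,r_{eq}$, after which the same mod-$h$ argument iterates to give $\mu \in \bigcap_k h^k R = 0$. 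The principal obstacle is really the first step: the entire argument hinges on verifying that the leading $h^0$-term of $\kappa$ is a nonzero element of $\ker r_0$, which in turn requires the explicit computation of $Q\mathcal{S}(x^m)$ derived iteratively via the quantum Cartan relation together with the extension rule $Q\mathcal{S}(T) = T^2$.
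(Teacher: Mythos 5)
Your proof is correct but follows a genuinely different route from the paper's. The paper proceeds by an explicit case analysis: it shows, via degree arguments and the $h$-coefficients of identities like $r_{eq}(Q\mathcal{S}(x^m+T))=0$ and $r_{eq}(Q\mathcal{S}(x^i))=Q\mathcal{S}(x^{i+1})$, that $r_i(QH^*(M))\subset \operatorname{Im} r$ for every $i$ (this amounts to checking five cases: $r_0(x^{m-2})$, $r_2(1)$, $r_{i+2}(x^i)$ for $1\le i\le m$, $r_2(x^m)$, $r_1(x^{m-1})$), and then constructs an inverse $a_{eq}=a^2+h^2 a^2 r_1 a^2+\cdots$ order by order in $h$ from the nonequivariant inverse $a$. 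You instead run a commutative-algebra argument over the DVR $R=\Lambda[[h]]$: identify $\ker r_{eq}$ as the free rank-one module generated by $\kappa=Q\mathcal{S}(x^m+T)$ via Nakayama (using that the $h^0$-term of $\kappa$ is $T(x^m+T)$, a nonzero element of $\ker r_0$), derive freeness and $h$-saturation of $\operatorname{Im} r_{eq}$ from the short exact sequence and torsion-freeness, and finally use the fact that $T(x^m+T)$ has a nonzero constant ($x^0$) component while $\operatorname{Im} r_0$ does not to conclude $\ker r_{eq}\cap\operatorname{Im} r_{eq}=0$, giving injectivity of $r_{eq}|_{\operatorname{Im} r_{eq}}$ (surjectivity onto $\operatorname{Im} r_{eq}^2$ being automatic). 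Both proofs rest on the same explicit input, namely Equation \eqref{equation:qsxm} for $Q\mathcal{S}(x^m)$ together with $Q\mathcal{S}(T)=T^2$. What the paper's approach buys is the stronger structural statement $\operatorname{Im} r_{eq}\subset (\operatorname{Im} r)\otimes_\Lambda R$ and a concrete left inverse $a_{eq}$; what your approach buys is robustness and brevity, since it completely sidesteps the case-by-case verification that each $r_i$ lands in $\operatorname{Im} r$. One small stylistic remark: in your final paragraph the iteration "$\mu\in\bigcap_k h^kR=0$" can be shortened — after dividing by $h^k$ so that $\mu_0$ is a unit, a single further application of the mod-$h$ argument already yields a contradiction.
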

\begin{proof}
The $\Lambda$ vector space $QH^*(M)$ is generated by $\{ x, \ldots , x^m, x^m + T \}$. The additive group $\text{Im }r = r(QH^*(M)) = x* QH^*(M)$ is generated by $\{ x, \ldots , x^m \}$, and $x^m + T$ generates $K = \ker r$. We will prove that $r_i(QH^*(M)) \subset \text{Im } r$ for all $i$. Observe that for $x \in H^*(M)$, we have $r_i(x) = \sum_{j \ge 0} r_{i,j}(x) T^j$, where $r_{i,j}(x) \in H^{*+4-2i - 2jN}(M)$. As the cohomology is bounded below by degree $0$ we see that $r_i = 0$ for $2i > \dim M + 4$, hence only a finite number of $r_i$ are nonzero.

There are five classes of cases to check (all other $r_i$ must land in $\text{Im} \ r$ for degree reasons: specifically, $r_i (x^n)$ may land in $\text{Im} \ r$ only if $|r_i(x^n)|$ is divisible by $m$. These cases are:
\begin{enumerate}
\item $r_0 (x^{m-2})$,
\item $r_2(1)$,
\item $r_{i+2}(x^i)$ for $i=1,...,m$,
\item $r_2(x^m)$,
\item $r_1(x^{m-1})$.
\end{enumerate}

Case $(1)$ is immediate, as $r_0 = r^2$. For Case $(2)$, Lemma \ref{lemma:finalthm0} implies that $$r_{eq}(1) = r_{eq}(Q \mathcal{S}(1)) = Q \mathcal{S}(r(1)) = Q \mathcal{S}(x) = x^2 + xh^2.$$ Hence $r_2(1) = 0$. For Case $(3)$, we use downwards induction. For the base case, Lemma \ref{lemma:finalthm0} implies that \begin{equation} \label{equation:vanishinQSK} r_{eq}(Q \mathcal{S}(x^m + T)) = Q \mathcal{S}(r(x^m+T)) = 0. \end{equation} Using Equation \eqref{equation:qsxm}, the $h^{4m+4}$ term of the left hand side of Equation \eqref{equation:vanishinQSK} is $r_{m+2}(x^m)$. The $h^{4m+4}$ term of the right hand side is $0$, hence $$r_{m+2}(x^m) = 0.$$ For the induction step we proceed similarly. Lemma \ref{lemma:finalthm0} implies that \begin{equation} \label{equation:inductivestuff} r_{eq}(Q \mathcal{S}(x^i)) = Q \mathcal{S}(x^{i+1}). \end{equation} We rewrite Equation \eqref{equation:firsttermqsxi} as: \begin{equation} \label{equation:somecoeffsfinal} Q \mathcal{S}(x^i) = x^i h^{2i} + \sum_{j=1}^{m-i} \epsilon_j x^{i+j} h^{2i-2j} + \delta T h^{4i-2m} + \sum_{j \ge 1} \eta_j T x^j h^{4i-2m-2j},\end{equation} for some $\epsilon_j, \delta, \eta_j \in \mathbb{Z}/2$. Hence the $h^{4i+4}$ term on the left hand side is \begin{equation} \label{equation:qsxi} r_{i+2}(x^i) + \sum_{j=1}^{m-i} \epsilon_j r_{i+ j+2}(x^{i+j}) + \delta T r_{m+2}(1) + \sum_{j \ge 1} T r_{m+2+j}(x^j).\end{equation} Using Equation \eqref{equation:ridegrees}, as $m \ge 1$, for degree reasons $r_{m+2+j}(x^j) = 0$ for all $j \ge 0$. By the induction hypothesis, we assume that $r_{i+ j+2}(x^{i+j})  = 0$ for all $1 \le j \le m-i$. Hence the expression in \eqref{equation:qsxi} reduces to $r_{i+2}(x^i)$. We also note that the $h^{4i+4}$ term on the right hand side of Equation \eqref{equation:inductivestuff} is $0$ using Equation \eqref{equation:somecoeffsfinal}. Hence by induction $r_{i+2}(x^i) = 0$.

For Case $(4)$, using Equation \eqref{equation:qsxm} to expand the $h^{2m+4}$ term of \eqref{equation:vanishinQSK} implies that \begin{equation} \label{equation:vanishinr2xm} r_2(x^m)  + T \sum_{i = 1}^m r_{i+2}(x^i) + T^2 r_{m+2}(1)= 0. \end{equation} We know from the previous cases that all of the terms except $r_2(x^m)$ on the left hand side of \eqref{equation:vanishinr2xm} vanish, hence $r_2(x^m) = 0$.

For the final case, consideration of the $h^4$ term of \eqref{equation:vanishinQSK} implies that $$T( r_2(x^m + T) + r_1(x^{m-1}) + r_0(x^{m-2}) )= 0.$$ We know that $r_2(x^m+T) = 0$ from cases $(2)$ and $(4)$, hence $$r_1(x^{m-1}) = r_0(x^{m-2}) = r^2(x^{m-2}) = x^m.$$

Knowing that $r |_{\text{Im} \ r} : \text{Im} \ r \xrightarrow{\cong} \text{Im} \ r^2 \subset \text{Im} \ r$ is an isomorphism, there exists $a:  \text{Im} \ r^2 = \text{Im} \ r \rightarrow \text{Im} \ r$ such that $a r^2 = r$. Use an argument as in Lemma \ref{lemma:classicalpowerserieslemma} to iteratively construct some $a_{eq}: r(QH^*(M))[[h]] \rightarrow r(QH^*(M))[[h]]$, of the form $$a_{eq} = a^2 + h^2 a^2 r_1 a^2 + \sum_{i \ge 2} h^{2i} a_i,$$ such that the $$a_i: \text{Im} \ r^2 = \text{Im} \ r \rightarrow \text{Im} \ r$$ are written as compositions of $a$ and the $r_i$, and $a_{eq} r^2_{eq} = r_{eq}$. We required that $r_i(QH^*(M)) \subset r(QH^*(M)) = r^2(QH^*(M))$ for all $i$ so that $a_{eq}$ is well defined (specifically, it ensures that $a^k r^k r_i = r_i$).
\end{proof}

\begin{lemma}
\label{lemma:finallemma2}
$$\ker r_{eq} = Q \mathcal{S}(x^n + T) \cdot \Lambda[[h]].$$
\end{lemma}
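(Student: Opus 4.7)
The inclusion $Q\mathcal{S}(x^m+T)\cdot\Lambda[[h]]\subseteq\ker r_{eq}$ is immediate: by Equation~\eqref{equation:qhstarm} we have $r(x^m+T)=x^{m+1}+xT=xT+xT=0$ in $QH^*(M)$, so Lemma~\ref{lemma:finalthm0} gives $r_{eq}(Q\mathcal{S}(x^m+T))=Q\mathcal{S}(r(x^m+T))=0$. Combined with the $\Lambda[[h]]$-linearity of $r_{eq}$ (which is built in from the $\Lambda$-linearity of $\Psi_{eq}$, $\Phi_{eq,H_{-1},H_0}$ and $S^{eq}_{\tilde g}$, and the tautological $h$-linearity), this yields the inclusion.

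For the reverse inclusion, the key preliminary computation is the $h^0$-coefficient of $Q\mathcal{S}(x^m+T)$. Additivity of $Q\mathcal{S}$ in characteristic $2$ together with the extension rule $Q\mathcal{S}(T)=T^2$ gives $Q\mathcal{S}(x^m+T)=Q\mathcal{S}(x^m)+T^2$; combined with Equation~\eqref{equation:qsxm}, the $h^0$-coefficient equals $Tx^m+T^2=T(x^m+T)$, an \emph{invertible} $\Lambda$-multiple of the generator of $\ker r$.

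Given $a=\sum_{i\ge0}a_ih^i\in\ker r_{eq}$, the plan is to construct $\mu=\sum_{i\ge0}\mu_ih^i\in\Lambda[[h]]$ with $a=Q\mathcal{S}(x^m+T)\cdot\mu$ by induction on the $h$-degree. Suppose $\mu_0,\ldots,\mu_{k-1}\in\Lambda$ have been chosen so that the remainder
\[
a^{(k)}:=a-Q\mathcal{S}(x^m+T)\cdot\textstyle\sum_{i<k}\mu_ih^i=h^kb_k+O(h^{k+1})
\]
vanishes to order $h^k$. By the forward inclusion and $\Lambda[[h]]$-linearity, $a^{(k)}\in\ker r_{eq}$; reading off the $h^k$-coefficient of $r_{eq}(a^{(k)})=0$ gives $r_0(b_k)=r^2(b_k)=0$, so $b_k\in\ker r^2=\ker r=(x^m+T)\Lambda$ (equality $\ker r^2=\ker r$ follows from the isomorphism $r|_{\mathrm{Im}\,r}:\mathrm{Im}\,r\xrightarrow{\cong}\mathrm{Im}\,r^2$ established in Lemma~\ref{lemma:finallemma1}). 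Writing $b_k=\beta_k(x^m+T)$ and setting $\mu_k:=\beta_k/T\in\Lambda$, the $h^k$-coefficient of $Q\mathcal{S}(x^m+T)\cdot\mu_kh^k$ equals $T(x^m+T)\cdot(\beta_k/T)=b_k$, so $a^{(k+1)}$ vanishes to order $h^{k+1}$, closing the induction.

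The main subtlety is precisely the leading-coefficient calculation: what makes the induction run is that the $h^0$-coefficient of $Q\mathcal{S}(x^m+T)$ is an \emph{invertible} scalar multiple of the kernel generator $x^m+T$. Had this coefficient been a non-unit multiple of $x^m+T$, or had it lain outside $(x^m+T)\Lambda$, one could not always solve for $\mu_k\in\Lambda$ and the argument would fail at some finite stage.
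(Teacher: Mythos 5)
Your proposal is correct and follows essentially the same route as the paper: induct on the $h$-degree, use $r_0 = r^2$ and $\ker r^2 = (x^m+T)\Lambda$ to write the leading coefficient as $\beta_k(x^m+T)$, then subtract off $T^{-1}\beta_k h^k Q\mathcal{S}(x^m+T)$ using that the $h^0$-coefficient of $Q\mathcal{S}(x^m+T)$ is $T(x^m+T)$. The only difference is that you make explicit the forward inclusion and the leading-coefficient computation, which the paper's proof uses implicitly via the factor $T^{-1}$ in its iteration.
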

\begin{proof}
Suppose $x= \sum_{i \ge 0} x_i h^i \neq 0$ is some element of $\ker r_{eq}$. Let $j$ be minimal such that $x_j \neq 0$. Observe that $r_{eq}(x) = 0$, which implies that $r^{2}(x_j) = 0$, i.e. $x_j \in \ker r^2 = \langle x^n + T \rangle$. Hence $$x_j = \lambda_j \cdot ( x^n + T),$$ for $\lambda_j \in \Lambda$. Replace $x$ by $x- h^j \lambda_j T^{-1}Q \mathcal{S}(x^n + T)$ and note that the coefficient of $h^i$ in $x- h^j \lambda_j T^{-1}Q \mathcal{S}(x^n + T)$ is zero for $i \le j$. We then iterate. This yields that $$x =\left( T^{-1} \sum_{i\ge 0} \lambda_{i} h^i \right) Q \mathcal{S}(x^n + T),$$ for some $\lambda_i \in \Lambda$.
\end{proof}

Using an argument as in \cite[Section 4.2]{ritterFTNLB} and the diagram \ref{dirlimetc}, by Lemma \ref{lemma:finallemma1} we deduce that $$c^*_{eq}: QH^*_{eq}(M) / \ker r_{eq} \xrightarrow{\cong} SH^*_{eq}(M).$$ Then Theorem \ref{thm:finalthm} follows by Lemma \ref{lemma:finallemma2}. 
\end{proof}

We finish by calculating $P \mathcal{S}$ for the same case as Example \ref{exmpl:PSexmpl}, using Equation \eqref{equivFTNLB}, and observing by the work earlier in this subsection that we can do a similar calculation for all $\text{Tot}(\mathcal{O}(-1) \rightarrow \mathbb{CP}^n)$. Recall that in general $SH^*_{eq}(M)$ does not have an obvious ring structure, but in this case it inherits one from $QH^*_{eq}(M)$.

\begin{exmpl}
We let $x = \omega_Q$, and $M = \text{Tot}(\mathcal{O}(-1) \rightarrow \mathbb{CP}^4)$, using Theorem \ref{thm:finalthm}, we know that $$SH_{eq}^*(M) \cong \Lambda[x][[h]]/ Q \mathcal{S}(x^4+ T) \cdot \Lambda[[h]].$$ Then using Example \ref{exmpl:PSexmpl} we calculate:

$\begin{array}{ll} P \mathcal{S}(x) & = x^2 + x h^2  \end{array}, $ \\ 

$\begin{array}{ll} P \mathcal{S}(x^2) & = x^4 + x^2 h^4 \\ &= T + x^3 h^2 + x h^6 + {\displaystyle \sum_{i \ge 0}} h^{8i}T^{-i}(T + x^3 h^2 + x^2 h^4 +  xh^6), \end{array}$ \\ 

$\begin{array}{ll} P \mathcal{S}(x^3) & = x^2 T + x^4 h^4 + x^3 h^6 \\ &= x^2 T + Th^4 + x^2h^8 + xh^{10}+ {\displaystyle \sum_{i \ge 0}} h^{4+8i}T^{-i}(T + x^3 h^2 + x^2 h^4 +  xh^6),  \end{array}$ \\ 

$\begin{array}{ll}  P \mathcal{S}(x^4) &= x^4 T +  x^3 T h^2 +  x^2 T h^4 +  x T h^6 + x^4 h^8 \\ &= T^2 = P \mathcal{S}(T) , \end{array}$

\end{exmpl}

\begin{rmk}
Lemma \ref{lemma:finallemma2} works more generally for $$M = \text{Tot}(\mathcal{O}(-k) \rightarrow \mathbb{CP}^m),$$ when $k \le (m+1)/2$ is odd. That is because in this case, $$QH^*(M) \cong \Lambda [x] / (x^{m+1} + T x^k),$$ $r$ is quantum product with $x$ and $$(x^{m-k+1} + T) = \ker r^k.$$ For the given range of $k$, we have that $(x^{m-k+1} + T)^2 = T(x^{m-k+1} + T)$, hence it generates $\ker r^k$. That is the key idea in the proof of the lemma.

It should also be reasonable that Lemma \ref{lemma:finallemma1} works for the given range of $k$, although this has not been proven. More generally, there is no guarantee that either lemma holds more generally for the total space of a negative line bundle over a closed symplectic manifold, or even for odd $k> (m+1)/2$. For even $k$ the symplectic cohomology is trivial.
\end{rmk}
\bibliographystyle{plain}
\bibliography{biblio}

\end{document}